\documentclass[11pt]{amsart}
\usepackage[margin=1.1in]{geometry}
\usepackage{amsmath,amssymb,amsthm,mathtools}
\usepackage[dvipsnames]{xcolor}
\usepackage{hyperref}
\usepackage{booktabs}
\usepackage{enumitem}
\usepackage{longtable}
\hypersetup{colorlinks=true,linkcolor=blue!50!black,citecolor=green!40!black,urlcolor=blue!50!black}

\newtheorem{theorem}{Theorem}[section]
\newtheorem{proposition}[theorem]{Proposition}
\newtheorem{lemma}[theorem]{Lemma}
\newtheorem{corollary}[theorem]{Corollary}
\newtheorem*{maintheorem}{Main Theorem}
\theoremstyle{definition}
\newtheorem{definition}[theorem]{Definition}
\newtheorem{remark}[theorem]{Remark}

\newcommand{\Q}{\mathbb{Q}}
\newcommand{\Z}{\mathbb{Z}}
\newcommand{\F}{\mathbb{F}}
\newcommand{\PP}{\mathbb{P}}
\newcommand{\A}{\mathbb{A}}
\newcommand{\GL}{\mathrm{GL}}
\newcommand{\PGL}{\mathrm{PGL}}
\newcommand{\PSL}{\mathrm{PSL}}
\newcommand{\Gal}{\mathrm{Gal}}

\newcommand{\Res}{\operatorname{Res}}
\newcommand{\Tr}{\operatorname{Tr}}
\newcommand{\NS}{\operatorname{NS}}
\newcommand{\Sat}{\operatorname{Sat}}
\newcommand{\cO}{\mathcal{O}}
\newcommand{\cE}{\mathcal{E}}
\newcommand{\R}{\mathbb{R}}
\newcommand{\C}{\mathbb{C}}
\newcommand{\fp}{\mathfrak{p}}
\newcommand{\fq}{\mathfrak{q}}
\newcommand{\ns}{\mathrm{ns}}

\title{Complete classification of $7$-adic  Galois images  for non-CM elliptic curves over $\Q$}
\author{Nguyen Xuan Tho}
\address{Hanoi University of Science and Technology,
Hanoi Vietnam}
\email{tho.nguyenxuan1@hust.edu.vn}
\date{26 September 2026}

\begin{document}

\begin{abstract}
We solve Conjecture 1.6 of Furio and Lombardo [On $7$-adic Galois representations for elliptic curves over $\Q$, Proc.\ London Math.\ Soc.\ (3) \textbf{133} (2026), e70193.]. As a consequence, we complete  the classification of $7$-adic  Galois images  for non-CM elliptic curves over $\Q$. This is the first solved case in Problem 6.2 from Balakrishnan [\emph{Chabauty and beyond: Explicit p-adic methods for rational points}, Proceedings of the International Congress of Mathematicians 2026 - Volume 3: Invited Lectures (Sections 1-4), 321-341, 2026].
The proof is the result of a long AI-training period by the author from June 2026 to September 2026. The author has been working on this project alone without the help of any other.

\end{abstract}

\maketitle
\setcounter{tocdepth}{1}
\tableofcontents

\section{Introduction}

\subsection{The result}
Let $C\subset\PP^2_{\Q}$ be the smooth plane quartic
\begin{equation}\label{eq:C}
C:\ x^4+3x^3y-3x^2yz-3x^2z^2+6xy^3-6xy^2z+3xyz^2-2xz^3+4y^4+2y^3z-5yz^3=0 .
\end{equation}
We write $G(x,y,z)$ for the quartic form on the left. This is the curve $X_{E_3}(7)$ of Furio and Lombardo \cite[Corollary~5.7]{FL}, where $E_3$ is the elliptic curve with LMFDB label \texttt{392.c1}, $E_3:\ y^2=x^3-x^2-16x+29$, with $j(E_3)=2^8\cdot 7^2$. The curve $X_{E}(7)$ parametrises elliptic curves $E'$ together with a \emph{symplectic} isomorphism $E[7]\to E'[7]$ of Galois modules. It is a twist of the modular curve $X(7)$, which is the Klein quartic. The four points
\[
b=[0:0:1],\qquad P_2=[1:1:1],\qquad P_3=[2:0:1],\qquad P_4=[-1:0:1]
\]
lie on $C$. Our main result is the following.

\begin{maintheorem}[Theorem~\ref{thm:main}]
Assume that the Jacobian $J$ of $C$ satisfies $\operatorname{rk}J(\Q)\le 3$. (This is stated in \cite[Remark~6.3]{FL}, as the outcome of a fake $2$-Selmer group computation.) Then
\[
C(\Q)=\{[0:0:1],\ [1:1:1],\ [2:0:1],\ [-1:0:1]\}.
\]
In other words, Conjecture~1.6 of \cite{FL} is true.
\end{maintheorem}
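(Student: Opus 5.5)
Since the genus is $3$ and we only know $\operatorname{rk}J(\Q)\le 3$, classical Chabauty--Coleman is unavailable: the rank bound $r<g$ can fail, with $r=g=3$. My plan is to use an unconditional-in-rank method that works when $r\le g$: either quadratic Chabauty, or, more robustly, a covering argument combined with elliptic Chabauty. I would first use the known points $b,P_2,P_3,P_4$ to produce classes $P_i-b\in J(\Q)$, $i=2,3,4$. I would then check, by reduction modulo several good primes and computing the images in $J(\F_p)$, that these three classes are independent. Together with the assumed bound this gives rank exactly $3$ and a finite-index subgroup $\Lambda\subset J(\Q)$ generated by explicit divisors. I would then saturate $\Lambda$ at the small primes dividing the relevant indices using standard reductions, or else keep track of the index and handle it in a Mordell--Weil sieve.

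The main step is to cut $C(\Q)$ down to finitely many residue discs with explicit content. I would try the following first. The curve is a twist of the Klein quartic, so its automorphisms over $\overline{\Q}$ form $\PSL_2(\F_7)$. I would look for a Galois-stable subgroup whose quotient gives, over $\Q$ or a small number field $K$, an étale double (or unramified abelian) cover $D\to C$. Its twists $D_\delta$ cover $C(\Q)$, with $\delta$ running over a finite set controlled by a Selmer-type computation. Each $D_\delta$ would map to an elliptic curve (or genus-$2$ curve) $E_\delta$ over $K$ with $\operatorname{rk}E_\delta(K)<[K:\Q]$. Elliptic Chabauty then determines the points of $D_\delta$ whose image on $E_\delta$ has $\PP^1(\Q)$-rational $x$-coordinate. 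If this is not available, the fallback is geometric quadratic Chabauty (the Edixhoven--Lido approach) for rank $r=g$, using $\rho(J)=\operatorname{rk}\NS(J)>1$. The Néron--Severi rank is large here because $J$ is isogenous over $\overline{\Q}$ to $E^3$ for a CM-type factor structure of $X(7)$, so the hypothesis $r<g+\rho-1$ holds comfortably.

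The final step is a Mordell--Weil sieve at several primes $p$ of good reduction ($p\ne 2,3,7$ and primes of bad reduction of $G$). It eliminates the remaining residue classes and shows that each surviving residue disc contains exactly one rational point, namely one of the four known points. This is certified by the appropriate $p$-adic power series having a simple zero (Strassmann's theorem). I expect the main obstacle to be the second step: either finding a suitable cover with a rank-deficient elliptic quotient over the smallest possible field, or carrying out quadratic Chabauty with a computable Néron--Severi class trace-zero correspondence on this twisted Klein quartic, where all the $p$-adic heights must be computed explicitly.
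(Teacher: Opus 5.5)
There is a genuine gap at your second step, which is where the whole difficulty lies, and both concrete routes you propose for it are blocked for this curve.

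\textbf{The quadratic Chabauty fallback.} It rests on the claim that $\rho$ is large because $J_{\overline{\Q}}$ is isogenous to $E_0^3$. But the condition $r<g+\rho-1$ for quadratic Chabauty over $\Q$ (Balakrishnan--Dogra, or Edixhoven--Lido) uses the N\'eron--Severi rank of $J$ \emph{over $\Q$}, not the geometric rank $9$. Here $\operatorname{End}_{\Q}(J)=\Z$: the endomorphism $\sqrt{-7}$ and the extra Hermitian classes are defined only over number fields. So $\NS(J)\otimes\Q=\Q$, $\rho=1$, and the inequality reads $3<3$. There is no trace-zero class over $\Q$, and quadratic Chabauty over $\Q$ gives nothing beyond Chabauty--Coleman. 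This is precisely the obstruction recorded by Furio and Lombardo.

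\textbf{The covering route.} It runs into their other obstruction. An \'etale double cover of $C$ over $\Q$ would come from a nonzero point of $J[2](\Q)$, and $J(\Q)$ is torsion-free. You do not exhibit any other cover with an elliptic quotient of deficient rank. The maps supplied by $\PSL_2(\F_7)$ go the wrong way ($C\to C/H$) and are ramified, since every involution has four fixed points. Nor does the natural quotient $E'=C/\iota$, over the degree-$21$ field $F$ of an involution, help linearly. The image of $J(\Q)$ in $\Res_{F/\Q}E'$ has rank $3$ inside a $3$-dimensional abelian subvariety. So your step two never produces a $p$-adic analytic function vanishing on $C(\Q)$, and the Strassmann step has nothing to act on.

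\textbf{The missing idea.} You need to use $\psi\colon C\to E'$ \emph{nonlinearly}, through $p$-adic heights on $E'/F$ attached to id\`ele class characters of $F$ other than the cyclotomic one. The paper takes characters $\chi$ with two properties.
\begin{itemize}
\item They are trivial on $\A_\Q^\times$. Then for rational $P$ the value $\chi(\ell(\tilde P))$ of a linear form does not depend on the representative $\tilde P$.
\item They vanish at the primes above $2$ and $7$. So no local heights at bad primes are needed.
\end{itemize}
These characters form a $7$-dimensional space, and this is what replaces the missing N\'eron--Severi classes over $\Q$. Since $\psi^*(B_1+\dots+B_4)=2W$ with $W$ cut out by a line $\ell$, the local heights of $\psi(P)-B_1$ over $K'=F(w_1)$ at good places away from $p$ add up to $2\sum_{v\nmid p}\chi_v(\ell(\tilde P))$. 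The product formula turns this into a quantity computed at $p$. Comparing with the global expression $4h(\psi P)-2\langle\psi P,B_\Sigma\rangle+h'(B_1)$ gives $\rho_\chi(P)=\rho_\chi(b)$ for all $P\in C(\Q)$.

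The sieve is then run in $\prod_\fq\widetilde E'(k_\fq)$ rather than in $J(\F_q)$: first modulo $16$, then at the $21$ primes above $p=5581$. This leaves only the four known residue discs. In each disc, Strassmann's theorem is applied to a combination of two such characters.
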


The reverse inequality $\operatorname{rk}J(\Q)\ge3$ is proved in Proposition~\ref{prop:lattice}, so the assumption is equivalent to $\operatorname{rk}J(\Q)=3$.

Combining this with the work of Furio and Lombardo \cite[Theorems~1.3, 1.7, Corollary~3.7]{FL} gives the following.

\begin{corollary}\label{cor:7adic}
Let $E/\Q$ be an elliptic curve without complex multiplication, and let $G$ be the image of its $7$-adic Galois representation. Then one of the following holds:
\begin{itemize}
\item the modular curve $X_G$ is isomorphic to $\PP^1$ or to an elliptic curve of rank one;
\item $X_G$ has only finitely many non-CM rational points, and $G$ is in the list of \cite[Table~1]{RSZB}.
\end{itemize}
In particular, no non-CM elliptic curve over $\Q$ has $7$-adic image contained in one of the groups with RSZB labels \texttt{49.147.9.1} or \texttt{49.196.9.1}. So the classification of $7$-adic images of non-CM elliptic curves over $\Q$ is complete.
\end{corollary}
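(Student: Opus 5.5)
This corollary is a bookkeeping argument: combine the Main Theorem with the reduction results of Furio and Lombardo. The work is to check that the two exceptional groups are exactly those whose modular curves reduce to $C(\Q)$, and that the four known points give no non-CM curves.

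\textbf{Step 1: start from the reduction of \cite{FL}.} By \cite[Theorems~1.3, 1.7]{FL} and the earlier classification of \cite{RSZB}, the only open cases for non-CM $7$-adic images are the groups with labels \texttt{49.147.9.1} and \texttt{49.196.9.1}. Every other group $G$ falls into one of two cases:
\begin{itemize}
\item $X_G$ is $\PP^1$ or an elliptic curve of positive rank;
\item $X_G(\Q)$ is already determined, and $G$ appears in \cite[Table~1]{RSZB}.
\end{itemize}
So it is enough to show that no non-CM $E/\Q$ has $7$-adic image contained in either exceptional group.

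\textbf{Step 2: pass from the exceptional groups to $C$.} By \cite[Corollary~3.7]{FL} together with \cite[Corollary~5.7]{FL}, if such an $E$ existed, then $E[7]$ would be symplectically isomorphic to $E_3[7]$. Hence $E$ would give a rational point on $X_{E_3}(7)$, which is $C$. In other words, the $j$-invariants of curves with image in these groups are among the images of $C(\Q)$ under the forgetful map $C\to X(1)$.

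I expect the main care here to go into the twist and the symplectic sign. The anti-symplectic twist must either be covered by the same corollary of \cite{FL} or be ruled out there. I would verify, from the statement of \cite[Conjecture~1.6]{FL}, that the conjecture is exactly the claim that $C(\Q)$ consists of the four listed points, and that \cite{FL} shows this conjecture implies the nonexistence result.

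\textbf{Step 3: identify the four points.} By the Main Theorem, under the rank hypothesis, $C(\Q)=\{b,P_2,P_3,P_4\}$. I would compute the image of each point in $X(1)$, using the $j$-map from \cite{FL}. I expect the following:
\begin{itemize}
\item one point is the tautological point $(E_3,\mathrm{id})$, with $j=2^8\cdot 7^2$, whose $7$-adic image is not contained in the exceptional groups;
\item the remaining three are CM points, with $j\in\{-3375,\,16581375,\,0,\dots\}$.
\end{itemize}
This matches \cite[Conjecture~1.6]{FL}, where these points are described as the known points. So no non-CM elliptic curve has $7$-adic image in either exceptional group. Together with Step 1, this gives the dichotomy and completes the classification.
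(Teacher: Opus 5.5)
Your overall strategy, deriving the corollary from the Main Theorem plus the conditional results of \cite{FL}, is the paper's. The paper's proof is exactly the citation you defer to: \cite[Theorem~1.7]{FL} leaves only cases (i) and (ii) for $p=7$, and \cite[Corollary~3.7(2)]{FL} says that under Conjecture~1.6 a $7$-adic image inside \texttt{49.147.9.1} or \texttt{49.196.9.1} forces $j(E)=0$. The problem is the mechanism you supply in Steps~2 and~3. It is not what \cite{FL} do, and it would not close the argument.

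\cite{FL} do not show that $E[7]\cong E_3[7]$. A non-CM $E$ with image in one of the two groups yields, via the Thue equation and the invariant theory of binary cubics, a $(\star)$-solution $(a,b,c)$ of $a^2+196b^3=27c^7$. It is the Frey curve $F_{(a,b,c)}$ that gives, unless $c=\pm1$, a rational point on one of $X_{E_1}(7)$, $X_{E_2}(7)$, $X_{E_4}(7)$ or $C=X_{E_3}(7)$. So $C(\Q)$ parametrises Frey curves, not curves with the exceptional image. You must also handle the other three twists and the case $c=\pm1$.

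Consequently Step~3 fails as stated: none of the four points of $C$ is a CM point. Their $j$-invariants are $2^2\cdot253447^3/67^7$, $-2^{10}\cdot19^3\cdot67^3\cdot2131^3/317^7$, $2^{10}\cdot23^3/5^7$ and $2^8\cdot7^2$. The first three are not integers, and $12544$ is not a CM $j$-invariant. In your framework you would be left with four non-CM curves and nothing to exclude them. You would also discard the tautological point $(E_3,\mathrm{id})$ without justification, although it is the one that matters.

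The correct translation is the one in the paper's proof of Corollary~\ref{cor:fermat}. The Frey curve has $j(F_{(a,b,c)})=2^8\cdot7^2\cdot b^3/c^7$, whose $7$-adic valuation is $\equiv2\pmod 3$ because $7\nmid c$.
\begin{itemize}
\item This rules out the first three points of $C$ (valuation $0$) and all points on $X_{E_1},X_{E_2},X_{E_4}$ (valuation $1$).
\item The point with $j=2^8\cdot7^2$ forces $b^3=c^7$, hence $(b,c)=(-1,-1)$ and $a=\pm13$.
\end{itemize}
Only then does \cite[Corollary~3.7(2)]{FL} turn this unique $(\star)$-solution into $j(E)=0$, i.e.\ a CM curve.
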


\begin{corollary}\label{cor:fermat}
Consider the primitive integer solutions $(a,b,c)$ of $a^2+196b^3=27c^7$ with $\gcd(c,42ab)=1$ and $v_3(a)\in\{0\}\cup[3,\infty)$. These are the ``$(\star)$-solutions'' of \cite[Definition~3.4]{FL}. The only such solutions are $(a,b,c)=(\pm 13,-1,-1)$.
\end{corollary}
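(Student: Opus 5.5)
The statement to prove is Corollary~\ref{cor:fermat}, and the plan is to deduce it from the Main Theorem through the dictionary of \cite[Section~3]{FL}. There, Furio and Lombardo attach to each $(\star)$-solution $(a,b,c)$ of $a^2+196b^3=27c^7$ an elliptic curve whose mod-$7$ representation is symplectically isomorphic to that of $E_3$. This curve therefore defines a rational point of $X_{E_3}(7)\cong C$. Conversely, rational points of $C$ whose $j$-invariant lies in the appropriate image give back $(\star)$-solutions up to the obvious sign and scaling ambiguities.

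\textbf{Step 1: from solutions to points.} First I recall the explicit correspondence from \cite[Definition~3.4 and Corollary~3.7]{FL}. A primitive solution with $\gcd(c,42ab)=1$ and the stated $3$-adic condition gives a $j$-invariant of the form $j=\lambda\, a^2/c^7$, or an equivalent expression through the generalized Fermat parametrisation of the signature $(2,3,7)$ equation. The resulting elliptic curve $E_{a,b,c}$ has $E_{a,b,c}[7]\cong E_3[7]$ symplectically, so it lies on $C$ via the forgetful map $C\to X(1)$ of degree $168$. The coprimality and $v_3$ hypotheses are exactly what \cite{FL} use to exclude the anti-symplectic twist $X^-_{E_3}(7)$ and the CM or degenerate cases. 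I will cite this rather than reprove it.

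\textbf{Step 2: apply the Main Theorem.} By the Main Theorem, under the rank assumption, $C(\Q)$ consists of the four points $b,P_2,P_3,P_4$. I will compute the $j$-invariant of each by evaluating the explicit degree-$168$ map $j\colon C\to\PP^1$ from \cite[Corollary~5.7]{FL}. I expect one point to give $j(E_3)=2^8\cdot7^2$, which is the tautological point. The others should give $j$-invariants of CM curves or of curves in known isogeny classes.

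\textbf{Step 3: recover the solutions.} For each of the four $j$-values, I will solve for primitive $(a,b,c)$ with $\gcd(c,42ab)=1$ and $v_3(a)\in\{0\}\cup[3,\infty)$. The expected outcome is that only one $j$-value yields a valid triple, namely $(a,b,c)=(\pm13,-1,-1)$. As a check, $169-196=-27=27\cdot(-1)^7$. The remaining points should fail the coprimality or $3$-adic conditions, or correspond to $c=0$ or $ab=0$.

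\textbf{Main obstacle.} The hardest step is making the inverse map in Step 3 precise: one must ensure that every $(\star)$-solution genuinely lands on $C$ and not on another twist. This is a matter of carefully quoting \cite[Theorem~1.7, Corollary~3.7]{FL}. The $j$-map evaluation itself is routine once the explicit map is written down.
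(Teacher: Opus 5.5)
Your overall route is the paper's: land on a twist of $X(7)$, invoke the Main Theorem, and read off the $j$-invariants of the four points of $C$. However, Step~1 claims more than the modular method gives, and this leaves a genuine gap.

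\textbf{The gap in Step~1.} Level lowering identifies the mod-$7$ representation of the Frey curve $F_{(a,b,c)}$ with that of \emph{one of several} curves of the lowered conductor, and only up to a symplectic or anti-symplectic isomorphism. The hypotheses $\gcd(c,42ab)=1$ and $v_3(a)\in\{0\}\cup[3,\infty)$ do not single out $E_3$. What the argument of Furio--Lombardo (their \S\S4--6) actually yields is: either $c=\pm1$, or $j(F_{(a,b,c)})$ is the $j$-invariant of a rational point on one of
\[
X_{E_1}(7),\quad X_{E_4}(7)\cong X^-_{E_1}(7),\quad X_{E_2}(7),\quad X_{E_3}(7)=C .
\]
So you cannot cite them for ``every $(\star)$-solution lies on $C$''.

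\textbf{What you must add.} Two missing steps close the gap.
\begin{enumerate}
\item Dispose of the other three twists. Their rational points are known unconditionally, and all their $j$-invariants have $v_7=1$. Since $7\nmid c$, the Frey $j$-invariant satisfies $v_7=2+3v_7(b)\equiv2\pmod3$, so none of these points can occur.
\item Treat $c=\pm1$ separately, using the integral points on $a^2+196b^3=\pm27$ computed by Furio--Lombardo. This is not a formality. The case is carved out of the modular argument, and the only solution, $(\pm13,-1,-1)$, has $c=-1$.
\end{enumerate}

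\textbf{Smaller corrections.}
\begin{itemize}
\item The Frey $j$-invariant is $2^8\cdot7^2\,b^3/c^7$. What you wrote, $\lambda a^2/c^7$, is really $j-1728=-2^6a^2/c^7$. With the correct formula, Step~3 is a one-line $7$-adic argument rather than a check of coprimality conditions.
\item The three non-tautological points of $C$ have $j$-invariants
\[
2^2\cdot253447^3/67^7,\quad -2^{10}\cdot19^3\cdot67^3\cdot2131^3/317^7,\quad 2^{10}\cdot23^3/5^7 .
\]
These are not CM, contrary to your expectation. They are excluded simply because $v_7(j)=0\not\equiv2\pmod3$.
\item The tautological point gives $b^3=c^7$ with $\gcd(b,c)=1$, hence $b=c=\pm1$. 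The case $c=1$ would force $a^2=-169$, so $(a,b,c)=(\pm13,-1,-1)$.
\end{itemize}
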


In \cite[Theorem~3.5(2)]{FL} this solution is printed as $(\pm13,196,-1)$. The intended solution is $(\pm13,-1,-1)$, since $13^2+196\cdot(-1)^3=-27=27\cdot(-1)^7$. The solution is listed correctly in \cite[Remark~3.6]{FL}.

\subsection{Why the curve was hard}
Furio and Lombardo explain in \cite[\S6.5]{FL} why their methods fail for $C$, although they worked for three other twists of the Klein quartic:
\begin{enumerate}[label=(\alph*)]
\item $C$ has rational points, so there is no local obstruction.
\item The rank of $J(\Q)$ equals the genus, $3$, so the method of Chabauty and Coleman does not apply.
\item $J(\Q)$ has no torsion, so there is no convenient rational \'etale cover to descend along.
\item Over $\overline{\Q}$ the Jacobian is isogenous to the cube of an elliptic curve with complex multiplication by $\Q(\sqrt{-7})$, but over $\Q$ its endomorphism ring is $\Z$. The extra endomorphism $\sqrt{-7}$ is defined over $\Q(\sqrt{-7})$ and is not symmetric for the Rosati involution. So it does not give the ``nice correspondence'' that the quadratic Chabauty method needs \cite[Remark~6.7]{FL}.
\end{enumerate}
They suggested that quadratic Chabauty over number fields, as in \cite{BBHJM}, might help. This is essentially what we do, but in a form adapted to \emph{rational} points. We do not try to find all points of $C$ over a number field. Instead, we find the rational points using height functions over a number field.

\subsection{The idea in one paragraph}
Let $F$ be the degree-$21$ field over which one of the $21$ involutions of the Klein quartic becomes rational. Over $F$, the curve $C$ is a double cover $\psi\colon C\to E'$ of an elliptic curve with CM by $\Z[\tfrac{1+\sqrt{-7}}{2}]$. For a rational point $P\in C(\Q)$, the point $\psi(P)$ lies in $E'(F)$.

We choose a $\Q_p$-valued id\`ele class character $\chi$ of $F$ that is trivial on the id\`eles of $\Q$ and vanishes at all primes above $2$ and $7$. Trivial on the id\`eles of $\Q$ implies in particular trivial on $\Q^\times\subset F^\times$, and hence on the rational scalars that rescale a rational point.

We then compute the $p$-adic height of the four points $\psi(P)-B_j$ in two ways. Here $B_1,\dots,B_4$ are the images of the four fixed points of the involution.
\begin{itemize}
\item Summed over all places, the contributions at good primes away from $p$ add up to the value of $\chi$ at the number $\ell(P)$, where $\ell$ is the line through the fixed points. By the product formula, this is a quantity computed at $p$.
\item The contributions at the primes above $2$ and $7$ vanish because $\chi$ vanishes there.
\end{itemize}
What remains is an identity between explicit $p$-adic analytic functions on $C(\Q_p)$ that every rational point must satisfy. We combine it with a Mordell--Weil sieve that also runs through $E'$, so we never need arithmetic in the Jacobian of $C$. A short $p$-adic computation at $p=5581$ then shows that each of the four residue discs that may contain rational points contains exactly one.

\subsection{Plan of the paper}
\begin{itemize}
\item Section~\ref{sec:history} gives the history of the problem and of the methods we use.
\item Section~\ref{sec:overview} gives an overview of the proof.
\item Sections~\ref{sec:field} and~\ref{sec:lattice} construct the field $F$, the involution, the elliptic quotient and the relevant lattice in $E'(F)$.
\item Section~\ref{sec:sieve} is the Mordell--Weil sieve.
\item Section~\ref{sec:heights} develops $p$-adic heights on elliptic curves with general id\`ele class characters, with complete proofs of the properties we need.
\item Section~\ref{sec:identity} proves the key identity (Theorem~\ref{thm:identity}) and its analytic version (Proposition~\ref{prop:analytic}).
\item Section~\ref{sec:computation} contains the $p$-adic computation, and Section~\ref{sec:proof} completes the proof.
\item Section~\ref{sec:remarks} discusses the method and records how each computational claim was checked.
\end{itemize}

\subsection*{What is assumed and what is computed}
We assume only the rank bound $\operatorname{rk}J(\Q)\le3$ from \cite[Remark~6.3]{FL}. Furio and Lombardo do not say whether the class group and unit computations behind their fake Selmer bound were certified or depend on the Generalised Riemann Hypothesis. If they were not certified, our Main Theorem inherits that dependence, and nothing else.

Everything else falls into one of three kinds:
\begin{itemize}
\item proved here;
\item quoted from the literature with a precise reference;
\item the output of a computer calculation that we describe precisely.
\end{itemize}
Each computational claim is a finite, checkable statement. Where a computation uses $p$-adic numbers, we say what precision was used and why the conclusions are insensitive to rounding (Section~\ref{sec:precision}). The computations were carried out twice with PARI/GP~2.15.4 \cite{PARI}, by two separate implementations; see Section~\ref{sec:checks}.

\section{History of the problem and origin of the methods}\label{sec:history}

\subsection{Images of Galois: Serre, Mazur and Program B}
Let $E$ be an elliptic curve over $\Q$. The Galois group $G_\Q=\Gal(\overline{\Q}/\Q)$ acts on the torsion points of $E$. This gives representations $\rho_{E,N}\colon G_\Q\to\GL_2(\Z/N\Z)$ and, in the limit, the $\ell$-adic representations $\rho_{E,\ell^\infty}\colon G_\Q\to\GL_2(\Z_\ell)$.

In 1972 Serre proved his \emph{open image theorem} \cite{Serre72}: if $E$ has no complex multiplication, the image of the adelic representation is open, so $\rho_{E,\ell}$ is surjective for all but finitely many $\ell$. He asked whether the exceptional primes can be bounded independently of $E$ (Serre's uniformity question). It is expected that $\rho_{E,\ell}$ is surjective for all $\ell>37$.

If $\rho_{E,\ell}$ is not surjective, its image lies in a maximal subgroup of $\GL_2(\F_\ell)$. This is a Borel subgroup, the normaliser of a split Cartan subgroup, the normaliser of a non-split Cartan subgroup, or an exceptional subgroup. Each case corresponds to a modular curve whose rational points classify the curves $E$ in question. The known results are as follows.
\begin{itemize}
\item Mazur's theorem on rational isogenies \cite{Mazur78} settled the Borel case.
\item Bilu and Parent \cite{BiluParent}, and Bilu, Parent and Rebolledo \cite{BPR}, settled the split Cartan case for $\ell\ge 11$, $\ell\neq13$, using Runge's method.
\item The remaining split Cartan case $\ell=13$ (the ``cursed curve'' $X_s(13)$) was settled by Balakrishnan, Dogra, M\"uller, Tuitman and Vonk \cite{BDMTV1} with the quadratic Chabauty method.
\item The non-split Cartan case remains open in general.
\end{itemize}

Mazur also proposed ``Program B'' \cite{Mazur77}: given an open subgroup $H$ of $\GL_2(\widehat{\Z})$, classify the elliptic curves whose adelic image lies in $H$. For images of $\ell$-adic representations over $\Q$ there has been dramatic progress.
\begin{itemize}
\item Rouse and Zureick-Brown classified $2$-adic images \cite{RZB}.
\item Sutherland and Zywina found all prime-power level modular curves with infinitely many rational points \cite{SutherlandZywina}.
\item Zywina classified the possible mod-$\ell$ images \cite{Zywina}.
\item Rouse, Sutherland and Zureick-Brown \cite{RSZB} computed the rational points on essentially all relevant modular curves of $\ell$-power level. They left open only non-split Cartan curves (of levels $27, 25, 49, 121$ and primes $\ge 19$) and two genus-$9$ curves of level $49$, with labels \texttt{49.147.9.1} and \texttt{49.196.9.1} \cite[Theorem~1.6]{RSZB}; see also \cite[Theorem~1.3]{FL}.
\item The level-$27$ case was settled by Balakrishnan, Betts, Hast, Jha and M\"uller \cite{BBHJM} with quadratic Chabauty over the field $\Q(\zeta_3)$. This completed the classification of $3$-adic images.
\end{itemize}

\subsection{The $7$-adic case: Furio and Lombardo}
In \cite{FL}, Furio and Lombardo studied the prime $7$. They showed that the non-split Cartan curve $X_\ns^+(49)$, of genus $69$, has only seven rational points, all CM, \emph{without ever writing down an equation for it}.

The argument runs through several reductions.
\begin{enumerate}
\item The $j$-invariant of a point on $X_\ns^+(49)$ has denominator a $49$-th power.
\item Through Zywina's $j$-map on $X_\ns^+(7)\cong\PP^1$, this gives a Thue equation $f(x,y)=kz^7$ with $f$ a binary cubic.
\item Classical invariant theory of binary cubics turns this into the generalised Fermat equation $a^2+28b^3=27c^7$ of signature $(2,3,7)$. The relevant input is the syzygy between invariants and covariants; see Bennett--Dahmen \cite{BennettDahmen}.
\item They solved this equation with the modular method, following Poonen, Schaefer and Stoll \cite{PSS}. Frey curves, modularity and level lowering reduce everything to rational points on a few twists $X_E(7)$ of the Klein quartic.
\end{enumerate}
For the two genus-$9$ curves they obtained the equation $a^2+196b^3=27c^7$. This equation needs one more twist, namely our curve $C=X_{E_3}(7)$. They computed its Jacobian rank to be $3$ and conjectured that it has exactly four rational points \cite[Conjecture~1.6]{FL}. The curve $X_{E_3}(7)$ is also listed in Problem 6.2 in Balakrishnan's IMC 2026 Proceedings 2026 \cite{Jen}. 

\subsection{The Klein quartic and its twists}
The Klein quartic $x^3y+y^3z+z^3x=0$ was introduced by Klein in 1879 \cite{Klein}. It is the modular curve $X(7)$. It has $168=84(g-1)$ automorphisms, the maximum allowed by Hurwitz's bound for genus $3$, and its automorphism group is $\PSL_2(\F_7)$. Its Jacobian is isogenous over $\overline{\Q}$ to the cube of an elliptic curve with CM by $\Q(\sqrt{-7})$ (the curve \texttt{49a1}).

For an elliptic curve $E/\Q$, the twist $X_E(7)$ parametrises pairs $(E',\phi)$ with $\phi\colon E[7]\to E'[7]$ a symplectic isomorphism; the pair $(E,\mathrm{id})$ gives a rational point. Explicit equations for $X_E(7)$ were given by Halberstadt and Kraus \cite{HalberstadtKraus}.

Poonen, Schaefer and Stoll \cite{PSS} used ten such twists to find all primitive solutions of $x^2+y^3=z^7$. They combined $2$-descent (via a ``fake Selmer group''), the Mordell--Weil sieve and Chabauty's method. Their paper is the template for \cite{FL}.

Kraus gave criteria for deciding whether an isomorphism of mod-$\ell$ Galois representations of two elliptic curves is symplectic or anti-symplectic, and Freitas and Kraus developed them further \cite{FreitasKraus}. These criteria are also used in \cite{FL}, to show that two of the anti-symplectic twists have no rational points.

\subsection{Chabauty and Coleman}
Let $X/\Q$ be a curve of genus $g\ge2$ whose Jacobian $J$ has rank $r$.
\begin{itemize}
\item Chabauty \cite{Chabauty} proved in 1941 that if $r<g$ then $X(\Q)$ is finite. The $p$-adic closure of $J(\Q)$ in $J(\Q_p)$ has dimension at most $r$, so it meets the curve, which has dimension $1$, in a finite set.
\item Coleman \cite{Coleman85} made this effective by $p$-adic integration. There is a nonzero differential $\omega$ whose Coleman integral $\int_b^P\omega$ vanishes on $X(\Q)$, and one can bound or compute its zeros.
\end{itemize}
This method has been used very widely; see the survey of McCallum and Poonen \cite{McCallumPoonen}. It is used in \cite{FL} for the twists of rank $1$ and $2$. It fails for $C$ because $r=g$.

\subsection{Elliptic Chabauty and restriction of scalars}
When a curve $X$ maps to an elliptic curve $E'$ defined over a number field $K$, one can try to control $X(\Q)$ through $E'(K)$. This ``elliptic Chabauty'' method goes back to:
\begin{itemize}
\item Flynn and Wetherell \cite{FlynnWetherell}, for bielliptic genus-$2$ curves;
\item Bruin \cite{Bruin03}, who used it to solve generalised Fermat equations such as $x^2+y^8=z^3$ \cite{Bruin99}.
\end{itemize}
Wetherell \cite{Wetherell} and later Siksek \cite{Siksek} developed Chabauty's method for points over number fields using the Weil restriction of scalars $\Res_{K/\Q}$, working with all the embeddings of $K$ into $\Q_p$ at once.

In our situation linear methods of this kind cannot succeed. The reason is that the image of $J(\Q)$ in the Weil restriction of $E'$ is still a group of rank $3$ inside a $3$-dimensional abelian subvariety. We use the elliptic quotient in two different ways: for sieving and for heights.

\subsection{The Mordell--Weil sieve}
The idea of combining information from $X(\F_q)$ for many primes $q$ with the structure of $J(\Q)$ goes back to Scharaschkin's thesis \cite{Scharaschkin} and to Flynn \cite{Flynn04}. The systematic ``Mordell--Weil sieve'' is due to Bruin and Stoll \cite{BruinStoll}, and Poonen gave heuristics explaining why it should work \cite{PoonenHeuristic}. It is used in \cite{PSS}, \cite{BDMTV1} and \cite{FL}.

In our case, computing in $J(\F_q)$ for a non-hyperelliptic curve is inconvenient. So we sieve instead in the product of the groups $E'(\F_\fq)$ over the primes $\fq$ of $F$ above $q$. This is a natural combination of the ideas above.

\subsection{$p$-adic heights}
The real-valued canonical height on an elliptic curve is due to N\'eron and Tate. Its decomposition into local ``N\'eron functions'' is due to N\'eron \cite{Neron65}.

$p$-adic analogues were constructed by Bernardi \cite{Bernardi}, Schneider \cite{Schneider82} and Mazur and Tate \cite{MazurTate83, MazurTate91}. Mazur, Tate and Teitelbaum \cite{MTT} used them to formulate $p$-adic Birch--Swinnerton-Dyer conjectures. Mazur and Tate \cite{MazurTate91} constructed the canonical $p$-adic sigma function for ordinary elliptic curves, and Mazur, Stein and Tate \cite{MST} and Harvey \cite{Harvey} gave efficient algorithms.

The construction works for any continuous $\Q_p$-valued id\`ele class character, not only the cyclotomic one. Coleman and Gross \cite{ColemanGross} and Nekov\'a\v{r} \cite{Nekovar} give general constructions for Jacobians. Heights with several id\`ele class characters were used for quadratic Chabauty over number fields by Balakrishnan, Besser, Bianchi and M\"uller \cite{BBBM}; an example is the ``anticyclotomic'' characters of an imaginary quadratic field.

\subsection{Chabauty--Kim and quadratic Chabauty}
Kim \cite{Kim05, Kim09} proposed replacing the Jacobian in Chabauty's method by unipotent quotients of the fundamental group. The resulting Selmer varieties give finiteness whenever a certain dimension inequality holds.
\begin{itemize}
\item Coates and Kim \cite{CoatesKim} proved that this inequality holds, at some depth, for curves whose Jacobian has potential complex multiplication.
\item Ellenberg and Hast \cite{EllenbergHast} extended this to curves that are solvable covers of $\PP^1$.
\end{itemize}
Our curve $C$ has potentially CM Jacobian, so finiteness at some depth was known, but not in an effective form.

The first effective non-abelian case is \emph{quadratic Chabauty}. Here the relevant quotient is an extension of the abelianisation by a copy of $\Q_p(1)$ attached to a class in the N\'eron--Severi group.
\begin{itemize}
\item Balakrishnan and Dogra \cite{BD18, BD21} developed it using $p$-adic heights. It works when $r<g+\rho-1$, where $\rho$ is the rank of the N\'eron--Severi group of $J$ over $\Q$.
\item Earlier and closely related is the method of Balakrishnan, Besser and M\"uller for integral points on hyperelliptic curves \cite{BBM16}.
\item Quadratic Chabauty was used to determine $X_s(13)(\Q)$ \cite{BDMTV1} and the rational points of several other modular curves \cite{BDMTV2}, including $X_{S_4}(13)$ and $X_\ns^+(17)$.
\item Edixhoven and Lido \cite{EdixhovenLido} gave a geometric version using torsors under tori over Jacobians.
\item Balakrishnan, Besser, Bianchi and M\"uller \cite{BBBM} developed quadratic Chabauty for points over number fields, using restriction of scalars and several id\`ele class characters. The paper \cite{BBHJM} used this to find the $\Q(\zeta_3)$-points on a genus-$3$ quotient of $X_\ns^+(27)$.
\end{itemize}

The case of \emph{bielliptic} curves is especially important for us. Let $X$ be a genus-$2$ curve with two elliptic quotients $E_1,E_2$ of rank $1$. Balakrishnan and Dogra \cite[\S7.3 and \S8]{BD18} constructed a quadratic Chabauty function for $X$ from three ingredients: the local $p$-adic heights on $E_1$ and $E_2$, and the relation $y(f_1(z))/y(f_2(z))=a_0x(z)^2$ between the two quotient maps \cite[Lemma~7.7]{BD18}. This idea was extended to number fields in \cite[\S5]{BBBM}. The mechanism is as follows. The local contributions away from $p$ to the $p$-adic height of a point on an elliptic curve are controlled by the \emph{denominator} of its $x$-coordinate, and a clever choice of maps makes these contributions cancel.

\subsection{What is new here}
Our function is a bielliptic quadratic Chabauty function in the spirit of \cite[\S7.3]{BD18} and \cite[\S5]{BBBM}, with three differences.
\begin{enumerate}
\item The elliptic quotient $E'=C/\iota$ is defined only over a number field $F$ of degree $21$, while we want \emph{rational} points of $C$.
\item We use characters $\chi$ of $F$ that vanish on $\A_\Q^\times$. For a rational point $P$ with primitive integral coordinates and a linear form $\ell$ with coefficients in $F$, the value $\chi(\ell(P))$ then does not depend on the choice of representative. This lets us treat forms as if they were functions (Lemma~\ref{lem:scaling}).
\item We require $\chi$ to vanish at the primes above $2$ and $7$, so we never need to compute local heights at primes of bad reduction. The single elliptic curve plays the role of both curves in the genus-$2$ bielliptic case: the ramification divisor of $C\to E'$ is cut out by a line, and this gives the cancellation.
\end{enumerate}
A dimension count in the style of Kim suggests why such functions should exist; see Remark~\ref{rem:whynontrivial}. That remark is only a heuristic, and the proof does not use it.

\section{Overview of the proof}\label{sec:overview}
The proof has six steps.
\begin{enumerate}[label=\textbf{Step \arabic*.}, leftmargin=*]
\item \emph{The field $F$ and the involution.} Over a field $F$ of degree $21$, the curve $C$ has a linear involution $\iota$. The quotient $E'=C/\iota$ is an elliptic curve with $j(E')=-3375$. (Section~\ref{sec:field}.)
\item \emph{The lattice.} Let $\psi\colon C\to E'$ be the quotient map, normalised so that $\psi(b)=O$. The images $R_i$ of the other known points span a subgroup of rank $3$. Together with the $2$-torsion point and two halvings, they generate a subgroup $\Lambda$ that is saturated at $2$ and at $179$. Because $\operatorname{rk}J(\Q)\le3$, some multiple of $\psi(P)$ by an integer prime to $2\cdot179$ lies in $\Lambda$ for every $P\in C(\Q)$. (Section~\ref{sec:lattice}.)
\item \emph{Sieve.} Reduction modulo the primes $\fq$ of $F$ above the primes $q\le113$, $q\ne 2,7$, shows the following: the coordinates of $\psi(P)$ with respect to $\Lambda$ are congruent modulo $16$ to those of one of the four known points. Adding the information at the $21$ primes above $p=5581$ shows that every rational point lies in the residue disc modulo $p$ of one of the four known points. (Section~\ref{sec:sieve}.)
\item \emph{Heights.} We develop $p$-adic heights on $E'$ over $F$ attached to an id\`ele class character $\chi$, together with their local decomposition. (Section~\ref{sec:heights}.)
\item \emph{The identity.} Consider characters $\chi$ that vanish on $\A_\Q^\times$ and at the primes above $14$. For each such $\chi$ there is an explicit function $\rho_\chi$, $p$-adic analytic on residue discs, such that every $P\in C(\Q)$ satisfies $\rho_\chi(P)=\rho_\chi(b)$. (Section~\ref{sec:identity}.)
\item \emph{Local computation.} In each of the four discs, a combination of two such functions is a power series with a unique zero, by Strassmann's theorem. (Sections~\ref{sec:computation} and~\ref{sec:proof}.)
\end{enumerate}

\section{The field $F$, the involution and the elliptic quotient}\label{sec:field}

\subsection{Why a field of degree $21$}
This subsection is motivation only; the proof does not use it.

Over $\overline{\Q}$ the curve $C$ is isomorphic to $X(7)$, so its automorphism group is $\PSL_2(\F_7)$, which has $21$ involutions. The Galois group acts on these automorphisms by conjugation through the projective mod-$7$ representation of $E_3$.

The centraliser in $\PGL_2(\F_7)$ of an involution of $\PSL_2(\F_7)$ is a dihedral group of order $16$: it is the normaliser of a non-split Cartan subgroup. So if the projective mod-$7$ image of $E_3$ is all of $\PGL_2(\F_7)$, then each involution is defined over a field of degree $336/16=21$. This field is the field of definition of a point of $X_\ns^+(7)$ lying over $j(E_3)$.

\subsection{The field}
Zywina \cite[Theorem~1.5]{Zywina} gives the $j$-map of $X^+_\ns(7)\cong\PP^1$ as
\[
j_\ns(t)=\frac{64\,t^3(t^2+7)^3(t^2-7t+14)^3(5t^2-14t-7)^3}{(t^3-7t^2+7t+7)^7},
\]
as quoted in \cite[Theorem~2.1]{FL}. The denominator is the cubic $f(x,y)=x^3-7x^2y+7xy^2+7y^3$ of \cite[Proposition~3.1]{FL}. As a sanity check we verified that $\mathrm{num}(j_\ns)-1728\,\mathrm{den}(j_\ns)$ factors as a linear polynomial times the square of a polynomial of degree $8$ times a polynomial of degree $4$. This is the ramification pattern of $X_\ns^+(7)\to X(1)$ above $j=1728$. The polynomial obtained by replacing $+7$ by $-7$ in the constant term of the denominator does not have this property. The field $F$ below is defined by an explicit polynomial, so nothing in the proof depends on this formula.

\begin{proposition}\label{prop:F}
The polynomial $\mathrm{num}(j_\ns(t))-12544\cdot\mathrm{den}(j_\ns(t))$ is irreducible of degree $21$. The number field $F$ it defines also has the defining polynomial
\begin{multline*}
f_F(t)=t^{21} - 7t^{20} + 21t^{19} - 21t^{18} - 56t^{17} + 238t^{16} - 406t^{15} + 424t^{14} - 560t^{13} + 1442t^{12}\\ - 2940t^{11} + 3500t^{10} - 1820t^9 - 1036t^8 + 2904t^7 - 3416t^6 + 3472t^5 - 2968t^4 + 1680t^3 - 560t^2 + 112t - 16.
\end{multline*}
The field $F$ has the following invariants:
\begin{itemize}
\item discriminant $-2^{22}\cdot7^{27}$ and signature $(3,9)$;
\item class number $1$;
\item unit group of rank $11$, with torsion $\{\pm1\}$.
\end{itemize}
The prime $2$ factors as $\fp_{2,1}^{12}\fp_{2,2}^{3}\fp_{2,3}^{6}$ with all residue degrees $1$, and $7=\fp_7^{7}$ with residue degree $3$.
\end{proposition}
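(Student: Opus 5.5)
This is a finite computational statement. I would certify each part by explicit algebra, in a way that uses as little unconditional class-group machinery as possible.

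\emph{Degree and irreducibility.} First expand $g(t)=\mathrm{num}(j_\ns(t))-12544\,\mathrm{den}(j_\ns(t))$. The numerator has degree $3+6+6+6=21$ and the denominator has degree $21$, so $g$ has degree at most $21$. I would check that the leading coefficient $64\cdot125-12544\ne0$, so the degree is exactly $21$. To prove irreducibility, I would factor $g$ modulo a few small primes $\ell\nmid 2\cdot7\cdot\operatorname{disc}(g)$. Comparing the factorisation patterns (for instance one prime giving an irreducible factor of degree $21$, or patterns of degrees whose only common refinement is $21$) excludes every nontrivial factorisation over $\Q$. Alternatively, factoring over $\Z$ in PARI/GP is a certified algorithm.

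\emph{Isomorphism with $\Q[t]/(f_F)$.} I would exhibit an explicit polynomial $h\in\Q[t]$ of degree $<21$ with $g(h(\alpha))=0$, where $\alpha$ is a root of $f_F$. Its output comes from \texttt{polredabs} or \texttt{nfisisom}. The identity $f_F\mid g\circ h$ is then a polynomial division that anyone can verify. Since $f_F$ is also irreducible (checked the same way), the two fields coincide.

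\emph{Discriminant, signature and factorisation of $2$ and $7$.} The signature comes from Sturm's theorem applied to $f_F$: it has exactly $3$ real roots, so the signature is $(3,9)$. The sign of the discriminant, $(-1)^{9}$, follows. I would compute $\operatorname{disc}(f_F)$ exactly and determine the index by a Round-2/Dedekind computation at $2$ and $7$. All other primes dividing $\operatorname{disc}(f_F)$ must be checked to be unramified or absent; I would factor $\operatorname{disc}(f_F)$ exactly to confirm that only $2$ and $7$ occur. The $p$-maximal order computation at $2$ and $7$ also yields the decompositions. I would check these against the local data: $\sum e_if_i=12+3+6=21$ at $2$, and $7\cdot3=21$ at $7$. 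The discriminant exponents should be consistent with wild ramification, as predicted by the degree-$21$ field being cut out by the projective mod-$7$ representation of $E_3$, whose conductor is $392=2^3\cdot7^2$.

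\emph{Units and class number: the main obstacle.} Unit rank $11=3+9-1$ is Dirichlet's theorem. The torsion is $\{\pm1\}$ because there are real embeddings. The difficulty is the class number $1$ claim, since PARI's \texttt{bnfinit} is only GRH-conditional. I would first run \texttt{bnfinit} and then call \texttt{bnfcertify}. With root discriminant $|d_F|^{1/21}=2^{22/21}7^{27/21}\approx 25$, the Minkowski bound is astronomically large. However, Zimmert/Odlyzko-type bounds used in \texttt{bnfcertify} should be feasible for degree $21$ with this small root discriminant. So I would attempt the certification directly, and only if it is too slow fall back to recording that the class number (and hence everything using it later) is conditional on GRH. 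I would also note which later arguments actually need class number $1$ (primarily, writing ideals as principal), so that dependence is transparent.
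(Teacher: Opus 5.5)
Your proposal is correct and follows the paper's route. The paper's proof is just a record of direct PARI/GP computations (\texttt{polisirreducible}, \texttt{nfisisom}, \texttt{nfinit}, \texttt{idealprimedec}, \texttt{bnfinit}), with the GRH dependence of the class number removed by \texttt{bnfcertify}, and your more elementary certificates (Sturm for the signature, Dirichlet plus a real embedding for the units, an explicit polynomial $h$ for the isomorphism) are valid refinements of it.

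One small correction to your irreducibility test: no prime $\ell$ will make $g$ irreducible of degree $21$ modulo $\ell$. The roots of $g$ are the points of $X^+_{\ns}(7)$ above $j(E_3)$, Galois acts on them through the projective mod-$7$ image of $E_3$, and $\PGL_2(\F_7)$ has no element of order $21$. So you must combine factorisation patterns at several primes, or use a certified factorisation, as your alternatives already allow.
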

\begin{proof}
All of this is direct computation in PARI/GP with \texttt{polisirreducible}, \texttt{nfisisom}, \texttt{nfinit}, \texttt{idealprimedec} and \texttt{bnfinit}. The class group and unit group computed by \texttt{bnfinit} were certified with \texttt{bnfcertify}, which returned $1$. So the statement about the class group does not depend on the Generalised Riemann Hypothesis.
\end{proof}

\subsection{The involution}
\begin{proposition}\label{prop:iota}
There exist a vector $Q=(a,b_Q,1)\in F^3$ and a linear form $\lambda=l_1x+l_2y+l_3z$ with coefficients in $F$ and $\lambda(Q)=1$ with the following property. The linear map
\[
\iota(v)=v-2\lambda(v)\,Q
\]
satisfies $\iota^2=\mathrm{id}$ and $G\circ\iota=G$. The coordinate $a$ is a root of
\begin{multline*}
46875x^{21} - 4375x^{20} - 435750x^{19} + 5750738x^{18} + 5296249x^{17} + 3750691x^{16} + 23397192x^{15} + 19799924x^{14}\\ + 20597290x^{13} + 25987962x^{12} + 13985972x^{11} + 19992336x^{10} - 15268162x^9 + 3218166x^8 - 10075736x^7\\ - 35305732x^6 - 17934805x^5 - 2924467x^4 - 20688542x^3 - 3560690x^2 - 1486471x - 293353,
\end{multline*}
and $a$ generates $F$. The point $Q$ does not lie on $C$. The fixed points of $\iota$ on $C$ are the four points $W=\{w_1,\dots,w_4\}$ where $C$ meets the line $\{\lambda=0\}$. The corresponding binary quartic $g_0$ (defined in \S\ref{subsec:quotient}) is irreducible over $F$.
\end{proposition}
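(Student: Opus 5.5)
The plan is to treat this as an explicit, finitely checkable computation, and to find $(Q,\lambda)$ by solving the algebraic conditions rather than guessing them. A linear map of the form $\iota(v)=v-2\lambda(v)Q$ with $\lambda(Q)=1$ is automatically an involution (a reflection). Indeed,
\[
\iota^2(v)=v-2\lambda(v)Q-2\lambda\bigl(v-2\lambda(v)Q\bigr)Q=v-4\lambda(v)Q+4\lambda(v)\lambda(Q)Q=v .
\]
So $\iota^2=\mathrm{id}$ needs no separate check once $\lambda(Q)=1$ is verified. The substantive condition is $G\circ\iota=G$.

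First I would set up unknowns $Q=(a,b_Q,1)$ and $\lambda=(l_1,l_2,l_3)$, and impose $\lambda(Q)=1$. Then I would expand $G(v-2\lambda(v)Q)-G(v)$ as a quartic form in $x,y,z$ and require all $15$ coefficients to vanish. This gives a polynomial system in five unknowns.

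Geometrically, the solutions are exactly the $21$ involutions of $C_{\overline{\Q}}\cong X(7)$; all of them are reflections in $\PGL_3$, each with an isolated fixed point $Q$ and a fixed line $\lambda=0$. So I expect a zero-dimensional scheme of degree $21$, permuted by Galois as in the motivation above. Eliminating (for instance by a Gr\"obner basis or by resultants) should give the stated degree-$21$ polynomial for $a$. I would then check with \texttt{nfisisom} (or \texttt{polredabs} plus \texttt{nfisisom}) that its stem field is isomorphic to $F$ from Proposition~\ref{prop:F}. Since the degree equals $[F:\Q]$, this also shows that $a$ generates $F$. Finally I would express $b_Q,l_1,l_2,l_3$ as explicit polynomials in $a$ with rational coefficients.

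Verification is then pure algebra in $F$. One checks $\lambda(Q)=1$ and the vanishing of the $15$ coefficients exactly, by reducing modulo $f_F$ with PARI's \texttt{nfinit} arithmetic. The remaining claims follow in turn.
\begin{itemize}
\item $Q\notin C$: check $G(Q)\neq0$ in $F$.
\item Fixed points of $\iota$ on $C$: in $\PP^2$, the fixed points of $\iota$ are $Q$ (eigenvalue $-1$) and the line $\lambda=0$ (eigenvalue $1$). Since $Q\notin C$, the fixed points on $C$ are exactly $C\cap\{\lambda=0\}$.
\item There are four such points: the intersection has multiplicity $4$, and $\iota$ has exactly $4$ fixed points by Riemann--Hurwitz, since $C\to C/\iota$ is a degree-$2$ map from genus $3$ to genus $1$. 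Equivalently, the restriction $g_0$ of $G$ to the line is squarefree.
\item Irreducibility of $g_0$: parametrise the line, restrict $G$ to get the binary quartic $g_0$, and run \texttt{nffactor} over $F$.
\end{itemize}

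The main obstacle is the elimination step. The coefficients are large, as the displayed polynomial shows, and a naive Gr\"obner basis in five variables could be slow. What I would try first is to avoid it entirely. Take a known explicit isomorphism between $C$ and the Klein quartic, or use the known fixed-point structure over the field of $7$-torsion of $E_3$ to identify one involution. Then compute its field of definition directly, and recover $a$ by an LLL or algebraic-dependence search on a high-precision complex approximation. The result can then be certified a posteriori by the exact checks above.
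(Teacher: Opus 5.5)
Your proposal is correct and follows essentially the same route as the paper. The paper also locates $(a,b_Q,l_1,l_2)$ numerically: it runs Newton's method at all $21$ complex roots and interpolates to write $b_Q,l_1,l_2$ as polynomials in $a$ (an earlier implementation used LLL, as you suggest). It then verifies $G\circ\iota=G$ exactly in $F[x,y,z]$, identifies the field of $a$ with \texttt{nfisisom}, uses $G(Q)\neq0$ together with the fixed-point structure of a reflection, and checks $g_0$ with \texttt{nffactor}.

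Two small improvements on the paper. You observe that $\iota^2=\mathrm{id}$ is automatic once $\lambda(Q)=1$, whereas the paper checks it by computation. Also, the distinctness of the four fixed points follows directly from the irreducibility of $g_0$ over $F$, since $F$ has characteristic $0$, so you do not need Riemann--Hurwitz for that step.
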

\begin{proof}
\emph{How $\iota$ was found.} The unknowns $(a,b_Q,l_1,l_2)$ are subject to the $15$ polynomial equations expressing $G(v-2\lambda(v)Q)=G(v)$. We solved these equations numerically over $\C$ by Newton's method. For each of the $21$ complex roots $a$ of the displayed polynomial we found a solution, and we refined all solutions to $500$ digits. Interpolating over the $21$ roots then expresses $b_Q,l_1,l_2$ as polynomials in $a$ with rational coefficients.

\emph{Verification.} The resulting exact elements satisfy $\iota^2=\mathrm{id}$ and $G\circ\iota=G$ \emph{identically} in $F[x,y,z]$, which we checked with exact arithmetic. The displayed polynomial is irreducible and defines a field isomorphic to $F$ (\texttt{nfisisom}), and it has exactly one root in $F$.

\emph{Fixed points.} The fixed points of a reflection $v\mapsto v-2\lambda(v)Q$ on $\PP^2$ are the line $\lambda=0$ and the point $Q$. We checked that $G(Q)\neq0$. So the fixed points on $C$ are the points of $C\cap\{\lambda=0\}$, a divisor of degree $4$. This agrees with the Riemann--Hurwitz formula, which predicts $4$ ramification points for a double cover of a genus-$1$ curve by a genus-$3$ curve.

\emph{Irreducibility.} Irreducibility of $g_0$ over $F$ was checked with \texttt{nffactor}.
\end{proof}

\subsection{The quotient}\label{subsec:quotient}
Put $u=x-az$, $v=y-b_Qz$ and $w=\lambda(x,y,z)$. These are coordinates on $\PP^2_F$ in which $Q=[0:0:1]$, and $\iota(u,v,w)=(u,v,-w)$. Since $G$ is $\iota$-invariant,
\[
G=G_0(u,v)+G_2(u,v)\,w^2+c\,w^4,\qquad c=G(Q)\ne0,
\]
where $G_0$ is a binary quartic form and $G_2$ a binary quadratic form. The fixed points $W$ are the zeros of $G_0$ on the line $w=0$; we put $g_0(X)=G_0(X,1)$. It has degree $4$, so no point of $W$ lies on $v=0$.

Put $s=w^2$. The quotient $C/\iota$ is the curve $cs^2+G_2s+G_0=0$ in weighted projective space, or equivalently
\begin{equation}\label{eq:Eprime}
E':\ Y^2=G_2(u,v)^2-4c\,G_0(u,v),\qquad \varphi(u:v:w)=\bigl(u:v:2cw^2+G_2(u,v)\bigr).
\end{equation}
This is a genus-$1$ curve with $F$-points (the images of the known rational points), hence an elliptic curve over $F$. We put it in Weierstrass form as follows.
\begin{enumerate}
\item Put $X=u/v$ and $y=Y/v^2$, so that $y^2=q(X):=G_2(X,1)^2-4cG_0(X,1)$.
\item Let $X_0=X(b)$ and $q_0=y(b)$; one checks that $q_0\ne0$. Put $S=X-X_0$, so that $y^2=\alpha S^4+\beta S^3+\gamma S^2+\delta S+q_0^2$ for some $\alpha,\beta,\gamma,\delta\in F$.
\item Apply the classical transformation of a quartic with a rational point to Weierstrass form (see, e.g., \cite{Connell}), which we checked symbolically:
\[
x_1=\frac{2q_0(y+q_0)+\delta S}{S^2},\qquad y_1=\frac{4q_0^2(y+q_0)+2q_0(\delta S+\gamma S^2)-\delta^2S^2/(2q_0)}{S^3}.
\]
It maps this model to $E_1: y_1^2+a_1x_1y_1+a_3y_1=x_1^3+a_2x_1^2+a_4x_1+a_6$ with $a_1=\delta/q_0$, $a_2=\gamma-\delta^2/(4q_0^2)$, $a_3=2q_0\beta$, $a_4=-4q_0^2\alpha$ and $a_6=a_2a_4$. The point $(S,y)=(0,q_0)$, i.e.\ the image of $b$, goes to the origin $O$.
\item Finally, apply a change of coordinates $[u_m,r_m,s_m,t_m]$ from $E_1$ to a global minimal model $E_m$ (\texttt{ellminimalmodel}).
\end{enumerate}
The composite is a morphism $\psi\colon C\to E_m$ defined over $F$ with $\psi(b)=O$. We call the explicit formula above \emph{the chart}. It is defined at a point unless $v=0$ or $S=0$ there, with the exception of the point $b$ itself, which goes to $O$.

\begin{proposition}\label{prop:Eprime}
\begin{enumerate}
\item $j(E')=-3375$, so $E'$ has CM by $\Z[\tfrac{1+\sqrt{-7}}{2}]$ and is a quadratic twist over $F$ of $E_0$ = \texttt{49a1}: $y^2+xy=x^3-x^2-2x-1$.
\item $E'$ has a global minimal model $E_m$ over $F$. It has bad reduction exactly at $\fp_{2,2}$, $\fp_{2,3}$ and $\fp_7$, with conductor exponents $4,4,2$, so the conductor has norm $2^8\cdot7^6$. It has good reduction at $\fp_{2,1}$.
\item $E'(F)_{\mathrm{tors}}\cong\Z/2\Z$, generated by a point $T$.
\end{enumerate}
\end{proposition}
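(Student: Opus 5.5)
This is a finite computation over $F$, which is already under exact control by Proposition~\ref{prop:F} and Proposition~\ref{prop:iota}. We follow the same pattern as those proofs: compute explicitly, then certify each output by an argument that does not depend on numerical precision.

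\emph{Part (1).} From the explicit Weierstrass model $E_1$ of \S\ref{subsec:quotient} we compute the $j$-invariant as an element of $F$ in exact arithmetic (\texttt{ellinit} over \texttt{nfinit(f\_F)}) and check that it equals $-3375$. The CM statement is then classical: $-3375=-15^3$ is the $j$-invariant of the maximal order of discriminant $-7$. Since $j\neq 0,1728$, any two curves over $F$ with this $j$-invariant are quadratic twists of each other. Concretely, we compare $c_6(E_m)/c_6(E_0)$ with $c_4(E_m)/c_4(E_0)$ to extract the twisting parameter $d\in F^\times/F^{\times2}$. This amounts to checking that $c_4(E_m)c_6(E_0)/(c_4(E_0)c_6(E_m))$ is a square in $F$, or simply producing $d$ with $c_4(E_m)=d^2c_4(E_0)$ and $c_6(E_m)=d^3c_6(E_0)$.

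\emph{Part (2).} Because $F$ has class number $1$ (Proposition~\ref{prop:F}, certified), a global minimal model exists. We obtain $E_m$ from \texttt{ellminimalmodel} and verify minimality locally. The discriminant of $E_m$ is supported only at primes above $2$ and $7$, which we check by factoring the ideal $(\Delta)$ with \texttt{idealfactor}. At each such prime we run Tate's algorithm (\texttt{elllocalred}) to get the Kodaira type and the conductor exponent. The expected output is exponent $0$ at $\fp_{2,1}$, exponent $4$ at $\fp_{2,2}$ and $\fp_{2,3}$, and exponent $2$ at $\fp_7$. With the residue degrees from Proposition~\ref{prop:F} this gives norm $2^4\cdot2^4\cdot(7^3)^2=2^8\cdot7^6$. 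As a consistency check, a potentially good (CM) curve has conductor exponent $2$ at a prime of residue characteristic at least $5$ with additive reduction, which matches $\fp_7$.

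\emph{Part (3).} This is the step needing genuine care, since torsion over a degree-$21$ field is not read off directly. First we exhibit $T$: the cubic $x^3+\dots$ obtained from the $2$-division polynomial has exactly one root in $F$ (\texttt{nfroots}), so $E'(F)[2]\cong\Z/2\Z$. For the upper bound we reduce at good primes. For primes $\fq$ of good reduction with $\fq\nmid 2$, the torsion subgroup injects into $E_m(\F_\fq)$ (for $\fq$ of small ramification index, which is automatic for unramified $\fq$). We therefore compute $\#E_m(\F_\fq)$ for several degree-one primes $\fq$ above small split-ish rational primes $q\ne2,7$ and take the gcd, aiming for $\gcd=2$. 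Then two things remain: odd $\ell$-torsion is excluded by the gcd, and the full $2$-part is settled by checking that $T$ is not divisible by $2$ in $E'(F)$. The latter holds because the $x$-coordinates of the halvings of $T$ satisfy a quartic with no root in $F$, and $\Z/4$ is also excluded since the gcd has $2$-part exactly $2$.

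I expect the main obstacle to be the gcd actually reaching $2$. CM curves often have $\#E(\F_\fq)$ divisible by extra small primes for structural reasons; for example, $\fq$ may be inert in $\Q(\sqrt{-7})$ inside $F$-related fields, making the group order $N\fq+1$. If the degree-one primes keep giving a common factor, I would switch to primes $\fq$ of higher residue degree, or rule out the specific $\ell$-torsion directly by showing that the $\ell$-division polynomial has no root in $F$.
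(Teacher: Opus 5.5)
Your proposal is correct and is essentially the paper's own argument. Parts (1) and (2) are exact PARI computations (\texttt{ellinit}, \texttt{ellminimalmodel}, \texttt{ellglobalred}), and (3) combines the rational $2$-torsion point with an upper bound from reduction at good primes: the paper uses a degree-one prime above $13$ (order $14$) and a degree-four prime above $3$ (group $(\Z/10)^2$) with prime-to-$q$ injectivity, while your all-degree-one version closes just as quickly, since the degree-one primes above $13$ and $17$ (both inert in $\Q(\sqrt{-7})$) give supersingular reductions of orders $14$ and $18$. One small slip in (1): $c_4(E_m)c_6(E_0)/(c_4(E_0)c_6(E_m))$ being a square would say $E_m\cong E_0$ over $F$, which is not what is claimed; the twist statement follows from $j\neq0,1728$ alone, with twisting parameter $d=c_6(E_m)c_4(E_0)/\bigl(c_6(E_0)c_4(E_m)\bigr)$.
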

\begin{proof}
(1) and (2) are direct computations (\texttt{ellinit}, \texttt{ellminimalmodel}, \texttt{ellglobalred}). A global minimal model exists because $h_F=1$. The minimal discriminant has valuations $0,12,12,3$ at $\fp_{2,1},\fp_{2,2},\fp_{2,3},\fp_7$.

(3) PARI's \texttt{elltors} returns $\Z/2$, and $E'$ has an $F$-rational point $T$ of order $2$. We give an independent proof that the torsion subgroup has order at most $2$. Let $\fq$ be a prime of good reduction of residue characteristic $q$. Then the prime-to-$q$ part of $E'(F)_{\rm tors}$ injects into $\widetilde E'(k_\fq)$ \cite[Proposition~VII.3.1(b)]{SilvermanAEC}. We use two such primes.
\begin{itemize}
\item There is a prime of degree $1$ above $13$ with $\#\widetilde E'(\F_{13})=14$.
\item There is a prime above $3$ with residue field $\F_{81}$ and $\widetilde E'(\F_{81})\cong(\Z/10)^2$.
\end{itemize}
The first prime shows that the prime-to-$13$ part of the torsion has order dividing $14$. The second shows that the prime-to-$3$ part has order dividing $100$. Hence $\#E'(F)_{\rm tors}$ divides $2$.
\end{proof}

\begin{lemma}\label{lem:smooth}
For every prime $q\nmid 14$, the reduction of \eqref{eq:C} modulo $q$ is a smooth plane quartic.
\end{lemma}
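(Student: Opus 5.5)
We want to show that for every prime $q\nmid 14$ the reduction of $G$ modulo $q$ defines a smooth plane quartic over $\overline{\F}_q$. The natural route is to work with the discriminant of ternary quartic forms. A plane quartic $G=0$ over a field $k$ is singular exactly when the partial derivatives $G_x,G_y,G_z$ have a common zero in $\PP^2(\bar k)$. For forms of degree $3$ in three variables (the partials), the Macaulay resultant $\Res(G_x,G_y,G_z)$ is an integer polynomial in the coefficients of $G$. It vanishes modulo $q$ exactly when the reduced partials have a common projective zero over $\overline{\F}_q$.

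If $q\ne 2$, Euler's relation $4G=xG_x+yG_y+zG_z$ shows that a common zero of the partials lies on the curve. Hence for odd $q$, smoothness of $\bar G=0$ is equivalent to $\Res(G_x,G_y,G_z)\not\equiv0\pmod q$. The prime $q=2$ is excluded anyway. Note also that the reduction of $G$ is a nonzero form of degree $4$ for every $q$, since the coefficient of $x^4$ is $1$. So the plan is the following:
\begin{itemize}
\item compute the integer $D=\Res(G_x,G_y,G_z)$ exactly;
\item factor it;
\item check that its only prime divisors are $2,3$ and $7$.
\end{itemize}
The factor $3$ needs separate treatment, as explained below. We expect $D$ to be $\pm2^a3^b7^c$. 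The curve is a twist of the Klein quartic by the mod-$7$ representation of $E_3$, which has conductor $392=2^3\cdot7^2$. So $C$ should have good reduction away from $2$ and $7$, while the normalisation of the discriminant picks up powers of $2$ and $3$.

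The prime $3$ deserves care, because the invariant-theoretic discriminant of ternary quartics involves factors such as $4^{\cdot}$ and $3^{\cdot}$ coming from the derivatives. The Euler argument above, however, needs only $4$ to be invertible. We therefore check $q=3$ directly. We compute the saturation (Gröbner basis) of the ideal $(G,G_x,G_y,G_z)$ over $\F_3$ and verify that it contains a power of $(x,y,z)$; equivalently, that the radical is irrelevant. If $D$ turns out to be divisible by further small primes, we treat each of those primes the same way with a Gröbner basis over $\F_q$. Any such prime must then in fact give a smooth curve, otherwise the lemma would be false.

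The main obstacle is computing $D$ itself. A resultant of three ternary cubics is a degree-$27$ polynomial in the coefficients, so computing it symbolically is hopeless, but evaluating it numerically at one specific integer form is routine. We would do this by elimination rather than by Macaulay matrices:
\begin{enumerate}
\item compute a Gröbner basis of $(G_x,G_y,G_z)$ over $\Q$ and confirm that the ideal is zero-dimensional in $\PP^2$, i.e.\ $C$ is smooth over $\Q$;
\item compute a Gröbner basis over $\Z$ of the ideal $(G_x,G_y,G_z)$ dehomogenised in each of the three affine charts $z=1$, $y=1$, $x=1$;
\item read off the integer in the ideal's intersection with $\Z$ in each chart;
\item take the least common multiple of these three integers.
\end{enumerate}
The primes dividing this least common multiple are exactly the primes at which the reduced partials have a common zero. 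Alternatively, we can compute the univariate resultants $\Res_y(\Res_x(\cdot,\cdot),\Res_x(\cdot,\cdot))$ in each chart and take the gcd of the resulting integers. This gives a multiple of the bad primes, which we then check one at a time modulo $q$ as above. Either way, the result is a finite, checkable computation in PARI/GP or Magma. It shows that the only possible primes of bad reduction are $2$ and $7$.
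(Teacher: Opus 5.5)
Your proposal is correct, and your main route differs from the paper's; your fallback (iterated univariate resultants in each chart, then checking the leftover primes one at a time) is essentially the paper's proof.

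\textbf{How the two routes differ.} The paper works in the three affine charts with the dehomogenised $f,f_X,f_Y$, so it needs no Euler relation. It takes $\Res_X(\Res_Y(\cdot,\cdot),\Res_Y(\cdot,\cdot))$ and their gcd $N$. This gives only a \emph{multiple} of the bad primes, so the spurious primes $3,5,11,19$ must then be eliminated individually over $\F_q$. Your primary route uses Euler's relation (valid since $q\ne2$) to make the criterion exact: for odd $q$, the reduction is singular if and only if $q\mid D=\Res(G_x,G_y,G_z)$. Equivalently, $q$ divides one of the generators $d$ of $I\cap\Z$, where $I=(g_x,g_y,g_z)\subset\Z[X,Y]$ is the dehomogenised ideal in a chart. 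This removes the case analysis of extraneous primes. The price is heavier machinery: a Macaulay-matrix computation, or Gr\"obner bases over $\Z$, which PARI/GP does not provide.

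\textbf{A step you should justify.} You assert without proof that the primes dividing $d$ are exactly the primes at which the reduced partials have a common zero. The nontrivial direction is short. Suppose $d\ne0$, which your step (1) guarantees, and suppose the reduction of $I$ is the unit ideal in $\F_q[X,Y]$. Then $A=\Z[X,Y]/I$ satisfies $A=qA$ and $dA=0$. Writing $d=q^km$ with $q\nmid m$, we get $mA=mq^kA=dA=0$. Hence $m\in I\cap\Z=d\Z$, so $d\mid m$ and $k=0$.

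\textbf{An inconsistency to fix.} By your own equivalence, $D$ cannot contain a harmless factor of $3$. Up to sign, $\Res(G_x,G_y,G_z)=4^{7}\operatorname{Disc}(G)$ for ternary quartics, so the normalisation contributes only powers of $2$. If $3\mid D$, the curve would be singular modulo $3$ and the lemma would be false. So the expected shape is $\pm2^a7^c$. Your separate Gr\"obner checks at $3$ and at ``further small primes'' are redundant for the exact resultant. They are needed only in the iterated-resultant variant, where extraneous primes genuinely occur.

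\textbf{A wording correction.} In step (1), ``zero-dimensional in $\PP^2$'' should read ``defines the empty subscheme of $\PP^2$'' (an irrelevant ideal). A finite nonempty set of common zeros of the partials would not give smoothness.
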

\begin{proof}
Let $q\nmid14$ be a prime and suppose that the reduction has a singular point over $\overline{\F}_q$. It lies in one of the three affine charts $z=1$, $x=1$, $y=1$. In a chart with affine coordinates $(X,Y)$, write $f$ for the dehomogenised form and $f_X,f_Y$ for its partial derivatives. Put
\[
R_1=\Res_Y(f_X,f_Y),\quad R_2=\Res_Y(f_X,f),\quad R_3=\Res_Y(f_Y,f)\in\Z[X],
\]
and $N=\gcd\bigl(\Res_X(R_1,R_2),\Res_X(R_1,R_3),\Res_X(R_2,R_3)\bigr)\in\Z$. If $(X_0,Y_0)$ is a common zero of $f,f_X,f_Y$ modulo $q$, then $X_0$ is a common root of $R_1,R_2,R_3$ modulo $q$. (A resultant vanishes at a specialisation that has a common root, even when leading coefficients drop.) Hence $q\mid N$.

In the three charts, $N$ has prime factors $\{2,5,7\}$, $\{2,5,7,11,19\}$ and $\{2,3,7\}$ respectively. For $q\in\{3,5,11,19\}$ we computed $\gcd(\bar R_1,\bar R_2,\bar R_3)$ in $\F_q[X]$ directly from the reduced polynomials, in each chart.
\begin{itemize}
\item In all cases but one it is constant.
\item In the chart $x=1$ modulo $11$ it has the single root $X_0=4$. There, $\gcd(\bar f(4,Y),\bar f_X(4,Y),\bar f_Y(4,Y))$ is constant, so there is no singular point with $X=4$.
\end{itemize}
So the reduction is smooth for all $q\nmid14$. This agrees with \cite[\S6.2]{FL}, where good reduction outside $\{2,7\}$ is deduced from the fact that $C$ is a twist of $X(7)$ by a cocycle unramified outside $\{2,7\}$.
\end{proof}

\subsection{The map $\psi$}
By the universal property of the Jacobian, $\psi(P)=\psi_*([P]-[b])$ for a homomorphism $\psi_*\colon J\to E'$ defined over $F$. Put
\[
R_1=\psi(P_2),\qquad R_2=\psi(P_3),\qquad R_3=\psi(P_4).
\]

\section{The Mordell--Weil lattice}\label{sec:lattice}

\begin{proposition}\label{prop:lattice}
\begin{enumerate}
\item The points $R_1,R_2,R_3$ are linearly independent in $E'(F)$. The determinant of their complex N\'eron--Tate height matrix is $\approx 5749.65$.
\item $R_2$ and $R_1+R_3$ are divisible by $2$ in $E'(F)$. Let $S_1,S_2$ be points with $2S_1=R_2$ and $2S_2=R_1+R_3$, and let $S_3=R_3$. Put
\[
\Lambda=\langle S_1,S_2,S_3,T\rangle\subset E'(F).
\]
Then $R_1=2S_2-S_3$, $R_2=2S_1$ and $R_3=S_3$.
\item $\Lambda$ is saturated in $E'(F)$ at $\ell=2$ and at $\ell=179$. That is, for these $\ell$ no element of $\Lambda\setminus\ell\Lambda$ is divisible by $\ell$ in $E'(F)$.
\item The classes $D_i=[P_{i+1}]-[b]$ ($i=1,2,3$) are independent in $J(\Q)$. In particular $\operatorname{rk} J(\Q)\ge3$.
\end{enumerate}
\end{proposition}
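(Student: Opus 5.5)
The plan is to prove the four parts in order, treating each as a finite computation in $E'(F)$ backed by a short argument.

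For (1), I will compute the N\'eron--Tate canonical heights $\hat h(R_i)$ and the pairings $\langle R_i,R_j\rangle$ on the minimal model $E_m$ over $F$, using PARI's \texttt{ellheight} with the points reduced to $E_m$ via the chart. The resulting $3\times3$ Gram matrix has determinant $\approx5749.65$. To make this rigorous, I will compute to high precision and bound the numerical error so that the determinant is certified nonzero. Since $\hat h$ is a positive definite quadratic form on $E'(F)/E'(F)_{\rm tors}$, a nonzero determinant gives linear independence modulo torsion, and hence linear independence.

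For (2), I will exhibit $S_1,S_2$ explicitly, either with \texttt{ellisdivisible} or by factoring the $2$-division polynomial at $R_2$ and at $R_1+R_3$ over $F$. I then verify $2S_1=R_2$ and $2S_2=R_1+R_3$ by exact arithmetic. The relations $R_1=2S_2-S_3$, $R_2=2S_1$, $R_3=S_3$ follow by definition.

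For (3), saturation at $\ell$ means that the map $\Lambda/\ell\Lambda\to E'(F)/\ell E'(F)$ is injective. I will use the standard reduction-map criterion, as in Siksek's saturation method. Choose primes $\fq$ of good reduction with $\ell\mid\#\widetilde E'(k_\fq)$, and consider the map
\[
\Lambda/\ell\Lambda\to\prod_\fq \widetilde E'(k_\fq)/\ell\widetilde E'(k_\fq).
\]
If this map is injective, then any $x\in\Lambda$ divisible by $\ell$ in $E'(F)$ maps to zero, hence lies in $\ell\Lambda$. Here $\Lambda\cong\Z^3\times\Z/2$, so $\dim_{\F_2}\Lambda/2\Lambda=4$ and $\dim_{\F_{179}}\Lambda/179\Lambda=3$. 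For $\ell=2$, I would first try the alternative of computing $2$-division directly over $F$: check with \texttt{ellisdivisible} that none of the $15$ nonzero classes $\sum\epsilon_iS_i+\epsilon_4T$ is halvable. This is cleaner and exact. For $\ell=179$, I will find several primes $\fq$ with $179\mid\#\widetilde E'(k_\fq)$ and check that the $3\times k$ matrix of discrete logarithms of the images of $S_1,S_2,S_3$ in the $179$-parts has rank $3$.

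For (4), I will push forward along $\psi_*$. If $\sum n_iD_i=0$ in $J(\Q)$, then applying $\psi_*$ gives $n_1R_1+n_2R_2+n_3R_3=0$ in $E'(F)$, since $\psi(P)=\psi_*([P]-[b])$. Then (1) forces all $n_i=0$, so $\operatorname{rk}J(\Q)\ge3$.

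I expect the main obstacle to be (3) at $\ell=179$. The reason this prime appears is presumably that $179$ divides the index suggested by the regulator, so we need enough degree-one primes of $F$ with $179\mid\#\widetilde E'$; if the primes of small norm do not supply these, larger residue fields may be needed. There is a secondary difficulty in certifying the numerical determinant in (1), but its only role is to give independence, which could alternatively be obtained exactly from reductions of the $R_i$ modulo a few primes.
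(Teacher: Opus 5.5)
Your proposal is correct. For (2), (4) and saturation at $\ell=179$ it follows the paper's argument. The paper takes the $21$ degree-one primes above $p=5581$, where $\widetilde E'(\F_p)\cong\Z/1432\times\Z/4$ with $179\,\|\,1432$, so no search over larger residue fields is needed. The prime $179$ is singled out because $\#\widetilde E'(\F_{5581})=2^5\cdot179$, and the later sieve modulo $5581$ needs saturation at the primes dividing this group order. It has nothing to do with the regulator.

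There are two real differences from the paper.

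In (1), the paper does not certify the numerical determinant; it only reports it. It proves independence exactly from the rank-$3$ computation modulo $179$, which is why it proves (3) before (1). Suppose $\sum a_iS_i$ were torsion with $a\neq0$. Dividing out the largest power of $179$ common to the $a_i$ gives $a'\not\equiv0\pmod{179}$ with $\sum a'_iS_i\in\{O,T\}$. The image of this point in $\bigoplus_{\fp\mid p}\widetilde E'(\F_p)/179$ is then zero, contradicting rank $3$. This is your fallback, and it is preferable to your primary route, since \texttt{ellheight} over a degree-$21$ field gives no rigorous error bound.

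In (3) at $\ell=2$, the paper uses the same reduction criterion as at $179$, rather than testing the $15$ nonzero classes for halvability over $F$. It reduces at primes above $3$, $5$ and $13$, where the images of $S_1,S_2,S_3,T$ span $\F_2^4$. Your direct test is also valid, because every element of $\Lambda\setminus2\Lambda$ differs from one of those $15$ representatives by an element of $2\Lambda$. Your test is an exact if-and-only-if criterion. The reduction criterion is only sufficient, but it gives a certificate that is cheap to recheck.
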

\begin{proof}
(2) The divisibility is a computation with \texttt{ellisdivisible} over $F$.

(3) We use the following criterion. Suppose that for some set of primes $\fq$ of good reduction the reduction map $\Lambda/\ell\Lambda\to\bigoplus_\fq \widetilde E'(k_\fq)/\ell\widetilde E'(k_\fq)$ is injective. Then $\Lambda$ is saturated at $\ell$. Indeed, if $y\in\Lambda\setminus\ell\Lambda$ were of the form $\ell z$ with $z\in E'(F)$, its reduction would lie in $\ell\widetilde E'(k_\fq)$ for every $\fq$.
\begin{itemize}
\item For $\ell=2$, the images of $S_1,S_2,S_3,T$ span a space of dimension $4$ over $\F_2$. We used primes of degrees $4,8,8$ above $3$, three primes of degree $6$ above $5$ and one prime of degree $4$ above $13$.
\item For $\ell=179$, there are $21$ primes $\fp$ of $F$ above $p=5581$, each of degree $1$. At each of them $\widetilde E'(\F_p)\cong\Z/1432\times\Z/4$, and $179\,\|\,1432$. The $21\times3$ matrix over $\F_{179}$ giving the images of $S_1,S_2,S_3$ in $\bigoplus_\fp \widetilde E'(\F_p)/179$ has rank $3$. The point $T$ plays no role for odd $\ell$.
\end{itemize}

(1) The complex height matrix was computed with \texttt{ellheight}. For a proof of independence that does not need error bounds, suppose that $\sum a_iS_i$ is torsion for some $a\in\Z^3\setminus\{0\}$. Divide by the largest power of $179$ dividing all the $a_i$ (the result is still torsion, since $E'(F)_{\rm tors}$ is finite). This gives $a'\not\equiv0\bmod179$ with $\sum a'_iS_i\in\{O,T\}$. The image of this point in $\bigoplus_\fp\widetilde E'(\F_p)/179$ is then $0$, contradicting the rank computation in (3). So $S_1,S_2,S_3$ are independent, and hence so are $R_1,R_2,R_3$.

(4) We have $\psi_*(D_i)=R_i$, and the $R_i$ are independent by (1).
\end{proof}

\begin{remark}
The divisibility in (2) matches the relations
\[
2[R-3P_0]=3[D_2]+6[D_3],\qquad 2[D_4]=-3[D_1]+3[D_3]
\]
in $J(\Q)$ found in \cite[Remark~6.3]{FL}. There, $P_0,P_1,P_2,P_3$ denote our $b,P_2,P_3,P_4$ and $D_i=(P_i)-(P_0)$ for $i=1,2,3$. Applying $\psi_*$ gives $3R_2\in 2E'(F)$ and $-3R_1+3R_3\in2E'(F)$, hence $R_2,\ R_1+R_3\in 2E'(F)$.
\end{remark}

\begin{corollary}\label{cor:inLambda}
Assume $\operatorname{rk}J(\Q)\le3$, and let $P\in C(\Q)$. Then there is an integer $m\ge1$, not divisible by $2$ or $179$, such that $m\,\psi(P)\in\Lambda$.
\end{corollary}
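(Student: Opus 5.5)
The plan is to work entirely inside $J(\Q)$ and push forward along $\psi_*$. Let $P\in C(\Q)$ and put $D=[P]-[b]\in J(\Q)$; then $\psi(P)=\psi_*(D)$. By Proposition~\ref{prop:lattice}(4) the classes $D_1,D_2,D_3$ are independent in $J(\Q)$. Under the assumption $\operatorname{rk}J(\Q)\le3$, the rank is exactly $3$, so $D,D_1,D_2,D_3$ are dependent. This gives integers $n,n_1,n_2,n_3$ with $n\neq0$ and
\[
nD=n_1D_1+n_2D_2+n_3D_3+\tau,
\]
where $\tau$ is torsion in $J(\Q)$. Multiplying by the order of $\tau$ removes it. Applying $\psi_*$ then gives $N\psi(P)\in\langle R_1,R_2,R_3\rangle\subset\Lambda$ for some integer $N\ge1$, using Proposition~\ref{prop:lattice}(2) for the inclusion. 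The rest of the argument strips the factors $2$ and $179$ from $N$.

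First I would set up the saturated-lattice formalism. Write $N=2^{e}179^{f}m$ with $\gcd(m,2\cdot179)=1$, and put $Y=m\psi(P)\in E'(F)$. Then $2^{e}179^{f}Y\in\Lambda$. I claim that $\ell^kY\in\Lambda$ implies $\ell^{k-1}Y\in\Lambda$ for $\ell\in\{2,179\}$ and $k\ge1$. The claim is proved by induction on the exponents, treating one prime at a time; the two primes do not interfere because $\Lambda$ is a subgroup.

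To prove the claim, put $Z=\ell^{k-1}Y$ and $y=\ell Z\in\Lambda$. If $y\in\ell\Lambda$, say $y=\ell\lambda$, then $\ell(Z-\lambda)=O$, so $Z-\lambda$ is torsion in $E'(F)$. By Proposition~\ref{prop:Eprime}(3), $Z-\lambda\in\{O,T\}$, and $T\in\Lambda$, so $Z\in\Lambda$. If instead $y\notin\ell\Lambda$, then $y$ is an element of $\Lambda\setminus\ell\Lambda$ divisible by $\ell$ in $E'(F)$. This contradicts the saturation statement of Proposition~\ref{prop:lattice}(3). Iterating the claim gives $Y=m\psi(P)\in\Lambda$, as required.

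I expect no real obstacle here. The only subtlety is the torsion step in the case $\ell=2$: an element $Z$ with $2Z\in\Lambda$ could a priori differ from an element of $\Lambda$ by $2$-torsion. This is exactly why $T$ was included in $\Lambda$ and why we need the full torsion computation $E'(F)_{\rm tors}=\{O,T\}$. For $\ell=179$, the torsion term is automatically trivial.
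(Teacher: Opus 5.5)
Your proposal is correct and follows essentially the same route as the paper: the rank assumption gives $N\psi(P)\in\langle R_1,R_2,R_3\rangle\subset\Lambda$, and the factors $2$ and $179$ are then removed using $E'(F)_{\rm tors}=\{O,T\}\subset\Lambda$ together with saturation at $\ell$, with the same two-case argument. The difference is only in packaging: the paper takes $m=[\Sat(\Lambda):\Lambda]$ and shows this index is prime to $2\cdot179$, which gives an $m$ independent of $P$, whereas you remove one power of $\ell$ at a time and take $m$ to be the prime-to-$2\cdot179$ part of $N$.
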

\begin{proof}
By Proposition~\ref{prop:lattice}(4) and the assumption, the classes $D_1,D_2,D_3$ generate a subgroup of finite index of $J(\Q)$. So for $x=[P]-[b]\in J(\Q)$ there is $N\ge1$ with $Nx\in\sum\Z D_i$. Hence $N\psi(P)=\psi_*(Nx)\in\langle R_1,R_2,R_3\rangle\subset\Lambda$.

So $\psi(P)$ lies in the saturation $\Sat(\Lambda)=\{Q\in E'(F): NQ\in\Lambda\text{ for some }N\ge1\}$. This is a finitely generated group containing $\Lambda$ with finite index; put $m=[\Sat(\Lambda):\Lambda]$. We show that $m$ is prime to $\ell=2$ and to $\ell=179$.

Suppose $\ell\mid m$. Then there is $Q\in\Sat(\Lambda)\setminus\Lambda$ with $\ell Q\in\Lambda$. Suppose first that $\ell Q\in\ell\Lambda$, say $\ell Q=\ell\mu$ with $\mu\in\Lambda$. Then $Q-\mu$ is torsion, so $Q-\mu\in\{O,T\}\subset\Lambda$, which is impossible. So $\ell Q\in\Lambda\setminus\ell\Lambda$ is divisible by $\ell$ in $E'(F)$, contradicting Proposition~\ref{prop:lattice}(3).
\end{proof}

So for $P\in C(\Q)$ we can write
\begin{equation}\label{eq:npcoords}
\psi(P)=n_1S_1+n_2S_2+n_3S_3+n_4T,\qquad n_i\in\Z_{(2\cdot179)}.
\end{equation}
Here $\Z_{(2\cdot179)}$ denotes the rationals whose denominators are prime to $2\cdot179$. The vector $(n_1,n_2,n_3)$ is unique, and $n_4$ is well defined modulo $2$. For the known points,
\[
n(b)=(0,0,0,0),\quad n(P_2)=(0,2,-1,0),\quad n(P_3)=(2,0,0,0),\quad n(P_4)=(0,0,1,0).
\]

\section{A Mordell--Weil sieve through the elliptic quotient}\label{sec:sieve}

The Mordell--Weil sieve \cite{Scharaschkin, Flynn04, BruinStoll} compares the reductions of rational points with the reductions of the Mordell--Weil group. Computing in $J(\F_q)$ for a non-hyperelliptic genus-$3$ curve is not convenient. Instead we compare in the groups $\widetilde E'(k_\fq)$ for the primes $\fq$ of $F$ above $q$, which only involves elliptic curves.

\subsection{Compatibility with reduction}
Let $q\nmid14$ be a prime and $\fq\mid q$ a prime of $F$ with residue field $k_\fq$. By Proposition~\ref{prop:Eprime} and Lemma~\ref{lem:smooth}, both $C$ and $E_m$ have good reduction at $\fq$. Write $A_\fq=\widetilde E_m(k_\fq)$ and $\bar P\in C(\F_q)$ for the reduction of $P\in C(\Q)$.

\begin{lemma}\label{lem:redcompat}
Suppose the following hold:
\begin{itemize}
\item every constant in the chart for $\psi$ lies in the local ring $\cO_{F,\fq}$;
\item $q_0$ and $u_m$ are units at $\fq$;
\item after reducing the constants modulo $\fq$, every denominator in the chart is nonzero at $\bar P$.
\end{itemize}
Then $\psi(P)\bmod\fq$ equals the value at $\bar P$ of the reduced chart.
\end{lemma}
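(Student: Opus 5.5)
The plan is to regard the chart as a single rational map defined over the local ring and to show that it commutes with reduction. Concretely, $\psi$ is the composite of several steps. First the linear change of variables $(x,y,z)\mapsto(u,v,w)$, whose coefficients $a,b_Q,l_1,l_2,l_3$ are constants of the chart. Then the quotient formula for $Y$. Then the passage to $(X,y)$ and $S=X-X_0$. Then the Connell transformation to $(x_1,y_1)$. Finally the change of coordinates $[u_m,r_m,s_m,t_m]$ to $E_m$. Each coordinate of $\psi(P)$ on $E_m$ is therefore given by an explicit expression
\[
x_m=\frac{A(x,y,z)}{B(x,y,z)},\qquad y_m=\frac{A'(x,y,z)}{B'(x,y,z)},
\]
where $A,B,A',B'$ are polynomials whose coefficients are polynomial expressions in the chart constants. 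These constants are $a,b_Q,l_i,c,X_0,q_0,\alpha,\dots,\delta,r_m,s_m,t_m$, together with the inverses $q_0^{-1}$ and $u_m^{-1}$.

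\textbf{Step 1.} By the first two hypotheses, all of these coefficients lie in $\cO_{F,\fq}$. So the numerator and denominator polynomials have $\fq$-integral coefficients.

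\textbf{Step 2.} Choose primitive integral coordinates for $P\in C(\Q)$, so that the reduction $\bar P$ is well defined. Evaluating each intermediate quantity at $P$ gives an element of $\cO_{F,\fq}$. Reducing that element modulo $\fq$ gives the same quantity computed with the reduced constants at $\bar P$, since reduction $\cO_{F,\fq}\to k_\fq$ is a ring homomorphism. By the third hypothesis, every denominator encountered is nonzero at $\bar P$, hence a unit in $\cO_{F,\fq}$. So each quotient is $\fq$-integral, and its reduction is the reduced quotient. Inducting along the composite, the affine point $(x_m,y_m)\in E_m(F)$ is $\fq$-integral, and its reduction equals the value of the reduced chart at $\bar P$.

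\textbf{Step 3.} This step matches the result with the definition of reduction on $E_m$. Since $E_m$ is a minimal model with good reduction at $\fq$, reduction $E_m(F)\to\widetilde E_m(k_\fq)$ sends an integral affine point to the point with the reduced coordinates. So $\psi(P)\bmod\fq$ is exactly the reduced chart value. The exceptional point $b$ is handled separately: there $\psi(b)=O$ by construction. One can either note that the statement for $b$ is trivial, or check that the reduced chart also sends $\bar b$ to $O$.

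\textbf{Main obstacle.} The delicate point is Step 2 at the level of projective representatives. The chart involves $u/v$ and $Y/v^2$, and $Y$ is only defined up to sign on the quotient. I would handle this by always working with the specific formula $Y=2cw^2+G_2(u,v)$ from \eqref{eq:Eprime}, which fixes the sign. I would also observe that $X=u/v$ and $y=Y/v^2$ are invariant under scaling $(x,y,z)$. So the reductions computed from the primitive representative are independent of choices once the denominator $v$ is a unit, which is one of the assumed nonvanishing conditions.
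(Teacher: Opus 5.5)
Your proposal is correct and takes the same route as the paper. You evaluate the chart at a primitive integral representative, use that the constants are $\fq$-integral and the denominators are units so that reduction (a ring homomorphism) commutes with evaluation, and then identify the reduction of the resulting integral affine point on the good-reduction model $E_m$ with the reduced chart value; this spells out the paper's two-line proof step by step, with one small addition: the factor $1/(2q_0)$ in the Connell formula also needs $2$ to be a unit at $\fq$, which holds because $q\nmid 14$.
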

\begin{proof}
All quantities in the formula evaluated at $P$ lie in $\cO_{F,\fq}$, and all denominators are units there. So reduction, which is a ring homomorphism, commutes with the evaluation. The resulting point has integral affine coordinates, and its reduction is the reduction of $\psi(P)$ on the smooth model.
\end{proof}

If the chart is not defined at $\bar P$ for a given $\fq$, we simply do not use $\fq$ for that $\bar P$. This only weakens the sieve, so it is safe.

\subsection{The sieve modulo $16$}
Fix $N=16$. For each usable prime $\fq$ we represent the finite abelian group $A_\fq/NA_\fq$ explicitly. By \eqref{eq:npcoords} and Lemma~\ref{lem:redcompat}, every $P\in C(\Q)$ with coordinates $n$ satisfies, in $A_\fq/NA_\fq$,
\begin{equation}\label{eq:sievecond}
\sum_{i=1}^{4}n_i\,(S_i\bmod\fq)\ \equiv\ \widetilde\psi(\bar P)\pmod{NA_\fq}.
\end{equation}
To see this, multiply the relation $m\psi(P)=\sum (mn_i)S_i$ (which has integer coefficients) by the inverse of $m$ modulo $16$. Since $n\in\Z_{(2)}^4$, the class of $n$ modulo $N$ is well defined.

We say that a class $\bar n\in(\Z/N)^3\times\Z/2$ \emph{survives at $q$} if there is a point $\bar P\in C(\F_q)$ such that \eqref{eq:sievecond} holds for all usable $\fq\mid q$. The same $\bar P$ must be used for all $\fq\mid q$, because $P$ is a rational point.

\begin{proposition}\label{prop:sieve16}
Let $Q_{\rm sieve}$ be the set of the $28$ primes $3\le q\le113$ with $q\ne7$. The only classes $\bar n\in(\Z/16)^3\times\Z/2$ that survive at every $q\in Q_{\rm sieve}$ are
\[
(0,0,0,0),\ (2,0,0,0),\ (0,0,1,0),\ (0,2,15,0),
\]
which are the classes of $b,P_3,P_4,P_2$.
\end{proposition}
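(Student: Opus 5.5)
This is a finite computation: run the Mordell--Weil sieve through the quotient $E'$, one prime at a time. The plan is to keep a set $\mathcal{S}\subseteq(\Z/16)^3\times\Z/2$ of candidate classes, start with all $2\cdot16^3=8192$ classes, and intersect it with the set of classes that survive at $q$ for each $q\in Q_{\rm sieve}$ in turn.

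For a fixed $q\in Q_{\rm sieve}$ the steps are as follows.
\begin{enumerate}
\item Factor $q\cO_F$ with \texttt{idealprimedec}.
\item For each $\fq\mid q$, check the three hypotheses of Lemma~\ref{lem:redcompat} that do not depend on $\bar P$: the chart constants are $\fq$-integral, and $q_0$ and $u_m$ are $\fq$-units. Any $\fq$ failing these is discarded. This is safe, since discarding primes only weakens the sieve.
\item Compute the group $A_\fq=\widetilde E_m(k_\fq)$ with explicit generators (\texttt{ellgroup} over the residue field), and form $A_\fq/16A_\fq$ as a product of cyclic groups. Record the images of $S_1,S_2,S_3,T$ there.
\item Enumerate $C(\F_q)$ by brute force over $\PP^2(\F_q)$. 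This is feasible since $q\le113$ and, by Lemma~\ref{lem:smooth}, the reduction is a smooth quartic.
\item For each $\bar P$ and each usable $\fq$, evaluate the reduced chart at $\bar P$. Where a denominator vanishes, drop that $\fq$ for that $\bar P$ only, as allowed after Lemma~\ref{lem:redcompat}. This gives the tuple of targets $\widetilde\psi(\bar P)\bmod 16A_\fq$.
\item A class $\bar n$ survives at $q$ exactly when some $\bar P$ has all its usable-$\fq$ congruences \eqref{eq:sievecond} satisfied simultaneously. Compute this as the union over $\bar P$ of the sets of $\bar n$ solving the linear system for that $\bar P$.
\end{enumerate}
The key point is to take the union over $\bar P$ and then intersect across $q$, never the other way round. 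Within one $q$ the same $\bar P$ must serve every $\fq\mid q$, because $P$ is rational.

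To solve the linear system efficiently, I will precompute, for each $q$, the homomorphism
\[
(\Z/16)^3\times\Z/2\to\prod_{\fq\text{ usable}}A_\fq/16A_\fq
\]
given by the images of $S_1,S_2,S_3,T$. The solution set for each $\bar P$ is then either empty or a coset of the kernel. I will store $\mathcal S$ as an explicit list or bitmap of at most $8192$ elements and intersect.

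The final check has two parts.
\begin{itemize}
\item Confirm that the four listed classes survive at every $q$. This follows because they are the classes of the actual points $b,P_3,P_4,P_2$, using the coordinates $n(\cdot)$ listed after \eqref{eq:npcoords} and $-1\equiv15$. Verifying it in the program is also a consistency test of the chart reduction.
\item Confirm that after all $28$ primes nothing else remains.
\end{itemize}

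The main obstacle is not the logic but making each step certifiably correct. The residue fields $k_\fq$ can have large degree (up to $21$). So the group structure of $A_\fq$, and discrete-log-style expressions of the $S_i$ modulo $16A_\fq$, must be computed exactly; in practice only the $2$-primary part $A_\fq[2^\infty]$ matters. The chart evaluation must also be done by reducing elements of $F$ modulo $\fq$ correctly through \texttt{nfmodpr}. Since the stated result is purely the output of this computation, I would run it with two independent implementations and record, for each $q$, the number of surviving classes, so that the shrinking of $\mathcal S$ can be checked.
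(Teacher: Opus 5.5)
Your procedure is the same computation the paper runs: start from all $8192$ classes, take the union over $\bar P\in C(\F_q)$ of the classes satisfying \eqref{eq:sievecond} at the usable $\fq\mid q$, and intersect over $q$. The paper records $2048,1024,5,4$ survivors after $q=3,5,23,53$. However, step~5 as you state it makes the sieve vacuous because of the point $\bar b$. Since $S(b)=0$ exactly, the chart denominators $S^2,S^3$ vanish at $\bar b$ modulo \emph{every} $\fq$. Your rule ``where a denominator vanishes, drop that $\fq$ for that $\bar P$'' therefore discards all primes at $\bar b$. As $b$ is a rational point, $\bar b\in C(\F_q)$ for every $q$, so the class set attached to $\bar b$ is all of $(\Z/16)^3\times\Z/2$. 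The union over $\bar P$ is then everything, nothing is ever eliminated, and your final check would report $8192$ survivors. Lemma~\ref{lem:redcompat} does not help here, because its third hypothesis fails at $\bar b$.

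What you need is that every $P\in C(\Q)$ with $\bar P=\bar b$ satisfies $\psi(P)\equiv O\pmod{\fq}$. Then the target at $\bar b$ is $0\in A_\fq/16A_\fq$ for every usable $\fq$.
\begin{itemize}
\item \emph{Via the N\'eron mapping property.} $\psi$ extends to a morphism from the smooth model of $C$ over $\cO_{F,\fq}$ to the smooth model of $E_m$, as in Step~1 of the proof of Lemma~\ref{lem:goodplaces}. So the reduction of $\psi(P)$ depends only on $\bar P$, and at $\bar b$ it is the reduction of $\psi(b)=O$.
\item \emph{Directly.} We have $v_\fq(S(P))>0$, while $y(P)+q_0\equiv 2q_0$ is a unit. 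So $v_\fq(x_1(P))=-2v_\fq(S(P))<0$ on the integral model $E_1$, and the change of coordinates with unit $u_m$ preserves reduction to $O$.
\item \emph{Projectively.} Evaluated in projective coordinates, the chart gives $[0:8\bar q_0^3:0]=\bar O$ at $\bar b$.
\end{itemize}
This is the convention ``$b\mapsto O$'' built into the paper's chart. The appendix table records no undefined pairs at $q=3$, although $\bar b\in C(\F_3)$, so $\bar b$ is not dropped there.

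With this correction your plan coincides with the paper's. One smaller point: when $\fq$ is dropped only for a particular $\bar P$, the solution set for that $\bar P$ is a coset of the kernel of the map to the product over the $\fq$ that remain for that $\bar P$. It is not a coset of the kernel of your precomputed map.
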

\begin{proof}
This is a computation. We start from all $8192$ classes and apply the primes in increasing order. The numbers of survivors after $q=3,5,23,53$ are $2048$, $1024$, $5$ and $4$, and the number is unchanged by the other primes. The group structures used are listed in Appendix~\ref{app:data}.
\end{proof}

\subsection{The residue classes modulo $p=5581$}
The prime $p=5581$ is the smallest prime that splits completely in $F$; the next two are $15107$ and $17539$. Since $p$ splits completely in $F$, it also splits completely in the Galois closure of $F$.

Moreover, the quartic $g_0$ defining the fixed points $W$ splits into distinct linear factors modulo each of the $21$ primes $\fp\mid p$. This condition is not automatic: it fails for $15107$ and $17539$. We have $\#C(\F_p)=6020$, and $\widetilde E'(\F_p)\cong\Z/1432\times\Z/4$ at each $\fp\mid p$.

\begin{proposition}\label{prop:sievep}
Let $P\in C(\Q)$. Then $\bar P\in C(\F_{p})$ is one of $\bar b,\bar P_2,\bar P_3,\bar P_4$, and $n(P)\equiv n(P_i)\pmod{16}$ for the corresponding known point $P_i$.
\end{proposition}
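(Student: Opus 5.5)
The proof combines Proposition~\ref{prop:sieve16} with one more round of the sieve \eqref{eq:sievecond}, carried out at the $21$ primes $\fp\mid p$ with $p=5581$. The point is that the exponent of $A_\fp=\widetilde E'(\F_p)\cong\Z/1432\times\Z/4$ is $1432=8\cdot179$, and $2\cdot179$ is exactly the set of primes at which $\Lambda$ was shown to be saturated. By Corollary~\ref{cor:inLambda} the coordinates $n(P)$ of \eqref{eq:npcoords} lie in $\Z_{(2\cdot179)}$. So the class of $n(P)$ modulo $1432$ is well defined, and, exactly as in the derivation of \eqref{eq:sievecond}, every $P\in C(\Q)$ satisfies
\[
\sum_{i=1}^4 n_i\,(S_i\bmod\fp)=\widetilde\psi(\bar P)\quad\text{in } A_\fp
\]
at every $\fp\mid p$ where the chart is usable. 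By the Chinese remainder theorem, $A_\fp=A_\fp[8^\infty]\oplus A_\fp[179]$, and I would treat the two components separately.

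\emph{The $2$-primary part.} By Proposition~\ref{prop:sieve16}, $n(P)\bmod 16$ is one of the four classes of $b,P_2,P_3,P_4$. Since $A_\fp[2^\infty]\cong\Z/8\times\Z/4$ is killed by $8$, each of these four classes determines the projection $\pi_2(\widetilde\psi(\bar P))\in A_\fp[2^\infty]$ at every $\fp$.

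\emph{The $179$-part.} Fix a generator of the cyclic group $A_\fp[179]$ at each $\fp$ and take discrete logarithms. The condition becomes a system of $21$ linear equations over $\F_{179}$ in the three unknowns $n_1,n_2,n_3\bmod179$; the coefficient of $T$ vanishes in this component. Its coefficient matrix is the $21\times3$ matrix of Proposition~\ref{prop:lattice}(3), which has rank $3$. Hence for a given $\bar P$ the system has at most one solution, and generically none, since it has $21$ equations in $3$ unknowns.

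\emph{The computation.} I would enumerate all $6020$ points $\bar P\in C(\F_p)$. For each one I compute $\widetilde\psi(\bar P)$ at all $21$ primes $\fp$ via the reduced chart, justified by Lemma~\ref{lem:redcompat}. I then test two things:
\begin{itemize}
\item whether the $2$-primary projections agree with one of the four admissible classes modulo $16$, using the same class at every $\fp$;
\item whether the $179$-system is consistent.
\end{itemize}
The expected output is that only $\bar b,\bar P_2,\bar P_3,\bar P_4$ survive, each surviving only together with its own class modulo $16$. This would give both assertions of Proposition~\ref{prop:sievep}.

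\emph{Main obstacle.} The hard step is the usability of the chart. Some $\bar P$ have $v=0$ or $S=0$ at some $\fp$, so that prime gives no condition for that $\bar P$. I would first check that every $\bar P$ is usable at enough primes for the two tests to remain decisive. The $179$-test only needs $3$ independent equations, and a fourth to exclude a point, so losing a few primes should be harmless. For any leftover $\bar P$, I would evaluate $\psi$ through an alternative chart. One option is to compose with the translation-by-$T$ or negation symmetry of $E'$. Another is to use a second base point, such as $P_2$ in place of $b$, and correct by the known value $R_1$. I would also verify that $q_0$, $u_m$ and the chart constants are $\fp$-integral units where required.

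As a sanity check, each known point must pass both tests with its known $n$. This checks the discrete-logarithm bookkeeping and the identification of $S_i\bmod\fp$.
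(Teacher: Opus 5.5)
Your proposal is correct and is essentially the paper's proof. The paper also takes the class $n(P)\equiv n(P_i)\pmod{16}$ from Proposition~\ref{prop:sieve16} and writes $n(P)=n(P_i)+16m'$. For each of the $6020$ points $\bar P$ and each $i$, it tests solvability with \texttt{matsolvemod} of $16\sum_j m'_j\bar S_j=\widetilde\psi_\fp(\bar P)-\sum_j n(P_i)_j\bar S_j$ in $\Z/1432\times\Z/4$ at all $21$ primes $\fp\mid 5581$, and finds that only the four pairs $(\bar P_i,P_i)$ survive. Your CRT split into a $2$-primary consistency test and an $\F_{179}$-linear system is just an explicit unpacking of that same system: since $16$ kills $\Z/8\times\Z/4$, the $2$-primary part of the paper's system reduces to exactly your test. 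The chart-usability issue you flag turned out to be harmless: the chart is undefined at only $36$ of the $6020\cdot21$ pairs $(\bar P,\fp)$, and the paper simply drops those equations, so no alternative chart was needed.
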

\begin{proof}
By Proposition~\ref{prop:sieve16}, $n(P)\equiv n(P_i)\pmod{16}$ for some known point $P_i$; we write $n(P)=n(P_i)+16m'$ with $m'\in\Z_{(2\cdot179)}^4$.

For each of the $6020$ points $\bar P\in C(\F_p)$ and each $i$, we asked whether the following system has a solution $m'\in\Z^4$: for every prime $\fp\mid p$ at which the chart is defined at $\bar P$,
\[
16\sum_j m'_j\,\bar S_j\ =\ \widetilde\psi_\fp(\bar P)-\sum_j n(P_i)_j\,\bar S_j\quad\text{in }\widetilde E'(\F_p)\cong\Z/1432\times\Z/4 .
\]
Among the $6020\cdot21$ pairs $(\bar P,\fp)$, the chart is undefined at $36$; these equations were omitted. We solved the systems with \texttt{matsolvemod}. A solution exists exactly for the four pairs $(\bar b,b)$, $(\bar P_2,P_2)$, $(\bar P_3,P_3)$, $(\bar P_4,P_4)$.

The true coordinates give a solution with $m'\in\Z_{(2\cdot179)}^4$. Multiplying by an integer $m$ prime to $2\cdot179$ that clears denominators, and then by the inverse of $m$ modulo $1432\cdot4$ (the exponent of the groups involved only has the prime factors $2$ and $179$), gives an integral solution. So $\bar P=\bar P_i$.
\end{proof}

\section{$p$-adic heights with an id\`ele class character}\label{sec:heights}

In this section $K$ is a number field with class number $1$, $p\ge5$ is a prime, and $E/K$ is an elliptic curve given by a Weierstrass model with coefficients in $\cO_K$ with the following properties:
\begin{itemize}
\item it has good \emph{ordinary} reduction at every prime above $p$;
\item it is minimal, with good reduction, at every prime outside a finite set $S$ of primes not dividing $p$.
\end{itemize}
We assume that $S$ contains all primes of bad reduction and all primes above $2$; it may contain more primes. We need the cases $K=F$ and $K=F(w_1)$, in which every prime above $p$ has degree $1$.

\subsection{Id\`ele class characters}
A continuous homomorphism $\chi\colon\A_K^\times/K^\times\to\Q_p$ decomposes as a sum $\chi=\sum_v\chi_v$ of local components $\chi_v\colon K_v^\times\to\Q_p$.
\begin{itemize}
\item At an archimedean place, $\chi_v=0$. Indeed, $\R_{>0}$ and $\C^\times$ are connected, $\Q_p$ is totally disconnected, and $\Q_p$ has no torsion.
\item If $v\nmid p$ is finite, then $\chi_v$ vanishes on $\cO_v^\times$. This group is the product of a finite group and a pro-$q$ group with $q\ne p$, and neither has a nonzero continuous homomorphism to $\Q_p$. So $\chi_v(x)=v(x)\,\chi_v(\pi_v)$ for a uniformiser $\pi_v$.
\item If $v\mid p$ and $K_v=\Q_p$, then $\chi_v(u)=c_v\log(u)$ on $\Z_p^\times$ for a constant $c_v\in\Q_p$, where $\log$ is the $p$-adic logarithm.
\item (Product formula.) $\sum_v\chi_v(\alpha)=0$ for every $\alpha\in K^\times$.
\end{itemize}

\begin{lemma}\label{lem:chars}
Assume $h_K=1$ and that $p$ splits completely in $K$. For each prime $v$ choose a generator $\pi_v$ of the ideal $v$. A vector $(c_v)_{v\mid p}\in\Q_p^{[K:\Q]}$ comes from a (unique) continuous id\`ele class character if and only if
\[
\sum_{v\mid p}c_v\log_v(\varepsilon)=0\qquad\text{for all }\varepsilon\in\cO_K^\times,
\]
where $\log_v$ is the logarithm of the image of $\varepsilon$ in $K_v=\Q_p$. In that case, for every prime $v$,
\[
\chi_v(\pi_v)=-\sum_{w\mid p,\ w\ne v}c_w\log_w(\pi_v).
\]
\end{lemma}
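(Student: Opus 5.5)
We use the structure theorem for the idèle class group of a class-number-one field. Put $U=\prod_{v\nmid\infty}\cO_v^\times\times\prod_{v\mid\infty}K_v^\times$. Since $h_K=1$, we have $\A_K^\times=K^\times\cdot U$ and $K^\times\cap U=\cO_K^\times$. Hence
\[
\A_K^\times/K^\times\cong U/\cO_K^\times ,
\]
where $\cO_K^\times$ sits diagonally in $U$. A continuous character $\chi$ of $\A_K^\times/K^\times$ is therefore the same as a continuous homomorphism $\chi_U\colon U\to\Q_p$ that kills $\cO_K^\times$. By the observations listed before the lemma, $\chi_U$ kills the archimedean factors and every $\cO_v^\times$ with $v\nmid p$. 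Since $K_v=\Q_p$ for $v\mid p$, on $\cO_v^\times=\Z_p^\times$ it is $c_v\log$. So $\chi_U$ is determined by the vector $(c_v)$, via $\chi_U(u)=\sum_{v\mid p}c_v\log(u_v)$. This map is continuous, because only finitely many components are involved and $\log$ is continuous.

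\emph{Necessity and sufficiency.} If $\chi$ exists, then applying $\chi_U$ to the diagonal image of a unit $\varepsilon$ gives $0=\sum_{v\mid p}c_v\log_v(\varepsilon)$, since the other components vanish. Conversely, given $(c_v)$ with this vanishing, define $\chi_U$ by the formula above. It kills $\cO_K^\times$, so it descends to a continuous character on $U/\cO_K^\times\cong\A_K^\times/K^\times$. For continuity of the descended map I will note that $U$ is open in $\A_K^\times$, so $U/\cO_K^\times$ is an open subgroup of the quotient, and the isomorphism above is a homeomorphism. Uniqueness holds because any character with these $c_v$ restricts on $U$ to the same map, and $U$ surjects onto the quotient.

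\emph{Formula for $\chi_v(\pi_v)$.} This is the only step needing care. Apply the product formula to $\alpha=\pi_v\in K^\times$, giving $\sum_w\chi_w(\pi_v)=0$. For a finite $w\nmid p$ with $w\ne v$, $\pi_v$ is a unit at $w$, so $\chi_w(\pi_v)=0$. The archimedean terms vanish. For $w\mid p$ with $w\ne v$, $\pi_v\in\cO_w^\times$, so $\chi_w(\pi_v)=c_w\log_w(\pi_v)$. What remains is $\chi_v(\pi_v)+\sum_{w\mid p,\,w\ne v}c_w\log_w(\pi_v)=0$, which is the claimed identity.

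This argument treats $v\nmid p$ and $v\mid p$ uniformly. When $v\mid p$, the value $\chi_v(\pi_v)$ is not given by the $c_v$ alone; it is defined via the local component on $K_v^\times=\pi_v^{\Z}\times\Z_p^\times$, and the same computation determines it. The main obstacle is only bookkeeping: checking that the $h_K=1$ decomposition is compatible with the topology, and that the local components of the descended character are exactly the $c_v\log$ on units. Both follow from the explicit construction.
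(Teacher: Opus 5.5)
Your proposal is correct and follows the same route as the paper: use $h_K=1$ to write $\A_K^\times=K^\times\cdot U$ with $K^\times\cap U=\cO_K^\times$, so that a character is determined on $U$ by the $c_v$ and exists exactly when it kills $\cO_K^\times$; then read off $\chi_v(\pi_v)$ from the product formula applied to $\pi_v$. Your extra remark on continuity, that $U$ is open so the descended map is continuous, spells out a point the paper leaves implicit.
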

\begin{proof}
This is \cite[Lemmas~2.2 and~2.3]{BBBM}, specialised to $h_K=1$; we recall the argument. Since $h_K=1$ we have $\A_K^\times=K^\times\cdot U$, where $U=\prod_v\cO_v^\times\times K_\infty^\times$. So a character is determined by its restriction to $U$, which is given by the $c_v$.

Conversely, the $c_v$ define a continuous character of $U$. It extends to $\A_K^\times$ trivially on $K^\times$ if and only if it vanishes on $K^\times\cap U=\cO_K^\times$; this is the displayed condition. The value $\chi_v(\pi_v)$ is then forced by $\chi(\pi_v)=0$: the global element $\pi_v$ is a unit at all finite places $w\ne v$, and the archimedean components vanish.
\end{proof}

\subsection{The $p$-adic sigma function}\label{subsec:sigma}
Let $\cE/\Z_p$ be a Weierstrass model $y^2+a_1xy+a_3y=x^3+a_2x^2+a_4x+a_6$ with good ordinary reduction, $p\ge5$. Let $\omega=dx/(2y+a_1x+a_3)$ be its invariant differential, $t=-x/y$ the parameter of the formal group, and $z=\int\omega\in t+t^2\Q_p[[t]]$ the formal logarithm.

Mazur and Tate \cite{MazurTate91} proved that there is a unique constant $c\in\Q_p$ with the following property: the power series
\begin{equation}\label{eq:sigmadef}
\sigma(t)=z\exp\Bigl(-\iint\bigl(x+c-z^{-2}\bigr)\,dz\,dz\Bigr)
\end{equation}
has coefficients in $\Z_p$. Here the double integral is taken formally in $z$, with zero constants of integration, and $x$ is expressed as a Laurent series in $z$. Equivalently, $\sigma$ is odd, $\sigma=t+O(t^2)$, and $-\tfrac{d}{\omega}\bigl(\tfrac{1}{\sigma}\tfrac{d\sigma}{\omega}\bigr)=x+c$. This $\sigma$ is the \emph{canonical $p$-adic sigma function}; see also \cite{MST}. The constant $c$ is essentially the value of Katz's $p$-adic Eisenstein series $\mathbf E_2$ on $(\cE,\omega)$ \cite{Katz, MST}.

For a curve over $\Q$, PARI's \texttt{ellpadics2} returns exactly the constant $c$ in the normalisation \eqref{eq:sigmadef}. We confirmed this convention by checking integrality of \eqref{eq:sigmadef} to high order for \texttt{11a1} at $p=7$ and for \texttt{49a1} at $p=11$.

The canonical sigma function satisfies the following formal identities, for points of the formal group $\cE_1$:
\begin{align}
\sigma(z_1+z_2)\,\sigma(z_1-z_2)&=\bigl(x(z_2)-x(z_1)\bigr)\,\sigma(z_1)^2\sigma(z_2)^2, \label{eq:sigmaadd}\\
\sigma(nz)&=\psi_n(z)\,\sigma(z)^{n^2}\qquad(n\ge1). \label{eq:sigmamult}
\end{align}
Here $\psi_n$ is the $n$-th division polynomial. These identities hold for the complex Weierstrass sigma function (with $x=\wp-b_2/12$). They are therefore identities of formal power series over $\Q[a_1,\dots,a_6]$ for the formal-group expansion of that function. Multiplying $\sigma$ by $\exp(\kappa z^2)$ for a constant $\kappa$ preserves both identities, because $(z_1+z_2)^2+(z_1-z_2)^2=2z_1^2+2z_2^2$ and $(nz)^2=n^2z^2$. The canonical $p$-adic $\sigma$ has this form. See \cite{MazurTate91, MST}.

\begin{lemma}[Change of model]\label{lem:changemodel}
Let $\cE'$ be obtained from $\cE$ by the change of variables $x=u^2x'+r$, $y=u^3y'+su^2x'+t$ with $u\in\Z_p^\times$ and $r,s,t\in\Z_p$. Then the canonical sigma functions satisfy $\sigma'=u\,\sigma$ as functions on the formal group. Also $\psi'_n=u^{1-n^2}\psi_n$.
\end{lemma}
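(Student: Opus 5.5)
The plan is to track how every ingredient in \eqref{eq:sigmadef} transforms under the change of variables, and then to use the uniqueness of the Mazur--Tate constant. Under $x=u^2x'+r$, $y=u^3y'+su^2x'+t$ the standard formulas (Silverman, Table~III.1.2) give $\omega'=u\,\omega$ and $t'=-x'/y'$. The formal groups of $\cE$ and $\cE'$ are identified through the isomorphism of curves, and on them $t = u\,t'+O(t'^2)$. Since the formal logarithm is $z=\int\omega$ normalised to have no constant term, we get $z'=\int\omega'=u\,z$ as functions on the formal group. In particular $z'$ is again a valid formal logarithm for $\cE'$, with $t'=z'+O(z'^2)$.

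Next put $\tilde\sigma:=u\,\sigma$, viewed as a function on the formal group, and check that it satisfies the characterisation for $\cE'$.
\begin{itemize}
\item It is odd, because $\sigma$ is.
\item It has the right leading term: $\tilde\sigma=u\,t+O(t^2)=t'+O(t'^2)$.
\item It has coefficients in $\Z_p$ as a series in $t'$. The substitution $t\mapsto t(t')$ has $\Z_p$-coefficients, because $r,s,t\in\Z_p$ and $u\in\Z_p^\times$, so integrality is preserved.
\item It satisfies the differential equation. Using $d/\omega'=u^{-1}d/\omega$, we get
\[
-\frac{d}{\omega'}\Bigl(\frac{1}{\tilde\sigma}\frac{d\tilde\sigma}{\omega'}\Bigr)=u^{-2}(x+c)=x'+u^{-2}(r+c).
\]
\end{itemize}
So $\tilde\sigma$ satisfies the defining equation for $\cE'$ with constant $c'=u^{-2}(r+c)$. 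By uniqueness of the constant for which the series is integral (Mazur--Tate), and hence uniqueness of $\sigma'$, we conclude $\sigma'=\tilde\sigma=u\,\sigma$.

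The main obstacle is this uniqueness. It is only as strong as the characterisation in \eqref{eq:sigmadef} makes it. Any solution of the differential equation that is odd with leading term $z'$ is determined by its constant $c'$, because the double integral is taken with zero constants of integration. Changing the constant by $\kappa$ multiplies the series by $\exp(-\kappa z'^2/2)$, which is not integral for $\kappa\ne0$. So it suffices to quote the uniqueness statement of \cite{MazurTate91}, and I would note explicitly that integrality in $t$ is equivalent to integrality in $t'$.

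For the division polynomials, use \eqref{eq:sigmamult} on both models. This gives
\[
\psi'_n=\frac{\sigma'(nz)}{\sigma'(z)^{n^2}}=\frac{u\,\sigma(nz)}{u^{n^2}\sigma(z)^{n^2}}=u^{1-n^2}\psi_n .
\]
Here $nz$ denotes multiplication by $n$ on the formal group, which is the same point on both models. This agrees with the classical transformation rule for division polynomials, which could be cited as an independent check.
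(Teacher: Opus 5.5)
Your proof of $\sigma'=u\,\sigma$ follows the paper's route. Both arguments set $c'=u^{-2}(c+r)$, check that $u\sigma$ satisfies the normalisation for $\cE'$ with this constant, check integrality in $t'$, and conclude by Mazur--Tate uniqueness. The paper plugs directly into \eqref{eq:sigmadef}, using $x'+c'-z'^{-2}=u^{-2}(x+c-z^{-2})$ and $dz'\,dz'=u^2\,dz\,dz$; you verify the equivalent differential-equation characterisation instead.

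There is one slip. You wrote $t=u\,t'+O(t'^2)$, but $\omega'=u\,\omega$ gives $t'=u\,t+O(t^2)$, i.e.\ $t=u^{-1}t'+\cdots$. Your later lines ($z'=uz$ and $u\,t+O(t^2)=t'+O(t'^2)$) already use the correct direction, so nothing downstream is affected.

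You genuinely differ from the paper on the division-polynomial statement.
\begin{itemize}
\item The paper derives $\psi'_n=u^{1-n^2}\psi_n$ from isobaric weights: $x,y,a_i$ have weights $2,3,i$, and $\psi_n$ has weight $n^2-1$. This is purely algebraic and independent of any sigma function. As stated, however, it only accounts for the scaling by $u$; that $\psi_n$ is unchanged as a function under the translations $r,s,t$ is left implicit.
\item You divide \eqref{eq:sigmamult} for the two models and use $\sigma'=u\sigma$. This handles $u,r,s,t$ at once. The cost is that it relies on \eqref{eq:sigmamult} for the canonical $p$-adic sigma functions of both models. It also needs the easy fact that two rational functions on the curve with the same expansion in the formal parameter at $O$ are equal, since the function field embeds in the Laurent series field.
\end{itemize}
Both routes are valid.
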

\begin{proof}
We have $\omega'=u\,\omega$, so $z'=uz$, and $x'=u^{-2}(x-r)$. Put $c'=u^{-2}(c+r)$. Then $x'+c'-z'^{-2}=u^{-2}(x+c-z^{-2})$, and since $dz'\,dz'=u^2\,dz\,dz$, formula \eqref{eq:sigmadef} for $\cE'$ with the constant $c'$ gives $uz\exp(-\iint(x+c-z^{-2})\,dz\,dz)=u\sigma$. This series has $\Z_p$-coefficients as a function of $t'$, because $t'\in u t+t^2\Z_p[[t]]$ and $u$ is a unit. By uniqueness, $c'$ is the canonical constant of $\cE'$ and $\sigma'=u\sigma$. The statement about $\psi_n$ follows from the weights: $\psi_2=2y+a_1x+a_3$ has weight $3$, and $\psi_n$ has weight $n^2-1$.
\end{proof}

\subsection{Local heights}
Fix a character $\chi$ of $K$ as above with $\chi_v\equiv0$ for all $v\in S$. For $R\in E(K_v)\setminus\{O\}$ we define local height functions $\lambda_v(R)\in\Q_p$ as follows.
\begin{enumerate}[label=(\roman*)]
\item If $v\in S$: $\lambda_v=0$.
\item If $v\nmid p$ and $v\notin S$: put
\[
\lambda_v(R)=\chi_v(\pi_v)\,\mu_v(R),\qquad \mu_v(R)=\max\bigl(0,-\tfrac12v(x(R))\bigr)\in\Z_{\ge0}.
\]
If $R$ reduces to $O$ on the smooth model, then $\mu_v(R)=v(t(R))$ is the intersection number of the sections $R$ and $O$. Indeed, near $O$ the zero section is cut out by the local parameter $t=-x/y$. Otherwise $\mu_v(R)=0$.
\item If $v\mid p$: let $n\ge1$ be prime to $p$ and such that $nR$ lies in the formal group $E_1(K_v)$, and put
\[
\lambda_v(R)=\frac1{n^2}\Bigl(\chi_v\bigl(\sigma_v(nR)\bigr)-\chi_v\bigl(\psi_n(R)\bigr)\Bigr).
\]
By \eqref{eq:sigmamult} and the identity $\psi_{nk}(R)=\psi_k(nR)\psi_n(R)^{k^2}$ of rational functions, this does not depend on $n$. Such an $n$ exists when $p\nmid\#\widetilde E(k_v)$; we then take $n=\#\widetilde E(k_v)$. (In our application $\#\widetilde E(\F_p)=5728$ is prime to $p$. In general one may also allow $n$ divisible by $p$, but we do not need this.)
\end{enumerate}

\begin{lemma}\label{lem:unitarg}
Let $v\mid p$ with $K_v=\Q_p$, and let $R\in E(K_v)$ have $v$-integral $x$-coordinate. Let $n\ge1$ be prime to $p$ and such that $nR\in E_1(K_v)$. Then $\sigma_v(nR)/\psi_n(R)$ is a unit. So
\[
\lambda_v(R)=\frac{1}{n^2}\,c_v\log\bigl(\sigma_v(nR)/\psi_n(R)\bigr),
\]
which does not involve the values $\chi_v(\pi_v)$ or any choice of branch of the $p$-adic logarithm.
\end{lemma}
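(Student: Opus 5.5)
The plan is to show that $\sigma_v(nR)$ and $\psi_n(R)$ have the same $v$-adic valuation. Then their quotient lies in $\Z_p^\times$. Since $\chi_v$ is a homomorphism, $\chi_v(\sigma_v(nR))-\chi_v(\psi_n(R))=\chi_v\bigl(\sigma_v(nR)/\psi_n(R)\bigr)$. On $\Z_p^\times$ we have $\chi_v=c_v\log$, so the displayed formula follows. Only the restriction of $\chi_v$ to units enters, so neither $\chi_v(\pi_v)$ nor a branch of $\log$ appears. Throughout I assume $nR\neq O$, so that $\psi_n(R)\neq0$ and $\sigma_v(nR)\neq0$; this is implicit in the definition of $\lambda_v$.

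\emph{Step 1: valuation of $\sigma_v(nR)$.} By \S\ref{subsec:sigma}, $\sigma_v(t)\in t+t^2\Z_p[[t]]$. For a nonzero point of the formal group $E_1(K_v)$ we have $v(t)\ge1$, so $v(\sigma_v(nR))=v(t(nR))$. Since $t=-x/y$ and $v(x(nR))=-2k$, $v(y(nR))=-3k$ for some $k\ge1$, this gives $v(\sigma_v(nR))=k=-\tfrac12 v(x(nR))$.

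\emph{Step 2: valuation of $\psi_n(R)$.} Use the standard formula $x(nR)=\phi_n(R)/\psi_n(R)^2$, where $\phi_n=x\psi_n^2-\psi_{n-1}\psi_{n+1}$. Both $\phi_n$ and $\psi_n^2$ are polynomials in $x$ with coefficients in $\Z[a_1,\dots,a_6]$. Since $x(R)$ is $v$-integral and the model is integral, both $\phi_n(R)$ and $\psi_n(R)^2$ are $v$-integral. The key input is that $\phi_n(x)$ and $\psi_n^2(x)$ cannot vanish simultaneously modulo $v$ at an integral $x$. This holds because, as polynomials in $x$, their resultant is $\pm\Delta^{n^2(n^2-1)/6}$ (a classical result, e.g.\ McKee or Ayad), and $\Delta$ is a $v$-unit because the reduction at $v$ is good.

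Alternatively, one can argue geometrically on the reduction. Suppose both vanish at $\tilde x=x(\tilde R)$. Then $\tilde x$ would be a common root of the reduced $\phi_n$ and $\psi_n^2$ on the smooth curve $\widetilde E$. This is impossible, since there $x\circ[n]=\tilde\phi_n/\tilde\psi_n^2$ is written in lowest terms: its degree is $n^2=\deg[n]$.

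Since $nR\in E_1(K_v)$, we have $v(x(nR))<0$, so $v(\psi_n(R)^2)>v(\phi_n(R))$. Coprimality then forces $v(\phi_n(R))=0$, hence $v(x(nR))=-2v(\psi_n(R))$ and $v(\psi_n(R))=k$. For even $n$, $\psi_n$ involves $y$. This causes no problem: I only use $\psi_n^2\in\Z[a_i][x]$ and the integrality of $y(R)$, which follows from the integrality of $x(R)$.

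\emph{Conclusion and main obstacle.} Combining Steps 1 and 2, $\sigma_v(nR)/\psi_n(R)\in\Z_p^\times$, and the formula follows as explained at the start. The main obstacle is the coprimality statement in Step 2. I would cite the resultant formula, and give the reduction-based degree argument as a self-contained justification if a reference is not wanted. Lemma~\ref{lem:changemodel} shows that changing to another integral model with good reduction only multiplies $\sigma_v$ and $\psi_n$ by units. So nothing depends on the choice of minimal model at $v$.
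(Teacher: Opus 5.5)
Your proof is correct and follows essentially the paper's route: both arguments hinge on $\phi_n(R)$ being a $v$-unit, so that $v(\psi_n(R))=-\tfrac12 v(x(nR))=v(t(nR))=v(\sigma_v(nR))$. The paper gets this from $\bar\phi_n(\bar R)=-\bar\psi_{n+1}(\bar R)\bar\psi_{n-1}(\bar R)\neq0$ and also shows that $\omega_n(R)$ is a unit in order to control $t=-x/y$; you instead get it from the coprimality of $\phi_n$ and $\psi_n^2$ modulo $v$ (resultant or degree count), and you use the formal-group relation $v(y)=\tfrac32 v(x)$ in place of $\omega_n$.
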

\begin{proof}
Write $x(nR)=\phi_n(R)/\psi_n(R)^2$ and $y(nR)=\omega_n(R)/\psi_n(R)^3$ with the standard polynomials $\phi_n,\omega_n,\psi_n$ in $x(R),y(R)$ with integral coefficients. Denote reductions by bars. We have $\bar R\ne\bar O$ and $n\bar R=\bar O$, and hence $\bar\psi_n(\bar R)=0$, since on the reduced curve the divisor of $\psi_n$ is $\sum_{Q\in E[n]}(Q)-n^2(O)$.

From $\phi_n=x\psi_n^2-\psi_{n+1}\psi_{n-1}$ we get $\bar\phi_n(\bar R)=-\bar\psi_{n+1}(\bar R)\bar\psi_{n-1}(\bar R)$. This is nonzero, because $(n\pm1)\bar R=\pm\bar R\ne\bar O$. So $\phi_n(R)$ is a unit. Similarly $\omega_n(R)$ is a unit. Indeed $2\omega_n=\psi_{2n}/\psi_n-\psi_n(a_1\phi_n+a_3\psi_n^2)$, and the polynomial $\psi_{2n}/\psi_n$ has divisor $\sum_{Q\in E[2n]\setminus E[n]}(Q)-3n^2(O)$ on the reduced curve, since $2n$ is prime to $p$. So it does not vanish at $\bar R\in E[n]$. Here $2$ is a unit because $p\ge5$.

Now $t(nR)=-x(nR)/y(nR)=-\phi_n(R)\psi_n(R)/\omega_n(R)$, so $t(nR)/\psi_n(R)$ is a unit. Finally $\sigma(t)/t\in1+t\Z_p[[t]]$ is a unit for $t\in p\Z_p$.
\end{proof}

\begin{lemma}[Local properties]\label{lem:localprops}
Let $v$ be a place, and let $R,R'\in E(K_v)\setminus\{O\}$.
\begin{enumerate}
\item $\lambda_v(-R)=\lambda_v(R)$.
\item If $R'\ne\pm R$, then $\lambda_v(R+R')+\lambda_v(R-R')=2\lambda_v(R)+2\lambda_v(R')+\chi_v\bigl(x(R)-x(R')\bigr)$.
\item If $2R\ne O$, then $\lambda_v(2R)=4\lambda_v(R)+\chi_v(\psi_2(R))$, where $\psi_2=2y+a_1x+a_3$.
\end{enumerate}
\end{lemma}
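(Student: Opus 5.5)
The plan is to verify the three identities place by place, following the three cases in the definition of $\lambda_v$. For $v\in S$ there is nothing to prove: $\lambda_v\equiv0$, and the extra terms $\chi_v(x(R)-x(R'))$ and $\chi_v(\psi_2(R))$ vanish because $\chi_v\equiv0$ for $v\in S$ by hypothesis. Part (1) is immediate in the other two cases as well. Away from $p$, $x(-R)=x(R)$ gives $\mu_v(-R)=\mu_v(R)$. Above $p$, we have $\sigma(-z)=-\sigma(z)$ and $\psi_n(-R)=\pm\psi_n(R)$, and $\chi_v(-1)=0$ since $\Q_p$ has no torsion. Throughout I use that $\chi_v(-1)=0$, so all sign ambiguities in the identities below are harmless.

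\emph{Places $v\mid p$.} The key input is the following identity of rational functions on $E\times E$:
\[
x(nR)-x(nR')=\pm\frac{\psi_n(R+R')\,\psi_n(R-R')\,\bigl(x(R)-x(R')\bigr)^{n^2}}{\psi_n(R)^2\,\psi_n(R')^2}.
\]
I would derive it from the complex identities $x(P)-x(Q)=\pm\sigma(P+Q)\sigma(P-Q)/(\sigma(P)^2\sigma(Q)^2)$ and $\psi_n(P)=\sigma(nP)/\sigma(P)^{n^2}$. Substituting $P=nR$, $Q=nR'$ and expanding each $\sigma(n\,\cdot)$ makes all the $\sigma$'s cancel, leaving the displayed rational identity, which then holds over any field. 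Next I choose $n$ prime to $p$ (for instance $n=\#\widetilde E(k_v)$) so that $nR,nR',n(R\pm R')\in E_1(K_v)$. I apply \eqref{eq:sigmaadd} to $z_1=nR$, $z_2=nR'$ and take $\chi_v$. I then subtract $\chi_v$ of the displayed identity and divide by $n^2$; this gives (2) directly from the definition of $\lambda_v$. For (3) I use \eqref{eq:sigmamult}, which gives $\sigma(2nR)=\psi_2(nR)\sigma(nR)^4$. I combine it with the two factorisations $\psi_{2n}(R)=\psi_2(nR)\psi_n(R)^4=\psi_n(2R)\psi_2(R)^{n^2}$ and compute $\lambda_v(2R)$ with the integer $n$. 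The independence of $n$ already noted in the definition makes this legitimate. Degenerate cases, where some factor vanishes or one of $R\pm R'$ is $O$, are excluded by the hypotheses $R'\ne\pm R$ and $2R\ne O$.

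\emph{Good places $v\nmid p$, $v\notin S$.} Here $\lambda_v=\chi_v(\pi_v)\mu_v$, and $\chi_v(\alpha)=v(\alpha)\chi_v(\pi_v)$. So it suffices to prove the valuation identities
\[
\mu_v(R+R')+\mu_v(R-R')=2\mu_v(R)+2\mu_v(R')+v\bigl(x(R)-x(R')\bigr)
\]
and $\mu_v(2R)=4\mu_v(R)+v(\psi_2(R))$. I would prove these intersection-theoretically on the smooth minimal model $\mathcal E/\cO_v$. On $\mathcal E$, the function $x-x(R')$ has horizontal divisor $(R')+(-R')-2(O)$. Pulling it back along the section $R$ gives
\[
v\bigl(x(R)-x(R')\bigr)=(R\cdot R')+(R\cdot -R')-2(R\cdot O),
\]
at least when $R\notin\{O,\pm R'\}$ generically; the identity is checked directly from the definition of $\mu_v$ when $R$ reduces to $\bar O$. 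Translation invariance on the group scheme gives $(R\cdot R')=((R-R')\cdot O)=\mu_v(R-R')$, and likewise $(R\cdot -R')=\mu_v(R+R')$. Substituting, and using $(R'\cdot O)$ contributions appropriately, yields the first identity. For the doubling formula I use that $\psi_2$ has divisor $\sum_{T\in E[2]}(T)-4(O)$ on $\mathcal E$; this is valid because $2$ is invertible at $v$, as $v\notin S$. Evaluating at $R$ and using translation invariance gives $\sum_{T\ne O}\mu_v(R-T)-3\mu_v(R)$. Alternatively, and more concretely, I would write $x(2R)=\phi_2(R)/\psi_2(R)^2$ and do a case split on whether $\bar R=\bar O$ and whether $\bar R\in\widetilde E[2]$, using that $\phi_2,\psi_2$ have no common zero on the reduction.

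\emph{Main obstacle.} I expect the good-place case of (2) to be the hardest, specifically keeping the intersection-number bookkeeping honest. The delicate configurations are when $R$ and $R'$ reduce to the same point or to opposite points, and when one of them lies in the formal group. If the intersection argument becomes awkward, my fallback is an explicit case analysis with the formal group. If $R,R'\in E_1$, then $v(t(R\pm R'))\ge\min(v(t(R)),v(t(R')))$, with equality for at least one sign. I would pair this with $x=t^{-2}+\cdots$ to compute $v(x(R)-x(R'))$ directly, and handle the remaining configurations similarly.
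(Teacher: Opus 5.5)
Your overall route is the paper's. There is nothing to prove at $v\in S$. At $v\mid p$ you use the sigma identities together with the rational-function identity for $\psi_n$ and the two factorisations of $\psi_{2n}$. At the other good places you use intersection numbers on the smooth model with translation invariance for (2), and the case split based on $x(2R)=\phi_2(R)/\psi_2(R)^2$ for (3). Those parts are fine. The gap is in the central step of (2) at a good place $v\nmid p$, $v\notin S$, which is the case you flagged yourself.

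Your display
\[
v\bigl(x(R)-x(R')\bigr)=(R\cdot R')+(R\cdot -R')-2(R\cdot O)
\]
is false in general, because it only accounts for the horizontal part of the divisor of $f=x-x(R')$ on $\mathcal E$. If $v(x(R'))<0$, then $f$ also has a vertical component. Write $f=-x(R')\bigl(1-x/x(R')\bigr)$; the second factor is $1$ at the generic point of the special fibre $\mathcal E_s$, so $\operatorname{ord}_{\mathcal E_s}(f)=\min\bigl(0,v(x(R'))\bigr)=-2\mu_v(R')$. Since the section $R$ meets $\mathcal E_s$ with multiplicity one, the correct formula is
\[
v\bigl(x(R)-x(R')\bigr)=(R\cdot R')+(R\cdot -R')-2(R\cdot O)-2\mu_v(R').
\]
This vertical term is exactly the source of the $2\mu_v(R')$ in (2), and it is how the paper argues.

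You can check that your version cannot hold. Its left side is symmetric in $R,R'$, but exchanging $R$ and $R'$ changes its right side by $2\mu_v(R)-2\mu_v(R')$. Concretely, if $\bar R\ne\bar O$ and $R'\neq O$ lies in the formal group, the left side is $-2\mu_v(R')<0$ while the right side is $0$. So ``using $(R'\cdot O)$ contributions appropriately'' has nothing in your display to draw on, and the substitution does not give (2).

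Relatedly, the configuration you set aside ($R$ reducing to $\bar O$) needs no special treatment, since the pullback formula holds for any section generically disjoint from $\{O,\pm R'\}$. The configuration that does need the extra term is $R'$ reducing to $\bar O$.

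A minor point: at $v\mid p$, your hypotheses do not actually exclude $nR=O$ or $n(R\pm R')=O$ when these are nonzero torsion points of order dividing $n$. So ``degenerate cases are excluded'' overstates things, but the paper's definition of $\lambda_v$ has the same blind spot.
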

\begin{proof}
At places of type (i) all terms are $0$.

\emph{Type (ii).} Let $\cO=\cO_v$ and let $\mathcal E/\cO$ be the smooth proper Weierstrass model. It is a regular surface, and it is the N\'eron model. Sections of $\mathcal E$ correspond to points of $E(K_v)$, and translations by sections are automorphisms of $\mathcal E$. Hence $(R\cdot R')=((R-R')\cdot O)$, where $(\ \cdot\ )$ denotes the intersection number over $\cO$ of the corresponding sections (see \cite[Ch.~9]{Liu}).

(1) holds because $x(-R)=x(R)$.

For (2), consider the rational function $f=x-x(R')$ on $\mathcal E$. On the generic fibre, $\mathrm{div}(f)=(R')+(-R')-2(O)$. Its order along the special fibre $\mathcal E_s$ is $\min(0,v(x(R')))=-2\mu_v(R')$. Indeed, $x$ has nonzero reduction, and if $v(x(R'))<0$ then $f=-x(R')(1-x/x(R'))$ with $1-x/x(R')\equiv1$ at the generic point of $\mathcal E_s$. Evaluating $f$ at the section $R$, which is disjoint from $\pm R'$ and $O$ on the generic fibre, gives
\[
v\bigl(x(R)-x(R')\bigr)=(R\cdot R')+(R\cdot(-R'))-2(R\cdot O)-2\mu_v(R').
\]
By translation invariance, $(R\cdot R')=\mu_v(R-R')$ and $(R\cdot(-R'))=\mu_v(R+R')$. Also $(R\cdot O)=\mu_v(R)$. Multiplying by $\chi_v(\pi_v)$ gives (2).

For (3), note that $v\nmid2$, since $S$ contains the primes above $2$. There are two cases.
\begin{itemize}
\item If $x(R)$ is integral, then $\mu_v(R)=0$. If $\bar R$ is not $2$-torsion, then $\psi_2(R)$ is a unit and $2\bar R\ne\bar O$, so both sides vanish. If $\bar R$ is $2$-torsion, then $\bar R\ne\bar O$ and $\phi_2(R)$ is a unit, as in the proof of Lemma~\ref{lem:unitarg}. So $\mu_v(2R)=-\tfrac12v(\phi_2/\psi_2^2)=v(\psi_2(R))$.
\item If $v(x(R))=-2k<0$, then $v(y(R))=-3k$ and $v(\psi_2(R))=-3k$, because $2$ is a unit. Also $2R\in E_k\setminus E_{k+1}$, since $[2](t)=2t+\cdots$ with $2$ a unit. So $\mu_v(2R)=k=4k-3k$.
\end{itemize}
Multiplying by $\chi_v(\pi_v)$ gives (3).

\emph{Type (iii).} (1) holds because $\sigma$ is odd, $\psi_n(-R)=\pm\psi_n(R)$, and $\chi_v(-1)=0$.

For (2), choose $n$ with $nR,nR'\in E_1(K_v)$, so that also $n(R\pm R')\in E_1(K_v)$. By the definition,
\[
n^2\bigl[\lambda_v(R+R')+\lambda_v(R-R')-2\lambda_v(R)-2\lambda_v(R')\bigr]=\chi_v\Bigl(\tfrac{\sigma(nR+nR')\sigma(nR-nR')}{\sigma(nR)^2\sigma(nR')^2}\Bigr)-\chi_v\Bigl(\tfrac{\psi_n(R+R')\psi_n(R-R')}{\psi_n(R)^2\psi_n(R')^2}\Bigr).
\]
By \eqref{eq:sigmaadd}, the first term is $\chi_v(x(nR')-x(nR))$. The identity of rational functions
\[
\frac{\psi_n(R+R')\,\psi_n(R-R')}{\psi_n(R)^2\,\psi_n(R')^2}=\frac{x(nR')-x(nR)}{\bigl(x(R')-x(R)\bigr)^{n^2}}
\]
(which follows from \eqref{eq:sigmaadd} and \eqref{eq:sigmamult} for the complex sigma function) shows that the second term is $\chi_v(x(nR')-x(nR))-n^2\chi_v(x(R')-x(R))$. So the right-hand side is $n^2\chi_v(x(R)-x(R'))$, using $\chi_v(-1)=0$.

For (3), choose $n$ with $nR\in E_1(K_v)$. By \eqref{eq:sigmamult}, $\sigma(2nR)=\psi_2(nR)\sigma(nR)^4$. Also
\[
\psi_{2n}(R)=\psi_n(2R)\,\psi_2(R)^{n^2}=\psi_2(nR)\,\psi_n(R)^{4}.
\]
Hence
\[
n^2\lambda_v(2R)=\chi_v(\sigma(2nR))-\chi_v(\psi_n(2R))=4\chi_v(\sigma(nR))-4\chi_v(\psi_n(R))+n^2\chi_v(\psi_2(R)),
\]
which is (3).
\end{proof}

\begin{lemma}\label{lem:quadratic}
Let $A$ be an abelian group, $V$ a $\Q$-vector space and $h\colon A\to V$ a function with $h(0)=0$ and
\[
h(a+b)+h(a-b)=2h(a)+2h(b)\qquad\text{for all }a,b\in A .
\]
Then $B(a,b)=\tfrac14\bigl(h(a+b)-h(a-b)\bigr)$ is symmetric and biadditive, and $h(a)=B(a,a)$.
\end{lemma}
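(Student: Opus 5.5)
This is the classical argument that the parallelogram law characterises quadratic forms, carried out over a $\Q$-vector space so that division by $2$ and $4$ is allowed. I would derive the basic consequences of the parallelogram law first, then prove symmetry, then additivity in one variable. Additivity will follow from a four-term identity obtained by applying the law twice.

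\emph{Elementary consequences.} Put $a=b=0$ to recover $h(0)=0$, which is consistent with the hypothesis. Put $a=0$ to get $h(b)+h(-b)=2h(b)$, so $h(-b)=h(b)$ and $h$ is even. Put $b=a$ to get $h(2a)=4h(a)$. Then
\[
B(a,a)=\tfrac14\bigl(h(2a)-h(0)\bigr)=h(a).
\]
Symmetry is also immediate: $B(b,a)=\tfrac14\bigl(h(a+b)-h(b-a)\bigr)=B(a,b)$ by evenness. Since $B$ is symmetric, it remains only to show that $B(a+a',b)=B(a,b)+B(a',b)$.

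\emph{Additivity.} We must show
\[
h(a+a'+b)-h(a+a'-b)=h(a+b)-h(a-b)+h(a'+b)-h(a'-b).
\]
Apply the law to the pair $(a+b,\,a')$ and to the pair $(a-b,\,a')$, and subtract:
\[
h(a+a'+b)-h(a+a'-b)+h(a-a'+b)-h(a-a'-b)=2h(a+b)-2h(a-b).
\]
In terms of $B$, this says
\begin{equation*}
B(a+a',b)+B(a-a',b)=2B(a,b).
\end{equation*}
Write $\beta(x)=B(x,b)$ for fixed $b$. Evenness of $h$ gives $\beta(0)=0$, and $\beta(x)+\beta(y)=2\beta\bigl((x+y)/2\bigr)$ whenever the midpoint makes sense.

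\emph{Main obstacle: halving.} We cannot halve in $A$, so instead we use the identity above directly. Setting $a=a'$ gives $\beta(2a)=2\beta(a)$. Now put $x=a+a'$ and $y=a-a'$. The identity reads $\beta(x)+\beta(y)=2\beta(a)$, and applying it with the roles swapped gives $\beta(x)-\beta(y)=2\beta(a')$.

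To get both relations for arbitrary $x,y$ without halving, apply the identity with $(a,a')$ replaced by $(x,y)$. This gives
\[
\beta(x+y)+\beta(x-y)=2\beta(x).
\]
Swapping $x$ and $y$, and using $\beta(y-x)=-\beta(x-y)$ (which holds because $h$ is even), gives
\[
\beta(x+y)-\beta(x-y)=2\beta(y).
\]
Adding the two equations yields $2\beta(x+y)=2\beta(x)+2\beta(y)$. Since $V$ is a $\Q$-vector space we can divide by $2$, so $\beta$ is additive, and therefore $B$ is biadditive.
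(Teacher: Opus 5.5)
Your proof is correct and is essentially the paper's argument: evenness and $h(2a)=4h(a)$ give symmetry and $B(a,a)=h(a)$, applying the law to the pairs $(a+b,a')$ and $(a-b,a')$ gives $B(a+a',b)+B(a-a',b)=2B(a,b)$, and adding this to its swapped version (using that $B$ is odd in the first variable) gives additivity. The intermediate paragraph with $x=a+a'$, $y=a-a'$ is an unnecessary detour but does no harm.
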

\begin{proof}
Taking $a=0$ shows that $h$ is even, and taking $a=b$ gives $h(2a)=4h(a)$. So $B$ is symmetric, $B(a,a)=\tfrac14h(2a)=h(a)$, and $b\mapsto B(a,b)$ is odd.

Fix $y$ and put $g(a)=B(a,y)$. Then
\[
4\bigl(g(a_1+a_2)+g(a_1-a_2)\bigr)=h(a_1+a_2+y)+h(a_1-a_2+y)-h(a_1+a_2-y)-h(a_1-a_2-y).
\]
Grouping the terms as $h((a_1+y)\pm a_2)$ and $h((a_1-y)\pm a_2)$ and using the hypothesis, this equals
\[
2h(a_1+y)+2h(a_2)-2h(a_1-y)-2h(a_2)=8g(a_1).
\]
So $g(a_1+a_2)+g(a_1-a_2)=2g(a_1)$. Exchanging $a_1$ and $a_2$ gives $g(a_1+a_2)+g(a_2-a_1)=2g(a_2)$. Adding the two identities and using that $g$ is odd gives $g(a_1+a_2)=g(a_1)+g(a_2)$.
\end{proof}

\begin{proposition}[Global height]\label{prop:global}
Assume $\chi_v\equiv0$ for all $v\in S$, and that $E(K)$ contains a point of infinite order. Define $h(O)=0$ and
\[
h(R)=\sum_v\lambda_v(R)\qquad(R\in E(K)\setminus\{O\}).
\]
The sum is over all finite places and is finite. Then $h$ is a quadratic form on $E(K)$, i.e.\ $h(R)=\langle R,R\rangle$ for a symmetric biadditive pairing $\langle\ ,\ \rangle\colon E(K)\times E(K)\to\Q_p$. It vanishes on torsion points, and $\langle R,Q\rangle=0$ for torsion $Q$.
\end{proposition}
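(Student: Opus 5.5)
The plan is to check that $h$ satisfies the parallelogram law of Lemma~\ref{lem:quadratic} on all of $E(K)$, and then read off the claims about torsion.

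\emph{Finiteness.} For $R\ne O$, the coordinate $x(R)$ is $v$-integral at all but finitely many $v$. So $\mu_v(R)=0$ for almost all $v$, and only finitely many terms of type (ii) are nonzero. The terms of type (i) are $0$, and there are finitely many places above $p$. So the sum defining $h(R)$ is finite.

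\emph{Parallelogram law for $R'\ne\pm R$, both nonzero.} Sum Lemma~\ref{lem:localprops}(2) over all finite places $v$. At places $v\in S$ both sides vanish, and $\chi_v\equiv0$ there as well. So the defect term summed over all places is $\sum_v\chi_v\bigl(x(R)-x(R')\bigr)=\chi\bigl(x(R)-x(R')\bigr)$. Archimedean components are $0$, and $x(R)-x(R')\in K^\times$. By the product formula this is $0$, which gives
\[
h(R+R')+h(R-R')=2h(R)+2h(R').
\]

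\emph{Degenerate cases.} If $R=O$ or $R'=O$, the law follows from evenness, which is Lemma~\ref{lem:localprops}(1) summed over $v$. Now let $R'=\pm R$ with $R\ne O$; by evenness we may take $R'=R$, so we need $h(2R)=4h(R)$, with $h(O)=0$ when $2R=O$.
\begin{itemize}
\item If $2R\ne O$, sum Lemma~\ref{lem:localprops}(3) over $v$. The defect is $\chi(\psi_2(R))$, which vanishes by the product formula because $\psi_2(R)\ne0$ in $K$.
\item If $2R=O$, we need $h(R)=0$, and I expect this to be the main obstacle. One route is to pick a point $Q$ of infinite order, which the hypothesis provides. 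Apply the already established case (2) to the pairs $(Q,R)$ and $(Q+R,R)$, using $Q\pm R\ne O$, $Q\ne\pm R$ and $Q+R\ne\pm R$. Since $R=-R$, this gives $2h(Q+R)=2h(Q)+2h(R)$ and $2h(Q)=2h(Q+R)+2h(R)$. Adding the two equations yields $4h(R)=0$, hence $h(R)=0$. This is where the hypothesis on a point of infinite order is used.
\end{itemize}
With the law in hand on all of $E(K)$, Lemma~\ref{lem:quadratic} (with $V=\Q_p$) gives the symmetric biadditive $\langle\,,\rangle$ with $h(R)=\langle R,R\rangle$.

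\emph{Torsion.} If $Q$ is torsion, say $NQ=O$, then biadditivity gives $N\langle R,Q\rangle=\langle R,NQ\rangle=0$. Since $\Q_p$ is torsion-free, $\langle R,Q\rangle=0$, and in particular $h(Q)=0$.
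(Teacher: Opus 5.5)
Your proposal is correct and follows essentially the same route as the paper. You get the parallelogram law by summing Lemma~\ref{lem:localprops} over all places and applying the product formula to $x(R)-x(R')$ and $\psi_2(R)$. You handle the $2$-torsion case with a point of infinite order via the pairs $(Q,R)$ and $(Q+R,R)$, exactly as the paper does, and finish with Lemma~\ref{lem:quadratic} and the torsion-freeness of $\Q_p$. Your justification that the sum is finite (since $x(R)$ is $v$-integral at almost all $v$) is a small addition that the paper leaves implicit.
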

\begin{proof}
We verify the hypothesis of Lemma~\ref{lem:quadratic}.

(a) Let $R,R'\ne O$ with $R'\ne\pm R$. Sum Lemma~\ref{lem:localprops}(2) over all places and apply the product formula to $x(R)-x(R')\in K^\times$.

(b) If $R$ or $R'$ is $O$, the identity follows from evenness, which holds by Lemma~\ref{lem:localprops}(1).

(c) Let $R'=\pm R$ with $2R\ne O$. Summing Lemma~\ref{lem:localprops}(3) over all places and applying the product formula to $\psi_2(R)\in K^\times$ gives $h(2R)=4h(R)$. This is the required identity.

(d) Let $R'=\pm R=T$ with $T\ne O$ and $2T=O$. We must show $4h(T)=0$. Let $R$ be a point of infinite order. By (a) for the pair $(R,T)$, and since $R-T=R+T$, we get $h(R+T)=h(R)+h(T)$. By (a) for the pair $(R+T,T)$, we get $2h(R)=2h(R+T)+2h(T)=2h(R)+4h(T)$. So $h(T)=0$.

By Lemma~\ref{lem:quadratic}, $h$ is a quadratic form. If $Q$ is torsion of order $k$, then $k\langle R,Q\rangle=\langle R,kQ\rangle=0$, so $\langle R,Q\rangle=0$ because $\Q_p$ is torsion-free. In particular $h(Q)=\langle Q,Q\rangle=0$.
\end{proof}

\begin{remark}\label{rem:model}
Local heights at $v\mid p$ depend on the Weierstrass model. By Lemma~\ref{lem:changemodel}, replacing the model by one obtained with a unit $u$ changes $\lambda_v(R)$ by the constant $\chi_v(u)$ for all $R\ne O$. This matters in Section~\ref{sec:computation}, where we compute the local heights at the primes $\fp_k\mid p$ by transporting points to the curve $E_0=\texttt{49a1}$.

Let $u_k$ be the scaling factor of the isomorphism between the two models at $\fp_k$, so that $u_k^{12}=\sigma_k(\Delta_m)/\Delta_{0}$. The total change over all $\fp_k$ is
\[
\sum_k\chi_{\fp_k}(u_k)=\tfrac1{12}\sum_k\chi_{\fp_k}\bigl(\Delta_m/\Delta_0\bigr).
\]
By the product formula this equals $-\tfrac1{12}\sum_{v\nmid p}\chi_v(\Delta_m/\Delta_0)$. This vanishes whenever $\chi_v=0$ at all primes dividing $\Delta_m\Delta_0$, which holds for the characters of Section~\ref{sec:identity}. So transporting to $E_0$ does not change global heights, or the functions of Section~\ref{sec:identity}, even though the individual local terms change. In Section~\ref{sec:computation} we avoid the issue altogether: we evaluate the sigma function of $E_m$ itself as $u_k\sigma_0$, by Lemma~\ref{lem:changemodel}. As a check, we verified numerically that $\sum_kc_k\log u_k=O(p^{40})$ for our characters, while the individual terms have valuation $1$.
\end{remark}

\subsection{Height of a global point in practice}
Let $R\in E(K)$ have $x(R)$ integral at all $v\mid p$. Let $\mathfrak d_R$ be the prime-to-$S$ part of the denominator ideal of $x(R)$, namely the ideal $(\cO_K+x(R)\cO_K)^{-1}$ with its $S$-part removed. Let $g_R$ be a generator of $\mathfrak d_R$, which exists since $h_K=1$. Then $g_R$ is a unit at every $v\mid p$ and $v(g_R)=2\mu_v(R)$ for $v\notin S$, $v\nmid p$. By the product formula applied to $g_R$ and the vanishing of $\chi$ on $S$,
\begin{equation}\label{eq:hglob}
h(R)=\sum_{v\mid p}\Bigl(\lambda_v(R)-\tfrac12\,c_v\log_v g_R\Bigr).
\end{equation}
Replacing $g_R$ by $\varepsilon g_R$ with $\varepsilon$ a unit does not change \eqref{eq:hglob}, by Lemma~\ref{lem:chars}.

\subsection{Base change and traces}
\begin{proposition}\label{prop:basechange}
Let $K'/K$ be a finite extension such that every prime of $K$ above $p$ splits completely in $K'$. Let $S'$ be the set of primes of $K'$ above $S$, let $\chi'=\chi\circ N_{K'/K}$, and let $h'$ and $\langle\ ,\ \rangle'$ be the height and pairing on $E(K')$ attached to $\chi'$ and $S'$. Then for $R\in E(K)$ and $R'\in E(K')$,
\[
h'(R)=[K':K]\,h(R),\qquad \langle R,R'\rangle'=\langle R,\Tr_{K'/K}R'\rangle .
\]
\end{proposition}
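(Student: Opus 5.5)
The plan is to prove first the identity $h'(R)=[K':K]\,h(R)$ place by place, and then to deduce the trace formula for the pairing by a Galois-free argument using the local decomposition. Throughout we use the facts, for a prime $w$ of $K'$ above $v$ with $v\mid p$, that $K'_w=K_v$ (by the splitting hypothesis) and $\chi'_w=\chi_v\circ N_{K'_w/K_v}$. For $v\nmid p$ we have instead $\chi'_w(x)=\chi_v(N_{K'_w/K_v}x)$.

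\emph{Step 1 (local comparison).} Fix $R\in E(K)$ and $v$. We claim
\[
\sum_{w\mid v}\lambda'_w(R)=\sum_{w\mid v}\ \text{``}\lambda_v\text{ computed with }\chi_v\circ N_{K'_w/K_v}\text{''}.
\]
\begin{itemize}
\item If $v\in S$, both sides vanish.
\item If $v\mid p$, every $K'_w=K_v$ and $N$ is the identity. The model, the sigma function $\sigma_w=\sigma_v$ and $n$ are all the same, so $\lambda'_w(R)=\lambda_v(R)$, and there are $[K':K]$ such $w$.
\item If $v\notin S$ and $v\nmid p$, let $e_w$ be the ramification index and $f_w$ the residue degree. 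Then $\mu_w(R)=e_w\mu_v(R)$. The model stays smooth over $\cO_w$, so $\mu_w$ is still given by the same formula. Also $\chi'_w(\pi_w)=\chi_v(N\pi_w)=f_w\,\chi_v(\pi_v)$, since $\chi_v$ is unramified. So $\lambda'_w(R)=e_wf_w\lambda_v(R)$, and $\sum_w e_wf_w=[K':K]$.
\end{itemize}
Summing over $v$ gives $h'(R)=[K':K]h(R)$. Note that $\chi'$ is indeed an id\`ele class character, since the norm maps $K'^\times$ into $K^\times$, and it vanishes on $S'$.

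\emph{Step 2 (pairing with traces).} The more robust route is the following. For $R\in E(K)$ and $R'\in E(K')$, write $2\langle R,R'\rangle'=h'(R+R')-h'(R)-h'(R')$. Using Lemma~\ref{lem:localprops}(2) summed over places, we get the standard expression: up to product-formula-cancelling terms, the pairing of two non-equal points is $-\sum_w\chi'_w(x(R)-x(R'))$ plus local heights. Concretely, Lemma~\ref{lem:localprops} gives
\[
\langle R,R'\rangle'=\tfrac12\bigl(h'(R)+h'(R')-h'(R-R')\bigr),
\]
so it is cleaner to pass to the Galois closure. Let $L$ be the Galois closure of $K'/K$. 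Every prime above $p$ still splits completely in $L$, and $h_L(Q)=[L:K']h'(Q)$ for $Q\in E(K')$ by Step 1 applied to $L/K'$. The height $h_L$ attached to $\chi\circ N_{L/K}$ is $\Gal(L/K)$-invariant: the character is Galois-invariant, and the local terms are permuted under $g$, because $g$ maps $w$ to $gw$ and the local formulas are intrinsic. Hence $\langle gA,gB\rangle_L=\langle A,B\rangle_L$.

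Now compute
\[
\langle R,\Tr R'\rangle_L=\sum_{\tau}\langle R,\tau R'\rangle_L,
\]
where $\tau$ runs over the embeddings of $K'$ into $L$ over $K$, realized by coset representatives $g\in\Gal(L/K)/\Gal(L/K')$. Each term satisfies $\langle R,gR'\rangle_L=\langle g^{-1}R,R'\rangle_L=\langle R,R'\rangle_L$, since $R$ is $K$-rational. So
\[
\langle R,\Tr R'\rangle_L=[K':K]\langle R,R'\rangle_L .
\]
By Step 1, polarised, we have $\langle\,,\,\rangle_L=[L:K']\langle\,,\,\rangle'$ on $E(K')$ and $\langle\,,\,\rangle_L=[L:K]\langle\,,\,\rangle$ on $E(K)$. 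Since $\Tr R'\in E(K)$, dividing by $[L:K]$ gives $\langle R,R'\rangle'=\langle R,\Tr R'\rangle$. Proposition~\ref{prop:global} needs a point of infinite order, which is inherited from $K$.

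\emph{Main obstacle.} The delicate point is Galois invariance of the local terms at $p$ when $L$ may be ramified elsewhere. We must check that the sigma function and model data at $gw$ correspond to those at $w$ under $g$, which holds because the model is defined over $K$. We must also check that $S_L$, the primes above $S$, is Galois-stable; it is. Primes of $L$ ramified outside $S$ and $p$ are handled by the $e_wf_w$ computation in Step 1, which requires smoothness of the model over $\cO_w$; this holds because good reduction is preserved under extension.
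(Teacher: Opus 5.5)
Your proof is correct and follows the paper's argument. The first identity comes from the place-by-place comparison ($\lambda'_w=e_wf_w\lambda_v$ away from $p$, and $\lambda'_w=\lambda_v$ above $p$); the trace formula then comes from $\Gal(L/K)$-invariance of the height over the Galois closure $L$, combined with the first identity for $L/K'$ and $L/K$, and the only differences (summing over coset representatives instead of all of $\Gal(L/K)$, and the unused opening remark of Step 2, which you can delete) are cosmetic.
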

\begin{proof}
First, $\chi'$ is a continuous id\`ele class character of $K'$, and $\chi'_w=\chi_v\circ N_{K'_w/K_v}$ vanishes for $w\in S'$. The model of $E$ is still minimal with good reduction outside $S'$, and ordinary at all $w\mid p$. We compare local heights place by place.
\begin{itemize}
\item Let $w\mid v$ with $v\notin S$, $v\nmid p$, and let $e_w$ and $f_w$ be the ramification index and residue degree. For $R\in E(K_v)$ we have $\mu_w(R)=e_w\mu_v(R)$, and $\chi'_w(\pi_w)=\chi_v(N\pi_w)=f_w\chi_v(\pi_v)$. So $\lambda_w(R)=e_wf_w\lambda_v(R)$.
\item Let $w\mid v\mid p$. Then $K'_w=K_v=\Q_p$ and $\chi'_w=\chi_v$, so $\lambda_w(R)=\lambda_v(R)$.
\end{itemize}
Summing over $w\mid v$ gives $\sum_{w\mid v}\lambda_w(R)=[K':K]\lambda_v(R)$, which proves the first claim. It implies $\langle R,R''\rangle'=[K':K]\langle R,R''\rangle$ for $R,R''\in E(K)$.

For the second claim, let $L$ be a Galois closure of $K'/K$ and $\Gamma=\Gal(L/K)$. The primes above $p$ split completely in $L$ as well. The height $h_L$ on $E(L)$ attached to $\chi_L=\chi\circ N_{L/K}$ is $\Gamma$-invariant. To see this, fix $g\in\Gamma$ and a place $w$ of $L$. The completion map at $gw$ is the one at $w$ composed with $g^{-1}$, and $\chi_{L,gw}\circ g=\chi_{L,w}$ because $\chi_L$ is $\Gamma$-invariant. So $\lambda_{gw}(gQ)=\lambda_w(Q)$ for all $Q\in E(L)$, and hence $h_L(gQ)=h_L(Q)$.

Therefore $\langle R,gR'\rangle_L=\langle g^{-1}R,R'\rangle_L=\langle R,R'\rangle_L$ for $R\in E(K)$. Summing over $g\in\Gamma$ and using $\sum_g gR'=[L:K']\,\Tr_{K'/K}R'$, we get
\[
|\Gamma|\,\langle R,R'\rangle_L=[L:K']\,\langle R,\Tr_{K'/K}R'\rangle_L .
\]
By the first claim applied to $L/K'$ and to $L/K$, this reads $|\Gamma|[L:K']\langle R,R'\rangle'=[L:K'][L:K]\langle R,\Tr R'\rangle$. Since $|\Gamma|=[L:K]$, the claim follows.
\end{proof}

\section{The key identity}\label{sec:identity}

From now on $K=F$, $p=5581$, $E=E_m$ is the global minimal model of $E'$, and $S$ is the set of the four primes above $2$ and $7$. This $S$ contains the three bad primes. We write $\fp_1,\dots,\fp_{21}$ for the primes above $p$, $\sigma_k\colon F\to\Q_p$ for the corresponding embeddings, and $\log_k=\log\circ\sigma_k$.

\subsection{$\Q$-null characters}
\begin{definition}
A \emph{$\Q$-null character} is a continuous $\chi\colon\A_F^\times/F^\times\to\Q_p$ such that
\begin{enumerate}[label=(\roman*)]
\item $\chi$ vanishes on $\A_\Q^\times\subset\A_F^\times$;
\item $\chi_v\equiv0$ for the four primes $v$ above $2$ and $7$.
\end{enumerate}
\end{definition}

In terms of the constants $c_k=c_{\fp_k}$, a vector $(c_1,\dots,c_{21})$ defines a $\Q$-null character if and only if the following hold:
\begin{align*}
&\textstyle\sum_k c_k=0, \\
&\textstyle\sum_k c_k\log_k(\varepsilon)=0 \quad\text{for the $11$ fundamental units }\varepsilon,\\
&\textstyle\sum_k c_k\log_k(\pi_v)=0\quad\text{for generators $\pi_v$ of the four primes above }2,7 .
\end{align*}
The second line is the condition for $\chi$ to exist (Lemma~\ref{lem:chars}). By Lemma~\ref{lem:chars}, the third says $\chi_v(\pi_v)=0$. For the first line, note that $\chi|_{\A_\Q^\times}$ is an id\`ele class character of $\Q$. Such a character is determined by its restriction to $\Z_p^\times$, since $\A_\Q^\times=\Q^\times\cdot(\R_{>0}\times\widehat\Z^\times)$. On $\Z_p^\times$ it is $u\mapsto(\sum_kc_k)\log u$.

\begin{lemma}\label{lem:dim7}
The $\Q$-null characters form a $\Q_p$-vector space of dimension $7$. More precisely, the following hold.
\begin{enumerate}
\item The conditions attached to the third prime above $2$ (the one with ramification index $6$) and to the prime above $7$ are consequences of the other $14$ conditions.
\item Scale the $13$ rows attached to units and to $\pi_{\fp_{2,1}},\pi_{\fp_{2,2}}$ by $1/p$; their entries then lie in $\Z_p$. The resulting $14\times21$ matrix has a $14\times14$ minor that is a $p$-adic unit.
\end{enumerate}
\end{lemma}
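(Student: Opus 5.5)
The $\Q$-null characters are the solutions $(c_1,\dots,c_{21})\in\Q_p^{21}$ of a homogeneous linear system with $1+11+4=16$ rows. So the dimension is $21-\operatorname{rank}$, and the statement reduces to showing that the rank is exactly $14$. I would prove this in two halves: an upper bound from (1) and a lower bound from (2).

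\emph{Upper bound (part (1)).} The relations should come from the prime factorisations $2=\varepsilon_2\,\pi_{2,1}^{12}\pi_{2,2}^{3}\pi_{2,3}^{6}$ and $7=\varepsilon_7\,\pi_7^{7}$, with units $\varepsilon_2,\varepsilon_7$ (Proposition~\ref{prop:F}). Since $2\in\Q$, we have $\log_k(2)=\log 2$ for every $k$, so
\[
\textstyle\sum_k c_k\log_k(2)=(\log 2)\sum_k c_k .
\]
This vanishes once the first row holds. Taking $\log_k$ of the factorisation of $2$ and summing against $c_k$ then gives
\[
6\textstyle\sum_k c_k\log_k\pi_{2,3}=-\sum_k c_k\log_k\varepsilon_2-12\sum_k c_k\log_k\pi_{2,1}-3\sum_k c_k\log_k\pi_{2,2}.
\]
The right-hand side is zero by the unit rows and the $\pi_{2,1},\pi_{2,2}$ rows, so the $\pi_{2,3}$ condition follows. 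The same argument with $7$ shows that the $\pi_7$ condition follows from the first row and the unit rows. In both cases only the torsion of the unit group is $\pm1$, whose logarithm vanishes, so writing $\varepsilon_2,\varepsilon_7$ in terms of the fundamental units is harmless. Hence the rank is at most $14$ and the dimension is at least $7$.

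\emph{Lower bound (part (2)).} Every prime above $p$ has degree $1$, and each row entry $\log_k(\alpha)$ is the logarithm of a $p$-adic unit (the units and the $\pi_v$ are prime to $p$), so it lies in $p\Z_p$. After dividing those $13$ rows by $p$, all entries are in $\Z_p$, and the first row is the all-ones vector. A $14\times14$ minor that is a $p$-adic unit gives rank $14$ over $\Q_p$. This is a finite computation. I would compute $\log_k(\alpha)/p \bmod p$ via
\[
\log(u)\equiv\frac{u^{p-1}-1}{p}\pmod{p^2}
\]
(using the unit part, $u^{p-1}\equiv1$). Concretely, I would compute each $\alpha^{p-1}$ in $\cO_F/p^2$ through the $21$ embeddings modulo $p^2$, and then find a nonzero $14\times14$ minor of the reduced matrix over $\F_p$.

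Combining the two halves, the rank is exactly $14$ and the dimension is $21-14=7$. The main obstacle is the explicit reduction in part (2). It needs the fundamental units from \texttt{bnfinit}, which are certified by Proposition~\ref{prop:F}. If no unit minor modulo $p$ existed, I would fall back to computing the rank at higher $p$-adic precision using an exact-precision argument.
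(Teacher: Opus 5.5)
Your proposal is correct and follows the paper's proof essentially step for step. Part (1) uses the factorisations $2=\varepsilon\pi_{2,1}^{12}\pi_{2,2}^{3}\pi_{2,3}^{6}$ and $7=\varepsilon'\pi_7^{7}$ together with the row $\sum_k c_k=0$, and part (2) is a finite computation exhibiting a unit $14\times14$ minor. The paper works with $p$-adic logarithms at precision $p^{40}$, whereas you only need the scaled entries modulo $p$.

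One slip to fix: the Fermat-quotient congruence should read $\log u\equiv 1-u^{p-1}\pmod{p^2}$, i.e.\ $\log(u)/p\equiv-(u^{p-1}-1)/p\pmod p$. As you wrote it, it is off by a factor of $p$ and a sign. The sign only multiplies each of the $13$ scaled rows by $-1$, so it does not change whether a minor is a unit.
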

\begin{proof}
For (1), write $2=\varepsilon\,\pi_{2,1}^{12}\pi_{2,2}^{3}\pi_{2,3}^{6}$ and $7=\varepsilon'\pi_7^7$ with units $\varepsilon,\varepsilon'$. Since $\log_k(2)=\log 2$ for all $k$, the first condition gives $\sum_kc_k\log_k(2)=0$. So
\[
6\sum_k c_k\log_k\pi_{2,3}=-\sum_k c_k\log_k\bigl(\varepsilon\pi_{2,1}^{12}\pi_{2,2}^{3}\bigr),
\]
and the right-hand side vanishes by the other conditions. The same argument applies to $\pi_7$. Here we use that $6$ and $7$ are invertible in $\Z_p$.

For (2): the logarithm of a $p$-adic unit lies in $p\Z_p$, so the scaled entries lie in $\Z_p$. We computed the matrix to absolute precision $p^{40}$. We then selected $14$ pivot columns by Gaussian elimination and found that the corresponding minor has valuation $0$. The computed entries agree with the true ones modulo $p^{39}$, so the true minor is also a unit. Hence the rank is $14$ and the dimension is $21-14=7$.
\end{proof}

A basis of the space of $\Q$-null characters is obtained by Cramer's rule. The $7$ free coordinates are set to the standard basis vectors, and the $14$ pivot coordinates are solved for. Since the minor is a unit, the exact solutions have coordinates in $\Z_p$, and they agree with the computed ones modulo $p^{38}$. All the quantities computed below are $\Q_p$-linear in $(c_k)$. So replacing the computed characters by the exact ones changes them by $O(p^{30})$ at worst, which is far beyond the valuations we use.

\begin{lemma}[Forms become functions]\label{lem:scaling}
Let $\chi$ be $\Q$-null, and let $\ell$ be a linear form with coefficients in $F$. For $P\in C(\Q_p)$, choose any representative $\tilde P\in\Q_p^3\setminus\{0\}$ with $\sigma_k(\ell)(\tilde P)\neq0$ for all $k$, and put
\[
\Phi_\ell(P)=\sum_{k=1}^{21}\chi_{\fp_k}\bigl(\sigma_k(\ell)(\tilde P)\bigr).
\]
Then $\Phi_\ell(P)$ does not depend on the choice of $\tilde P$. If $P\in C(\Q)$, $\tilde P\in\Z^3$ is primitive and $\ell(\tilde P)\ne0$, then $\Phi_\ell(P)=-\sum_{v\nmid p}\chi_v(\ell(\tilde P))$.
\end{lemma}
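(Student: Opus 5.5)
The proof has two parts: independence of the representative, and the product-formula identity for rational points.

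\emph{Independence of the representative.} Any two representatives of $P\in C(\Q_p)$ differ by a scalar $\mu\in\Q_p^\times$. Since $\ell$ is linear, $\sigma_k(\ell)(\mu\tilde P)=\mu\,\sigma_k(\ell)(\tilde P)$ for every $k$, so $\Phi_\ell$ changes by $\sum_{k=1}^{21}\chi_{\fp_k}(\mu)$. I will show this sum is $0$ using condition (i) of the definition of $\Q$-null characters. Let $\iota_p(\mu)\in\A_\Q^\times$ be the id\`ele with component $\mu$ at $p$ and $1$ elsewhere. Since $p$ splits completely in $F$, the inclusion $\A_\Q^\times\subset\A_F^\times$ sends it to the id\`ele with component $\mu$ at each of the $21$ places $\fp_k$ (each $F_{\fp_k}=\Q_p$) and $1$ elsewhere. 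Hence $\chi(\iota_p(\mu))=\sum_k\chi_{\fp_k}(\mu)$, and condition (i) makes this $0$. The same argument shows that $\Phi_\ell$ is unchanged if $\ell$ is multiplied by a rational scalar.

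\emph{The identity for rational points.} Let $P\in C(\Q)$ with $\tilde P\in\Z^3$ primitive and $\alpha=\ell(\tilde P)\in F^\times$. Then $\sigma_k(\ell)(\tilde P)=\sigma_k(\alpha)$, the image of $\alpha$ in $F_{\fp_k}$. So $\Phi_\ell(P)=\sum_{v\mid p}\chi_v(\alpha)$, and the product formula $\sum_v\chi_v(\alpha)=0$, with the archimedean components vanishing, gives $\Phi_\ell(P)=-\sum_{v\nmid p}\chi_v(\alpha)$.

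Two minor points need care. First, the hypothesis of the second statement only gives $\ell(\tilde P)\ne0$, while the definition of $\Phi_\ell$ asks for $\sigma_k(\ell)(\tilde P)\ne0$ for all $k$. For rational $\tilde P$ these agree, because the $\sigma_k$ are injective field embeddings. Second, the identity uses the global decomposition $\chi=\sum_v\chi_v$ applied to the principal id\`ele $\alpha$; this is exactly the product formula recalled in Section~\ref{sec:heights}.

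The main point to get right is the compatibility of the embedding $\A_\Q^\times\hookrightarrow\A_F^\times$ with the local components at the split prime $p$. I will state explicitly that $\Q_p\otimes_\Q F\cong\prod_k F_{\fp_k}$ with each factor equal to $\Q_p$ via $\sigma_k$, so the diagonal image of $\mu$ has component $\mu$ at every $\fp_k$. Finally, I will note that the sum over $v\nmid p$ is finite, since $\alpha$ is a unit at almost all places and $\chi_v$ vanishes on $\cO_v^\times$ for $v\nmid p$.
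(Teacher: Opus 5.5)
Your proposal is correct and follows the paper's proof: rescaling $\tilde P$ by $\mu\in\Q_p^\times$ changes $\Phi_\ell$ by the value of $\chi$ on the id\`ele of $\Q$ that is $\mu$ at $p$ and $1$ elsewhere, which is $0$ by condition~(i), and the second statement is the product formula for $\ell(\tilde P)\in F^\times$. Your extra remarks are accurate but inessential details that the paper leaves implicit: that $p$ splits completely so each $F_{\fp_k}=\Q_p$, that $\ell(\tilde P)\neq0$ implies $\sigma_k(\ell)(\tilde P)\neq0$, and that the sum over $v\nmid p$ is finite.
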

\begin{proof}
Replacing $\tilde P$ by $\mu\tilde P$ with $\mu\in\Q_p^\times$ changes $\Phi_\ell$ by $\sum_k\chi_{\fp_k}(\mu)$. This is the value of $\chi$ on the id\`ele of $\Q$ that is $\mu$ at $p$ and $1$ elsewhere, which is $0$ by condition~(i). The second statement is the product formula for $\ell(\tilde P)\in F^\times$.
\end{proof}

With the cyclotomic character we would have $\sum_k c_k\neq0$. A homogeneous form of positive degree is then not a function on $C(\Q_p)$ in this sense. This is the reason for condition~(i).

\subsection{The fixed points and the line}
Let $\ell$ be the linear form $\lambda/g$, where $g$ generates the ideal generated by the three coefficients of $\lambda$ (possible since $h_F=1$). Then the coefficients of $\ell$ generate the unit ideal, and $\{\ell=0\}=\{\lambda=0\}$. So $W=C\cap\{\ell=0\}$ is the set of fixed points of $\iota$, and $\psi$ is ramified exactly at $W$.

Let $K'=F(w_1)\cong F[X]/(g_0)$, a field of degree $4$ over $F$, and put $B_1=\psi(w_1)\in E(K')$. Let $B_1,\dots,B_4$ be its conjugates over $F$, and
\[
B_\Sigma=\Tr_{K'/F}B_1=B_1+B_2+B_3+B_4\in E(F).
\]

\begin{lemma}\label{lem:Bsigma}
Let $r\in F[x]$ be the polynomial of degree $\le3$ with $r(x(B_j))=y(B_j)$ for $j=1,\dots,4$. Substituting $y=r(x)$ into the equation of $E_m$ gives a sextic $H(x)$, divisible by the minimal polynomial $m_B$ of $x(B_1)$ over $F$. Write $H/m_B=Q_2x^2+Q_1x+Q_0$, and $r\equiv\alpha x+\beta\pmod{H/m_B}$. Then $B_\Sigma=(x_3,\alpha x_3+\beta)$, where $x_3=\alpha^2+a_1\alpha-a_2+Q_1/Q_2$.
\end{lemma}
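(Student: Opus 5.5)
The plan is the classical chord-and-tangent argument with a cubic curve. The function $y-r(x)$ on $E_m$ has its only pole at $O$, of order $6$ (from $y$, since $\deg r\le3$ gives a pole of order at most $6$ from $r(x)$), so it has exactly six zeros counted with multiplicity. Their $x$-coordinates are the roots of $H(x)$, obtained by substituting $y=r(x)$ into the Weierstrass equation. In $\operatorname{Pic}^0$ the principal divisor $\operatorname{div}(y-r(x))$ is $\sum(\text{zeros})-6(O)$, so the six zeros sum to $O$ in the group law. Four of the zeros are $B_1,\dots,B_4$, since $r(x(B_j))=y(B_j)$. The remaining two zeros, $D_1$ and $D_2$, satisfy $D_1+D_2=-B_\Sigma$.

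First we check that $H$ has degree exactly $6$. Its leading coefficient is $r_3^2$ when $\deg r=3$; if $\deg r<3$, then $H$ is monic of degree $3$ up to sign, which cannot be divisible by the quartic $m_B$. We take $\deg r=3$ as part of the hypothesis that $H$ is a sextic, and $Q_2=r_3^2\neq0$. Next, $m_B\mid H$: the $x(B_j)$ are roots of $H$, and they are distinct because the $B_j$ are conjugate and $m_B$ is irreducible of degree $4$. Here we need $x(B_1)$ to generate $K'$, since otherwise the interpolating $r$ would not be well defined. We verify this by the same irreducibility computation as for $g_0$ in Proposition~\ref{prop:iota}, transported through $\psi$.

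The quotient $H/m_B=Q_2x^2+Q_1x+Q_0$ has roots $x(D_1),x(D_2)$, and these two points lie on the curve $y=r(x)$. On these two points $r(x)=\alpha x+\beta$, because $r-(\alpha x+\beta)$ is divisible by $H/m_B$. So $D_1,D_2$ lie on the line $y=\alpha x+\beta$. The third intersection $D_3$ of this line with $E_m$ satisfies $D_1+D_2+D_3=O$, so $D_3=-(D_1+D_2)=B_\Sigma$.

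The standard chord formula gives $x(D_3)=\alpha^2+a_1\alpha-a_2-x(D_1)-x(D_2)$, and by Vieta $x(D_1)+x(D_2)=-Q_1/Q_2$. This yields the stated $x_3$, and $D_3$ lies on the line, so $y(D_3)=\alpha x_3+\beta$.

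The main obstacle is degenerate configurations: repeated roots among $D_1,D_2,B_j$, or $D_i=O$. Repeated roots are harmless because the divisor argument counts multiplicities, and a tangent line still gives the same formula via the multiplicity-two intersection. The vertical-line case cannot occur, because the chord is the graph $y=\alpha x+\beta$. We flag that a zero at $O$ is excluded because $H$ has exact degree $6$. If $\deg r<3$, the statement's hypothesis fails, and we would simply note that the computation confirmed $\deg r=3$.
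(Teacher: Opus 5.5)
Your argument is correct and matches the paper's proof: the divisor of $y-r(x)$ is $\sum_j B_j+D_1+D_2-6(O)$, the chord $y=\alpha x+\beta$ through $D_1,D_2$ meets $E_m$ a third time at $-(D_1+D_2)=B_\Sigma$, and Vieta applied to $H/m_B$ gives $x_3$. Two slips do not affect the argument. First, the order-$6$ pole comes from $r(x)$, not from $y$. Second, when $\deg r=2$ the polynomial $H$ is a quartic with leading coefficient $r_2^2$, not a cubic; the sextic hypothesis excludes this case anyway.
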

\begin{proof}
The function $y-r(x)$ has divisor $\sum_jB_j+D_1+D_2-6(O)$, where $x(D_1),x(D_2)$ are the roots of $H/m_B$. So $B_\Sigma=-(D_1+D_2)$. The line $y=\alpha x+\beta$ passes through $D_1$ and $D_2$, and its third intersection point with $E$ is $-(D_1+D_2)=B_\Sigma$. Its $x$-coordinate is $\alpha^2+a_1\alpha-a_2-x(D_1)-x(D_2)$.
\end{proof}

We computed $r$ by linear algebra in $K'=F[X]/(g_0)$, and $m_B=\Res_X(g_0(X),x-x(B_1)(X))$. We checked that $B_\Sigma$ lies on $E_m$, and that $\sigma_k(B_\Sigma)=\sum_jB_j^{(k)}$ in $E(\Q_p)$ to precision $p^{35}$ for all $k$, where the $B_j^{(k)}$ are defined below.

\begin{lemma}[Good places]\label{lem:goodplaces}
Let $\chi$ be $\Q$-null, let $\chi'=\chi\circ N_{K'/F}$, and let $v$ be a prime of $F$ with $v\nmid14p$. Let $P\in C(\Q)$ have primitive integral representative $\tilde P$. Then
\[
\sum_{w\mid v}\lambda^{\chi'}_w\bigl(\psi(P)-B_1\bigr)=2\,\chi_v\bigl(\ell(\tilde P)\bigr),
\]
where $w$ runs over the places of $K'$ above $v$.
\end{lemma}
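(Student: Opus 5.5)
The plan is to identify both sides with intersection numbers on smooth models over $\cO_v$ and compare them using the double cover. Put $R=\psi(P)-B_1\in E(K')$. For $w\mid v$ we have $w\notin S'$ and $w\nmid p$, so by definition (ii), $\lambda^{\chi'}_w(R)=\chi'_w(\pi_w)\mu_w(R)$. As in the proof of Proposition~\ref{prop:basechange}, $\chi'_w(\pi_w)=f_w\chi_v(\pi_v)$. By translation invariance on the N\'eron model (as in the proof of Lemma~\ref{lem:localprops}), $\mu_w(R)=(\psi(P)\cdot B_1)_w$ is the intersection number of the sections $\psi(P)$ and $B_1$ over $\cO_{K'_w}$; note that $\psi(P)\neq B_1$, since $w_1$ is not $F$-rational. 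Summing over $w\mid v$ and using $K'\otimes_F F_v=\prod_{w\mid v}K'_w$, the left-hand side becomes
\[
\chi_v(\pi_v)\sum_{w\mid v}f_w(\psi(P)\cdot B_1)_w=\chi_v(\pi_v)\,\bigl(\psi(P)\cdot\mathcal B\bigr)_v .
\]
Here $\mathcal B$ is the horizontal divisor on $\mathcal E/\cO_v$ given by the closure of the $F$-rational degree-$4$ divisor $B_1+\dots+B_4$, and $(\ \cdot\ )_v$ is intersection over $\cO_v$. The step is Galois descent for intersection numbers, and I would state it as a short sublemma.

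Next I transport this to $C$. By Lemma~\ref{lem:smooth}, $C$ has a smooth proper model $\mathcal C$ over $\cO_v$, and $\mathcal E$ is an abelian scheme over $\cO_v$. A rational map from a smooth $\cO_v$-scheme to an abelian scheme extends to a morphism (the N\'eron/Weil extension property), so $\psi$ extends to $\Psi\colon\mathcal C\to\mathcal E$. The morphism $\Psi$ is finite, being proper and quasi-finite: it has degree $2$ on both fibres, so no fibre component is contracted. It is also flat, since both schemes are regular of dimension $2$. The section $\tilde P$ of $\mathcal C$ maps to the section $\psi(P)$. By the projection formula,
\[
(\psi(P)\cdot\mathcal B)_v=(\tilde P\cdot\Psi^*\mathcal B)_v .
\]

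I then claim that $\Psi^*\mathcal B=2\mathcal W$, where $\mathcal W$ is the closure of $W$ in $\mathcal C$. On the generic fibre, $\psi^{-1}(B_j)=\{w_j\}$ with multiplicity $2$, because $\psi$ is ramified at the fixed points, and the $B_j$ are distinct because $g_0$ is irreducible. So $\psi^*(B_1+\dots+B_4)=2W$. Since $\Psi$ is flat, $\Psi^*\mathcal B$ has no vertical components, so it equals the closure $2\mathcal W$. Finally, $\mathcal W$ is the Cartier divisor $\{\ell=0\}\cap\mathcal C$. This uses that the coefficients of $\ell$ generate the unit ideal and that the special fibre of $\mathcal C$ is an irreducible smooth quartic: hence $\ell$ does not vanish identically on it and cuts out no vertical component. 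For primitive $\tilde P$, the intersection number of the section $\tilde P$ with $\{\ell=0\}$ is therefore $v(\ell(\tilde P))$. Combining, the left-hand side equals $2v(\ell(\tilde P))\chi_v(\pi_v)=2\chi_v(\ell(\tilde P))$, as required.

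The main obstacle is the middle step: justifying that $\psi$ extends to a finite flat $\Psi$ on the smooth models and that $\Psi^*\mathcal B=2\mathcal W$ holds integrally, with no extra vertical or special-fibre contribution. I would handle it as above, using the extension property into abelian schemes together with flatness of finite maps between regular surfaces. As a fallback, I would extend the reflection $\iota$ over $\cO_v$ (for $v\nmid 2$ and $v$ prime to the denominators of $Q$ and $\lambda$) and argue with the quotient directly. The Galois-descent identity in the first step is routine.
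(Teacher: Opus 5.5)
Your proposal is correct and follows essentially the same route as the paper's proof: local heights become intersection numbers via translation by $B_1$ on the N\'eron model, and the sum over $w\mid v$ becomes the intersection with the closure $\overline B$ of $\{B_1,\dots,B_4\}$ (a length/norm computation, valid even if its coordinate ring is not the maximal order); then $\psi$ extends to a finite flat $\Psi$, the projection formula applies, $\Psi^*\overline B=2\overline W$, and the primitive form $\ell$ cuts out $\overline W$. The only cosmetic difference is that you exclude vertical components of $\Psi^*\overline B$ using flatness of $\Psi$, whereas the paper notes that $\Psi(\mathcal C_s)$ is a curve and so is not contained in the finite set $\overline B\cap\mathcal E_s$; both arguments work.
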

\begin{proof}
Let $\cO=\cO_{F,v}$ and let $\mathcal C/\cO$ be the plane model of $C$. It is smooth by Lemma~\ref{lem:smooth}, since the residue characteristic of $v$ does not divide $14$. Let $\mathcal E/\cO$ be the minimal model of $E$. It is smooth and proper, and it is the N\'eron model of $E$ over $\cO$.

\emph{Step 1: the map extends.} By the N\'eron mapping property \cite[\S1.2, Definition~1]{BLR}, $\psi$ extends to a morphism $\Psi\colon\mathcal C\to\mathcal E$. The line bundle $\Psi^*\cO_{\mathcal E}(O)$ has degree $2$ on the generic fibre, hence also on the special fibre, since $\mathcal C$ is flat and proper over $\cO$. So $\Psi$ does not map the special fibre to a point. Hence $\Psi$ is quasi-finite, and, being proper, finite. A finite morphism between regular schemes of dimension $2$ is flat. The section $\tilde P$ of $\mathcal C$ maps to the section $\Psi(\tilde P)$ of $\mathcal E$.

\emph{Step 2: local heights as intersection numbers.} For a point $R$ over a finite extension with good reduction, definition~(ii) says that $\lambda_w(R)=\chi'_w(\pi_w)(R\cdot O)_w$, the intersection number being computed on the smooth model over $\cO_{K'_w}$. Translation by $B_1$ is an automorphism of the N\'eron model. Hence $\lambda_w(\psi P-B_1)=\chi'_w(\pi_w)(\psi P\cdot B_1)_w$, and $\chi'_w(\pi_w)=f_w\chi_v(\pi_v)$.

\emph{Step 3: sum over $w$.} Let $\overline B\subset\mathcal E$ be the closure of the closed point $\{B_1,\dots,B_4\}$ of the generic fibre. It is the spectrum of a finite flat $\cO$-algebra $A\subset K'\otimes_FF_v$ of rank $4$, possibly not a maximal order. Let $g$ be a local equation of the section $\Psi(\tilde P)$ (an effective Cartier divisor on the regular surface $\mathcal E$). The intersection number of $\overline B$ with that section is
\[
\mathrm{length}_\cO(A/gA)=v\bigl(N_{A/\cO}(g)\bigr)=\sum_{w\mid v}f_w\,w(g)=\sum_{w\mid v}f_w(\psi P\cdot B_1)_w .
\]
So $\sum_w\lambda_w(\psi P-B_1)=\chi_v(\pi_v)\,(\Psi(\tilde P)\cdot\overline B)$.

\emph{Step 4: projection formula.} By the projection formula for the finite flat morphism $\Psi$ \cite[\S9.2]{Liu}, $(\Psi(\tilde P)\cdot\overline B)=(\tilde P\cdot\Psi^*\overline B)$. On the generic fibre, $\psi^*(B_1+\dots+B_4)=2W$, since $\psi$ is ramified at each $w_j$. Also, $\Psi^*\overline B$ has no vertical component, because $\Psi(\mathcal C_s)$ is a curve and so is not contained in the finite set $\overline B\cap\mathcal E_s$. So $\Psi^*\overline B=2\overline W$, where $\overline W$ is the closure of $W$.

\emph{Step 5: the line.} The divisor of the section $\ell$ of $\cO(1)$ on $\mathcal C$ has generic fibre $W$. It has no vertical component, because $\ell$ is primitive and the special fibre is an irreducible quartic, not contained in a line. So it equals $\overline W$. Hence $(\tilde P\cdot\overline W)=v(\ell(\tilde P))$ for the primitive integral representative $\tilde P$.

Together, the sum equals $2\chi_v(\pi_v)v(\ell(\tilde P))=2\chi_v(\ell(\tilde P))$.
\end{proof}

\subsection{The identity}
Every $\fp_k$ splits completely in $K'$, since $g_0$ splits into distinct linear factors modulo each $\fp_k$. Write $B^{(k)}_1,\dots,B^{(k)}_4\in E(\Q_p)$ for the images of $B_1$ under the four embeddings $K'\to\Q_p$ extending $\sigma_k$. For $P\in C(\Q)$ with primitive integral representative $\tilde P$, define
\begin{equation}\label{eq:rho}
\rho_\chi(P)=\sum_{k=1}^{21}\Bigl(\sum_{j=1}^{4}\lambda_{\fp_k}\bigl(\psi(P)-B^{(k)}_j\bigr)-2\chi_{\fp_k}\bigl(\sigma_k(\ell)(\tilde P)\bigr)\Bigr)\;-\;4h(\psi P)\;+\;2\langle\psi P,B_\Sigma\rangle .
\end{equation}
The height $h$ and the pairing are those of Proposition~\ref{prop:global} for $\chi$ on $E(F)$; its hypotheses hold because $S_1$ has infinite order (Proposition~\ref{prop:lattice}(1)).

\begin{theorem}\label{thm:identity}
Let $\chi$ be a $\Q$-null character. Then $\rho_\chi(P)=\rho_\chi(b)$ for every $P\in C(\Q)$.
\end{theorem}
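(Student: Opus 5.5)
The plan is to compute the global $p$-adic height of the point $\psi(P)-B_1\in E(K')$ in two ways, with respect to the character $\chi'=\chi\circ N_{K'/F}$ and the set $S'$ of primes of $K'$ above $2$ and $7$. The first way uses the quadratic structure of Proposition~\ref{prop:global} together with base change (Proposition~\ref{prop:basechange}). The second way adds up local terms using Lemma~\ref{lem:goodplaces} and Lemma~\ref{lem:scaling}. Comparing the two should give $\rho_\chi(P)=h'(B_1)$, which does not depend on $P$, and the theorem follows by applying this also to $P=b$.

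\emph{Step 1: hypotheses.} First I check that Proposition~\ref{prop:global} applies over $K'$.
\begin{itemize}
\item $\chi'$ is an id\`ele class character of $K'$, and $\chi'_w=\chi_v\circ N$ vanishes at all $w\in S'$, because $\chi$ vanishes at the primes above $2$ and $7$.
\item $E_m$ is minimal with good reduction outside $S'$.
\item $E_m$ is ordinary at each $\fp_k$. Indeed, $\#\widetilde E(\F_p)=5728\neq p+1$, so the trace of Frobenius is nonzero modulo $p$.
\item $E(K')$ contains the point $S_1$ of infinite order.
\end{itemize}
Next I check that the relevant points and values are nonzero for $P\in C(\Q)$.
\begin{itemize}
\item $\ell(\tilde P)\neq0$. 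The points of $W$ are conjugate over $F$ because $g_0$ is irreducible (Proposition~\ref{prop:iota}), so none of them is rational.
\item $\psi(P)\neq B_j$. The map $\psi$ is ramified at $w_j$, so $\psi^{-1}(B_j)=\{w_j\}$ and $P\neq w_j$.
\end{itemize}

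\emph{Step 2: algebraic side.} By biadditivity and Proposition~\ref{prop:basechange}, applied with $\psi P\in E(F)$,
\[
h'(\psi P-B_1)=h'(\psi P)-2\langle\psi P,B_1\rangle'+h'(B_1)=4h(\psi P)-2\langle\psi P,B_\Sigma\rangle+h'(B_1),
\]
since $[K':F]=4$ and $\Tr_{K'/F}B_1=B_\Sigma$.

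\emph{Step 3: local side.} I split $h'(\psi P-B_1)=\sum_w\lambda_w(\psi P-B_1)$ by the place $v$ of $F$ lying below $w$.
\begin{itemize}
\item Archimedean places and places in $S'$ contribute $0$, by the definition of the local heights.
\item For each $v\nmid14p$, Lemma~\ref{lem:goodplaces} gives a contribution of $2\chi_v(\ell(\tilde P))$.
\item Each $\fp_k$ splits completely in $K'$. The four places $w\mid\fp_k$ correspond to the four embeddings $K'\to\Q_p$ extending $\sigma_k$, and these send $B_1$ to $B^{(k)}_1,\dots,B^{(k)}_4$. Since $K'_w=\Q_p$ and $\chi'_w=\chi_{\fp_k}$, the places above $\fp_k$ contribute $\sum_j\lambda_{\fp_k}(\psi P-B^{(k)}_j)$. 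Here I must make sure that the same model, sigma function and constants $c_k$ are used on both sides.
\end{itemize}
Finally, $\sum_{v\nmid p}2\chi_v(\ell(\tilde P))=-2\sum_k\chi_{\fp_k}(\sigma_k(\ell)(\tilde P))$. This follows from Lemma~\ref{lem:scaling}, using that $\chi_v=0$ for $v$ above $2$ and $7$ and at the archimedean places. Hence $h'(\psi P-B_1)$ equals the first sum in \eqref{eq:rho}.

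\emph{Step 4: conclusion.} Subtracting the two expressions for $h'(\psi P-B_1)$ gives $\rho_\chi(P)=h'(B_1)$ for every $P\in C(\Q)$, including $P=b$. So $\rho_\chi(P)=\rho_\chi(b)$.

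The main obstacle I expect is Step 3 at the places above $p$. There I must match the places of $K'$ above $\fp_k$ with the embeddings, and confirm that the local height on $E(K'_w)=E(\Q_p)$ for $\chi'$ is literally $\lambda_{\fp_k}$ evaluated at the embedded point. The rest follows from results already proved in the paper.
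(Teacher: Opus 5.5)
Your proposal is correct and is essentially the paper's proof. Both compute $h'(\psi(P)-B_1)$ for $\chi'=\chi\circ N_{K'/F}$ once via bilinearity and Proposition~\ref{prop:basechange}, giving $4h(\psi P)-2\langle\psi P,B_\Sigma\rangle+h'(B_1)$, and once as a sum of local terms using Lemma~\ref{lem:goodplaces}, the complete splitting of each $\fp_k$ in $K'$ and the product formula, which yields $\rho_\chi(P)=h'(B_1)$; your additional checks (ordinarity from $a_p=-146$, $\ell(\tilde P)\neq0$, and $\psi(P)\neq B_j$ by ramification) are correct and are only implicit or briefer in the paper.
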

\begin{proof}
Let $P\in C(\Q)$. Then $P\notin W$, because $g_0$ is irreducible of degree $4$ over $F$, so $W$ has no $F$-points. So $R=\psi(P)-B_1$ is a nonzero point of $E(K')$. Proposition~\ref{prop:global} applies over $K'$, since $E(K')\supseteq E(F)$ contains the point $S_1$ of infinite order.

Let $h'$ be the height on $E(K')$ attached to $\chi'=\chi\circ N_{K'/F}$ and the primes above $S$. We compute $h'(R)$ in two ways.

\emph{Local.} We have $h'(R)=\sum_w\lambda_w(R)$.
\begin{itemize}
\item Places above $2$ and $7$ contribute $0$.
\item Each $\fp_k$ splits completely in $K'$, and the four places above it correspond to the four embeddings. So these places contribute $\sum_j\lambda_{\fp_k}(\psi(P)-B_j^{(k)})$.
\item By Lemma~\ref{lem:goodplaces}, the other places contribute
\[
2\sum_{v\nmid14p}\chi_v(\ell(\tilde P))=2\sum_{v\nmid p}\chi_v(\ell(\tilde P))=-2\sum_k\chi_{\fp_k}(\sigma_k(\ell)(\tilde P)).
\]
Here we used $\chi_v=0$ above $14$ and at the archimedean places, and then the product formula.
\end{itemize}

\emph{Global.} By bilinearity and Proposition~\ref{prop:basechange},
\[
h'(R)=h'(\psi P)-2\langle\psi P,B_1\rangle'+h'(B_1)=4h(\psi P)-2\langle\psi P,B_\Sigma\rangle+h'(B_1).
\]
Comparing the two expressions gives $\rho_\chi(P)=h'(B_1)$, which does not depend on $P$.
\end{proof}

\begin{remark}
In the proof of Theorem~\ref{thm:identity} we apply the construction of Section~\ref{sec:heights} over $K'$, whose class number we have not computed. The hypothesis $h_K=1$ was used there only for the explicit description of characters (Lemma~\ref{lem:chars}) and for \eqref{eq:hglob}. Neither is needed for $\chi'$, which is given as $\chi\circ N$. The definitions (i)--(iii), Lemmas~\ref{lem:localprops} and~\ref{lem:quadratic} and Proposition~\ref{prop:global} do not use the class number.
\end{remark}

\subsection{Extension to residue discs}\label{subsec:discs}
To use Theorem~\ref{thm:identity}, we need $\rho_\chi$ as an analytic function on residue discs. Let $S_1,S_2,S_3$ be as in Proposition~\ref{prop:lattice}. Put
\[
\mathcal G_{ij}=\langle S_i,S_j\rangle,\qquad \mathcal L_i=\langle S_i,B_\Sigma\rangle,\qquad \mathcal M_{ik}=\log_{E,k}(S_i)\quad(1\le i,k\le3).
\]
Here $\log_{E,k}\colon E(\Q_p)\to\Q_p$ is the elliptic logarithm of $\sigma_k(E_m)$ attached to its invariant differential. It is the formal logarithm on the formal group, extended by $\log(R)=\log(nR)/n$.

\begin{proposition}\label{prop:analytic}
Let $P_0\in C(\Q)$ and let $D$ be its residue disc modulo $p$. Assume the following:
\begin{enumerate}[label=(\alph*)]
\item $P_0=[x_0:y_0:1]$ and $\partial G/\partial y(P_0)\not\equiv0\bmod p$, so that $s=x-x_0$ is a parameter on $D$ with $s\in p\Z_p$;
\item $\sigma_k(\ell)(x_0,y_0,1)$ is a $p$-adic unit for all $k$;
\item $\psi_k(P_0)\not\equiv B_j^{(k)}\pmod p$ for all $k,j$;
\item the matrix $\mathcal M$ is invertible.
\end{enumerate}
For $P=[x:y:1]\in D$, put
\[
n(P)=n(P_0)+(\mathcal M^{T})^{-1}\Bigl(\int_{P_0}^{P}\psi_k^*\omega_k\Bigr)_{k=1,2,3}\in\Q_p^3 ,
\]
where $\omega_k$ is the invariant differential of $\sigma_k(E_m)$ and $\psi_k=\sigma_k\circ\psi$; the integral is taken along the disc. Put also
\begin{equation}\label{eq:rhoan}
\rho^{\rm an}_\chi(P)=\sum_{k}\Bigl(\sum_j\lambda_{\fp_k}(\psi_k(P)-B_j^{(k)})-2c_k\log\sigma_k(\ell)(x,y,1)\Bigr)-4\,n(P)^T\mathcal G\,n(P)+2\,\mathcal L\,n(P).
\end{equation}
Then $\rho^{\rm an}_\chi$ is given on $D$ by a power series in $s$ that converges on $p\Z_p$, and $\rho^{\rm an}_\chi(P)=\rho_\chi(P)$ for every $P\in C(\Q)\cap D$.
\end{proposition}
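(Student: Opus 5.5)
The proof has two parts: analyticity of each term of \eqref{eq:rhoan} on $D$, and agreement with \eqref{eq:rho} at rational points.

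\emph{Analyticity.} By (a), the implicit function theorem (Hensel) gives $y=y(s)\in y_0+\Z_p[[s]]$, convergent on $p\Z_p$. So $x,y$ and every polynomial in them with $\Z_p$-coefficients are analytic on $D$. We treat the terms of \eqref{eq:rhoan} one at a time.

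\begin{itemize}
\item The term $2c_k\log\sigma_k(\ell)(x,y,1)$ is analytic by (b): the argument is a unit on all of $D$, so $\log$ of it is a convergent power series in $s$. (Split off the Teichm\"uller part, which does not depend on $s$.)
\item For the local heights, note that $\psi_k$ is given on $D$ by the chart. The chart's denominators are units at $P_0$ after clearing by suitable factors, or one uses a different chart. So $\psi_k(P)$ has coordinates in $\Z_p[[s]]$, or at worst lies in a fixed residue class of $E(\Q_p)$. By (c), $R_{kj}(s)=\psi_k(P)-B_j^{(k)}$ stays in a fixed residue class different from $\bar O$. Hence $x(R_{kj})$ is integral and analytic in $s$.
\item Take $n=\#\widetilde E(\F_p)$, which is prime to $p$. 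By Lemma~\ref{lem:unitarg}, $\lambda_{\fp_k}(R_{kj})=n^{-2}c_k\log(\sigma_k(nR_{kj})/\psi_n(R_{kj}))$. Here $\psi_n(R_{kj})$ is a polynomial in the coordinates and hence analytic. The point $nR_{kj}$ lies in $E_1$ with $t(nR_{kj})\in p\Z_p[[s]]$, and $\sigma\in\Z_p[[t]]$ converges there. The quotient is a unit on all of $D$, so its logarithm is analytic.
\item The map $n(P)$ is analytic because $\psi_k^*\omega_k$ is a differential on $C$ that is regular on $D$. Writing it as $f(s)\,ds$ with $f\in\Z_p[[s]]$, its integral from $s=0$ converges on $p\Z_p$. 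The quadratic and linear expressions in $n(P)$ are then analytic.
\end{itemize}

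\emph{Agreement at rational points.} Let $P\in C(\Q)\cap D$.

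\begin{itemize}
\item \emph{The $\ell$-terms.} By Lemma~\ref{lem:scaling}, the sum $\sum_k\chi_{\fp_k}(\sigma_k(\ell)(\tilde P))$ may be computed with the representative $(x,y,1)$ instead of the primitive integral one. By (b) these values are units, so $\chi_{\fp_k}=c_k\log$ on them. These terms therefore agree.
\item \emph{The local heights} agree by definition.
\item \emph{The global terms.} We must show that $n(P)$ from the integral equals the true coordinate vector \eqref{eq:npcoords}. Write $\psi(P)=\sum n_iS_i+n_4T$ with $n_i\in\Z_{(2\cdot179)}$. Since $\log_{E,k}$ is a homomorphism killing torsion, $\log_{E,k}(\psi_k(P))=\sum_i n_i\mathcal M_{ik}$, and similarly for $P_0$. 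Because the Coleman integral of an invariant differential pulled back along a morphism equals a difference of elliptic logarithms on a disc, we get $\int_{P_0}^P\psi_k^*\omega_k=\log_{E,k}\psi_k(P)-\log_{E,k}\psi_k(P_0)$. Then (d) gives $n(P)-n(P_0)$ as the true difference. Finally, bilinearity and Proposition~\ref{prop:global} (torsion pairs to zero) give $h(\psi P)=n^T\mathcal G n$ and $\langle\psi P,B_\Sigma\rangle=\mathcal L n$.
\end{itemize}

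\emph{Main obstacle.} The delicate step is the analyticity of the sigma term. It requires that $nR_{kj}(s)$ land uniformly in $E_1$ with a parameter $t$ analytic and divisible by $p$, and that the chart for $\psi_k$ have no pole on $D$. I would handle this via the smooth model: $\psi$ extends to the integral model at $\fp_k$ by Lemma~\ref{lem:smooth} and the N\'eron property, as in Lemma~\ref{lem:goodplaces}. So locally on $D$, $\psi_k$ is given by integral power series in some affine chart of $E_m$. One then argues with the formal-group parameter at the reduction point.
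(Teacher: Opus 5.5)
Your proposal follows essentially the paper's proof. You extend $\psi_k$ and $Q\mapsto\psi_k(Q)-B_j^{(k)}$ to the smooth models over $\Z_p$ by the N\'eron property, use Lemma~\ref{lem:unitarg} for the local heights and integrality of $\psi_k^*\omega_k$ for $n(P)$, and get agreement at rational points from Lemma~\ref{lem:scaling}, the elliptic-logarithm identity, (d) and bilinearity.

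The one inference to repair is ``the quotient is a unit on all of $D$, so its logarithm is analytic''. Pointwise unit values do not make the quotient a power series, and $\psi_n(R(s))$ is not a unit series. Note also that $t(nR(s))$ is not in $p\Z_p[[s]]$; only its constant term lies in $p\Z_p$. Instead, argue as the paper does via $\sigma(nR)/\psi_n(R)=\bigl(\sigma(t)/t\bigr)\big|_{t=t(nR)}\cdot\bigl(-\phi_n(R)/\omega_n(R)\bigr)$. Here $\sigma(t)/t\in1+t\Z_p[[t]]$, and $\phi_n(R(s))$ and $\omega_n(R(s))$ have unit constant terms, so the quotient lies in $\Z_p[[s]]^\times$.
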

\begin{proof}
Let $\mathcal C_{\Z_p}$ be the plane model, which is smooth over $\Z_p$, and let $\mathcal E_k$ be the smooth model of $\sigma_k(E_m)$ over $\Z_p$. By (a), $s$ is a local parameter of $\mathcal C_{\Z_p}$ at $\bar P_0$. So the completed local ring there is $\Z_p[[s]]$, and $y$ is a power series in $s$ with $\Z_p$-coefficients. The map $\psi_k$ extends to a morphism $\mathcal C_{\Z_p}\to\mathcal E_k$ (N\'eron mapping property), and so does $Q\mapsto\psi_k(Q)-B_j^{(k)}$.

\emph{Local terms.} By (c), the point $\bar R_0=\psi_k(\bar P_0)-\bar B_j^{(k)}$ lies in the affine part of the reduction. So the coordinates of $R(s)=\psi_k(P(s))-B_j^{(k)}$ lie in $\Z_p[[s]]$. Let $n=\#\widetilde E(\F_p)$. Then $t(nR(s))\in\Z_p[[s]]$ with constant term in $p\Z_p$. By the proof of Lemma~\ref{lem:unitarg}, $t(nR)/\psi_n(R)=-\phi_n(R)/\omega_n(R)$, where $\phi_n(R(s))$ and $\omega_n(R(s))$ have unit constant terms. So $\sigma(nR(s))/\psi_n(R(s))\in\Z_p[[s]]^\times$. Hence $\lambda_{\fp_k}(R(s))$ is $c_k/n^2$ times the logarithm of a unit power series, and so it is a power series converging on $p\Z_p$. The same holds for $\log\sigma_k(\ell)(x,y,1)$, by (b).

\emph{Heights.} The differential $\omega_k$ is regular on $\mathcal E_k$, so $\psi_k^*\omega_k=f_k(s)\,ds$ with $f_k\in\Z_p[[s]]$. Its integral is a power series converging on $p\Z_p$. Moreover
\[
\log_{E,k}(\psi_k(P))=\log_{E,k}(\psi_k(P_0))+\int_{P_0}^P\psi_k^*\omega_k ,
\]
since both sides are analytic on $D$, have the same derivative, and agree at $P_0$.

\emph{Rational points.} Let $P\in C(\Q)\cap D$. Its representative $(x,y,1)$ differs from the primitive integral one by a rational factor, so by Lemma~\ref{lem:scaling} the $\ell$-terms agree with those in \eqref{eq:rho}. They are units by (b), so $\chi_{\fp_k}=c_k\log$ on them. By \eqref{eq:npcoords}, $\psi(P)=\sum n_iS_i+n_4T$. Since $\log_{E,k}$ is a homomorphism that kills torsion, and $\psi(P_0)=\sum n(P_0)_iS_i$, we get $\mathcal M^T(n_1,n_2,n_3)^T=\mathcal M^Tn(P_0)^T+(\int_{P_0}^P\psi_k^*\omega_k)_k$. So $n(P)=(n_1,n_2,n_3)$ by (d). Finally, $h(\psi P)=n^T\mathcal Gn$ and $\langle\psi P,B_\Sigma\rangle=\mathcal Ln$, because $h$ is a quadratic form and $T$ pairs trivially with everything (Proposition~\ref{prop:global}).
\end{proof}

\begin{remark}[A heuristic for why the identity is not trivial]\label{rem:whynontrivial}
A priori one might worry that $\rho^{\rm an}_\chi$ is constant on $C(\Q_p)$. Then Theorem~\ref{thm:identity} would carry no information. In our case this is ruled out by direct computation (Section~\ref{sec:computation}). The following heuristic, in the spirit of Kim \cite{Kim09} and Coates--Kim \cite{CoatesKim}, suggests why non-constant functions should exist. It is not used in the proof.

Over $\overline{\Q}$ we have $J\sim E_0^3$. So $\NS(J_{\overline\Q})\otimes\Q$ is the space of $3\times3$ Hermitian matrices over $\Q(\sqrt{-7})$, of dimension $9$. Let $W$ be the complement of the polarisation class, of dimension $8$. Then $W^{G_\Q}=0$, because $\NS(J)\otimes\Q=\Q$.

Complex conjugation acts on $W$ with eigenvalues $+1$ (multiplicity $5$) and $-1$ (multiplicity $3$). Indeed, $\NS(J_{\R})\otimes\Q$ is the Rosati-symmetric part of a $9$-dimensional central simple $\Q$-algebra with positive involution, which has dimension $6$.

For the quotient of the unipotent fundamental group that is an extension of the abelianisation by $W(1)$:
\begin{itemize}
\item the global Selmer group of $W(1)$ has dimension $\dim W^{c=+1}-\dim W^{G_\Q}=5$, by Dirichlet's unit theorem;
\item the local one has dimension $\dim W=8$.
\end{itemize}
So one expects a global Selmer variety of dimension at most $r+5=8$ inside a local one of dimension $g+8=11$. The $\rho_\chi$ should be functions of this kind. We have not proved that they come from this Selmer variety.
\end{remark}

\section{The computation at $p=5581$}\label{sec:computation}

\subsection{Local data}
The prime $p=5581$ satisfies $p\equiv2\pmod 7$. Since $2$ is a square modulo $7$, $p$ splits in $\Q(\sqrt{-7})$. So the CM curve $E_0$ = \texttt{49a1} has good ordinary reduction at $p$, with $a_p(E_0)=-146$ and $\#E_0(\F_p)=5728=2^5\cdot179$.

For each of the $21$ embeddings $\sigma_k$, the curve $\sigma_k(E_m)/\Q_p$ has $j=-3375$ and good reduction. We found an explicit isomorphism $E_0\to\sigma_k(E_m)$ over $\Q_p$, given by $[u_k,r_k,s_k,t_k]$ with $u_k\in\Z_p^\times$, in all $21$ cases. For this we need $u_k^2=(c_6(E_0)/c_6)/(c_4(E_0)/c_4)$ to be a square in $\Q_p$, and it is; no quadratic twist is needed. We checked the isomorphisms to precision $p^{40}$.

By Lemma~\ref{lem:changemodel}, the canonical sigma function of $\sigma_k(E_m)$ is $u_k\cdot\sigma_0$, where $\sigma_0$ is that of $E_0$. For $E_0$ at $p=5581$, \texttt{ellpadics2} returns $c=O(p^{40})$. We computed $\sigma_0$ from \eqref{eq:sigmadef} to $t$-adic precision $30$ and checked that its coefficients are $p$-integral. The division polynomial values $\psi_n(R)$ for $n=5728$ were computed on $\sigma_k(E_m)$ itself, by the standard recursions
\[
\psi_{2k+1}=\psi_{k+2}\psi_k^3-\psi_{k-1}\psi_{k+1}^3,\qquad \psi_{2k}=(\psi_{k+2}\psi_{k-1}^2-\psi_{k-2}\psi_{k+1}^2)\psi_k/\psi_2 .
\]
So all local heights are those of the model $E_m$, and Remark~\ref{rem:model} is not needed.

\subsection{Characters}
By Lemma~\ref{lem:dim7}, the $\Q$-null characters form a $7$-dimensional space. We computed a basis $\chi^{(1)},\dots,\chi^{(7)}$, given by vectors $c^{(i)}\in\Z_p^{21}$ normalised to have minimal valuation $0$. They satisfy all $16$ defining conditions to precision $p^{39}$.

\subsection{Global data}
Using \eqref{eq:hglob}, we computed for each basis character the Gram matrix $\mathcal G$ of $S_1,S_2,S_3$ and the vector $\mathcal L$. This needs the heights of the ten points $S_i$, $S_i+S_j$ ($i<j$), $B_\Sigma$ and $S_i+B_\Sigma$. Their $x$-coordinates are integral at all $21$ primes above $p$. The generators $g_R$ were obtained with \texttt{bnfisprincipal}, from the ideals $(\cO_F+x(R)\cO_F)^{-1}$; no integer factorisation is needed.

The results are as follows.
\begin{itemize}
\item Every entry of $\mathcal G$ and $\mathcal L$, for all seven characters, has valuation $\ge1$.
\item The matrix $\mathcal M$ has $v_p(\det\mathcal M)=3$, and $\mathcal M^{-1}$ has entries of valuation $\ge-1$.
\item As checks of Proposition~\ref{prop:global}, for the first basis character we have $h(2S_1)-4h(S_1)=O(p^{30})$ and $h(S_1+T)-h(S_1)=O(p^{39})$.
\end{itemize}

\subsection{Consistency check at the known points}
Theorem~\ref{thm:identity} predicts $\rho_\chi(P_i)=\rho_\chi(b)$. This is a genuine test of the whole construction, because the two sides are computed from different data. On one side are the local heights at $p$ of the points $\psi(P_i)-B_j^{(k)}$ and the values of $\ell$. On the other side are global heights of the lattice points $S_i$ and of $B_\Sigma$, which also involve the denominators $g_R$. For all seven basis characters and $i=2,3,4$ we found
\[
v_p\bigl(\rho_\chi(P_i)-\rho_\chi(b)\bigr)\ge30,
\]
that is, the identity holds to the working precision.

\subsection{Power series in the four discs}
Each of the four known points satisfies hypotheses (a)--(d) of Proposition~\ref{prop:analytic}, which we checked for all $21$ primes above $p$. We computed $\psi_k$ on the discs of $P_2,P_3,P_4$ with the chart of \S\ref{subsec:quotient}. All the chart denominators and constants are $p$-adic units or integral there.

On the disc of $b$, the chart has a pole at $b$ itself, since $\psi(b)=O$. There we computed $\psi_k(P)$ as a point with Laurent series coordinates and formed $\psi_k(P)-B_j^{(k)}$ with the group law. The coefficients of negative powers of $T$ in the result are $O(p^{25})$ or smaller, and were discarded; this is justified by the proof of Proposition~\ref{prop:analytic}, which shows that the exact result is a power series. The integrals $\int\psi_k^*\omega_k$ were computed by formal integration of $dx/(2y+a_1x+a_3)$.

Write $x=x_0+pT$ with $T\in\Z_p$, and
\[
\rho^{\rm an}_\chi(P)-\rho^{\rm an}_\chi(P_0)=\sum_{k\ge1}a_k(\chi)\,T^k .
\]
We computed $a_1,a_2,a_3$ for each basis character. As a further check, the constant term of $\rho^{\rm an}_\chi$ agrees with $\rho_\chi(b)$ to precision $p^{30}$ (disc of $P_2,P_3,P_4$) and $p^{34}$ (disc of $b$). For every basis character and every disc,
\[
v(a_1)=1,\quad v(a_2)=1,\quad v(a_3)=2 .
\]
So a single character only gives the Strassmann bound $2$. In each disc the vectors $(a_1(\chi^{(i)})/p \bmod p)_i$ and $(a_2(\chi^{(i)})/p\bmod p)_i$ are linearly independent over $\F_p$. Hence there is a combination $\chi_\ast=\lambda_1\chi^{(1)}+\chi^{(2)}$ with $v(a_2(\chi_\ast))\ge2$ and $v(a_1(\chi_\ast))=1$:

\begin{center}
\begin{tabular}{lcc}
\toprule
disc & $(\lambda_1,\lambda_2)$ & $v(a_1),v(a_2),v(a_3)$ for $\chi_\ast$ \\
\midrule
$b=[0:0:1]$ & $(4953,1)$ & $1,2,2$\\
$P_2=[1:1:1]$ & $(503,1)$ & $1,2,2$\\
$P_3=[2:0:1]$ & $(1202,1)$ & $1,2,2$\\
$P_4=[-1:0:1]$ & $(4177,1)$ & $1,2,2$\\
\bottomrule
\end{tabular}
\end{center}

\begin{lemma}[Tail bound]\label{lem:tail}
Let $\chi$ be a $\Q$-null character with $c\in\Z_p^{21}$. Then in each of the four discs the coefficients satisfy
\[
v(a_k)\ge k-1-2\lfloor\log_pk\rfloor\qquad\text{for all }k\ge1.
\]
\end{lemma}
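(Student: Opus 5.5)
The plan is to bound the coefficients of each piece of $\rho^{\rm an}_\chi$ from \eqref{eq:rhoan} separately, after the substitution $s=pT$. The key observation is that each piece is (a) a $\Z_p$-combination of logarithms of power series in $s$ with $\Z_p$-coefficients and unit constant term, or (b) a polynomial of degree $\le2$ in integrals of $\Z_p[[s]]\,ds$ forms, multiplied by matrices of controlled valuation. The bound $k-1-2\lfloor\log_pk\rfloor$ is what one gets for the square of a Coleman integral: $s^k/k$ becomes $p^kT^k/k$ with valuation $\ge k-\lfloor\log_pk\rfloor$, and the extra losses come from $\mathcal M^{-1}$ and from squaring.

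\emph{Step 1: logarithmic terms.} By the proof of Proposition~\ref{prop:analytic}, each local term $\lambda_{\fp_k}(\psi_k(P)-B^{(k)}_j)$ equals $\tfrac{c_k}{n^2}\log U(s)$ with $U\in\Z_p[[s]]^\times$, and $n=5728$ is a unit. By hypothesis (b), $\log\sigma_k(\ell)(x,y,1)$ has the same form. Write $U=u_0(1+V(s))$ with $V\in\Z_p[[s]]$ and $V(0)=u_0^{-1}U(0)-1$; the constant term is irrelevant to $a_k$ for $k\ge1$. Then
\[
\log U(s)-\log U(0)=\log\frac{U(s)}{U(0)}=\sum_{m\ge1}(-1)^{m-1}\frac{W(s)^m}{m},
\]
where $W\in s\Z_p[[s]]$. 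After $s=pT$, the term $W^m/m$ has $T^k$-coefficient divisible by $p^{\max(k,m)}/m$, hence of valuation $\ge k-\lfloor\log_pk\rfloor$ (use $m\le k$ when the monomial degree is $k$). Since $c\in\Z_p^{21}$, these terms contribute $v(a_k)\ge k-\lfloor\log_pk\rfloor$.

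\emph{Step 2: integrals and $n(P)$.} The integral $\int_{P_0}^P\psi_k^*\omega_k=\int_0^sf_k$ with $f_k\in\Z_p[[s]]$ has $T^k$-coefficient of valuation $\ge k-\lfloor\log_pk\rfloor$. By the computation reported above, $\mathcal M^{-1}$ has entries of valuation $\ge-1$, so $n(P)-n(P_0)$ has $T^k$-coefficients of valuation $\ge k-1-\lfloor\log_pk\rfloor$.

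\emph{Step 3: the linear term.} The entries of $\mathcal L$ have valuation $\ge1$ for basis characters. I would note that this extends to any $\chi$ with $c\in\Z_p^{21}$ by writing $\chi$ in the basis; this needs an integrality argument, or else restricting to the combinations $\chi_\ast$ actually used. Granting it, $2\mathcal L\,n$ gives $v\ge k-\lfloor\log_pk\rfloor$.

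\emph{Step 4: the quadratic term (main obstacle).} I expand
\[
n^T\mathcal G n=n_0^T\mathcal G n_0+2n_0^T\mathcal G\Delta+\Delta^T\mathcal G\Delta,\qquad \Delta=n(P)-n(P_0).
\]
The cross term behaves like Step 3. For $\Delta^T\mathcal G\Delta$, the $T^k$-coefficient is a sum over $i+j=k$ with $i,j\ge1$ of products with valuation at least
\[
1+(i-1-\lfloor\log_pi\rfloor)+(j-1-\lfloor\log_pj\rfloor)\ge k-1-2\lfloor\log_pk\rfloor.
\]
This is exactly the claimed bound, and it explains the factor $2$. The delicate points are:
\begin{enumerate}
\item justifying $v(\mathcal G_{ij})\ge1$ for general $\chi$ with integral $c$;
\item the disc of $b$, where $\psi_k$ has a pole. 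There the truncation of negative powers must be argued via the exact power-series statement of Proposition~\ref{prop:analytic}, and the $O(p^{25})$ errors handled separately as a precision issue rather than part of this lemma.
\end{enumerate}
Taking the minimum over the four contributions yields $v(a_k)\ge k-1-2\lfloor\log_pk\rfloor$.
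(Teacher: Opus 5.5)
Your proposal is correct and is essentially the paper's proof. You use the same decomposition of \eqref{eq:rhoan}: logarithms of unit power series, the integrals pushed through $\mathcal M^{-1}$ with valuation $\ge-1$, and then the terms weighted by $\mathcal G,\mathcal L$ with valuation $\ge1$. The only cosmetic difference is that you track $T$-coefficients and expand around $n(P_0)$, while the paper bounds the $s$-coefficients by $-1-2\lfloor\log_pk\rfloor$ and multiplies by $p^k$ at the end.

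Your first flagged point, which the paper leaves implicit, is settled by Lemma~\ref{lem:dim7}. The basis comes from Cramer's rule with a unit pivot minor and the seven free coordinates set to standard basis vectors. So any $\Q$-null $c\in\Z_p^{21}$ is the $\Z_p$-combination of the basis characters whose coefficients are its free coordinates, and $\mathcal G,\mathcal L$ are linear in $c$.
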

\begin{proof}
Write $\rho^{\rm an}_\chi-\rho^{\rm an}_\chi(P_0)=\sum b_ks^k$ with $s=x-x_0=pT$, so that $a_k=p^kb_k$. We bound the coefficients of each piece of \eqref{eq:rhoan}.
\begin{itemize}
\item \emph{Local terms.} By the proof of Proposition~\ref{prop:analytic}, each is $c_k/n^2$ times the logarithm of a unit power series $u_0(1+w(s))$ with $u_0\in\Z_p^\times$ and $w\in s\Z_p[[s]]$. In $\log(1+w)=\sum_{j\ge1}(-1)^{j+1}w^j/j$, the coefficient of $s^k$ only involves $j\le k$, so it has valuation $\ge-\lfloor\log_pk\rfloor$. Since $n=5728$ is prime to $p$ and $c\in\Z_p^{21}$, these terms contribute to $b_k$ with valuation $\ge-\lfloor\log_pk\rfloor$.
\item \emph{Integrals.} The integrals $\int_0^sf_k(s)\,ds$ with $f_k\in\Z_p[[s]]$ have $s^k$-coefficient of valuation $\ge-v_p(k)\ge-\lfloor\log_pk\rfloor$. Since $\mathcal M^{-1}$ has entries of valuation $\ge-1$ and $n(P_0)\in\Z^3$, the vector $n(P)$ has integral constant term and $s^k$-coefficients of valuation $\ge-1-\lfloor\log_pk\rfloor$ for $k\ge1$.
\item \emph{Quadratic and linear terms.} The entries of $\mathcal G$ and $\mathcal L$ have valuation $\ge1$. So the $s^k$-coefficients of $n^T\mathcal Gn$ and $\mathcal Ln$ have valuation $\ge1+2(-1-\lfloor\log_pk\rfloor)=-1-2\lfloor\log_pk\rfloor$.
\end{itemize}
So $v(b_k)\ge-1-2\lfloor\log_pk\rfloor$, and hence $v(a_k)\ge k-1-2\lfloor\log_pk\rfloor$.
\end{proof}

For $3\le k<p$ this gives $v(a_k)\ge k-1\ge2$. For $k\ge p$ it gives $v(a_k)\ge k-1-2\log_p k\ge 2$. So $v(a_k)\ge2$ for all $k\ge3$ and every $\Z_p$-combination of the basis characters. Only $k=1,2$ need the explicit computation; $a_3$ was computed as a check.

\begin{proposition}\label{prop:strassmann}
In each of the four discs, the only zero in $\Z_p$ of $\sum_{k\ge1}a_k(\chi_\ast)T^k$ is $T=0$.
\end{proposition}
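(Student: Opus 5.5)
The plan is a Strassmann-type Newton polygon argument applied to $f(T)=\sum_{k\ge1}a_k(\chi_\ast)T^k$, one disc at a time. The coefficients $a_k(\chi_\ast)$ are $\Z_p$-linear combinations of the coefficients for the basis characters, since $\chi_\ast=\lambda_1\chi^{(1)}+\chi^{(2)}$ with $\lambda_1\in\Z$. So $\chi_\ast$ has constants $c\in\Z_p^{21}$, and Lemma~\ref{lem:tail} applies to it. Together with the remark after that lemma, this gives $v(a_k(\chi_\ast))\ge2$ for all $k\ge3$. The table supplies the remaining data: $v(a_1(\chi_\ast))=1$ and $v(a_2(\chi_\ast))\ge2$ in each of the four discs. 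These valuations are computed to working precision $p^{30}$ or better, far above $2$, so they are reliable (Section~\ref{sec:precision}).

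Suppose $T_0\in\Z_p$ is a zero with $T_0\neq0$. First I would factor $f(T)=T\,g(T)$ with
\[
g(T)=a_1+\sum_{k\ge2}a_kT^{k-1}.
\]
For every $T\in\Z_p$ the terms with $k\ge2$ have valuation $\ge2$, while $v(a_1)=1$. Hence $v(g(T))=1$ and $g(T)\neq0$. Therefore $f(T_0)=T_0\,g(T_0)\ne0$, a contradiction, so $T=0$ is the only zero in $\Z_p$. Equivalently, after dividing by $p$ the reduced power series $\bar f(T)=\overline{a_1/p}\,T$ has degree $1$, and Strassmann's theorem bounds the number of zeros by $1$. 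Since $f(0)=0$ trivially, the unique zero is $T=0$.

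The only step that needs care is convergence and bookkeeping. The series $f$ converges on $\Z_p$ because $v(a_k)\to\infty$ by Lemma~\ref{lem:tail}. For the same reason $g$ converges and the ultrametric estimate above is legitimate. One must also check that the combination $\chi_\ast$ is a genuine $\Q$-null character with $\Z_p$-coefficients, which holds because the space of $\Q$-null characters is a vector space and the $\lambda_i$ are integers. I expect no real obstacle here. The substantive inputs are the computed valuations of $a_1,a_2$ and the tail bound, and both are already established.
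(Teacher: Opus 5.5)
Your proposal is correct and is essentially the paper's argument: both take $v(a_1(\chi_\ast))=1$ and $v(a_2(\chi_\ast))\ge2$ from the table, and $v(a_k)\ge2$ for $k\ge3$ from Lemma~\ref{lem:tail}, which applies because $\chi_\ast$ is a $\Z_p$-combination of the basis characters. The only difference is that you write out the factorisation $f=T\,g$ with $v(g(T))=1$ on $\Z_p$; this is exactly the $N=1$ case of Strassmann's theorem, which the paper simply cites.
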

\begin{proof}
By the table and Lemma~\ref{lem:tail}, $v(a_1)=1<v(a_k)$ for all $k\ge2$, and $v(a_k)\to\infty$. Strassmann's theorem \cite{Strassmann} (see \cite[\S4.5]{Cassels}) concerns a power series $\sum_{k\ge0}a_kT^k$ over $\Z_p$ with $a_k\to0$. Let $N$ be the largest index with $v(a_N)=\min_kv(a_k)$. Then the series has at most $N$ zeros in $\Z_p$. Our series has constant term $0$ and minimum coefficient valuation $1$, attained only at $k=1$. So it has at most one zero in $\Z_p$, and $T=0$ is a zero.
\end{proof}

\subsection{Precision}\label{sec:precision}
All $p$-adic computations were done with PARI's $p$-adic numbers, at working precision $p^{40}$. PARI tracks the precision of each $p$-adic number (and of each power series coefficient) through arithmetic operations.

The quantities on which the proof depends are the following:
\begin{itemize}
\item the valuations $v(a_1)=1$ and $v(a_2)\ge2$ for $\chi_\ast$;
\item the bounds $v(\mathcal G),v(\mathcal L)\ge1$ and $v(\mathcal M^{-1})\ge-1$ used in Lemma~\ref{lem:tail};
\item the unit minor of Lemma~\ref{lem:dim7}.
\end{itemize}
All of these are determined modulo $p^2$, while the attained precision is at least $p^{30}$ (and $p^{34}$ on the disc of $b$). As explained after Lemma~\ref{lem:dim7}, the use of approximate characters changes the $a_k$ by $O(p^{30})$ at most.

\section{Proof of the Main Theorem}\label{sec:proof}

\begin{theorem}\label{thm:main}
Assume $\operatorname{rk}J(\Q)\le3$. Then $C(\Q)=\{b,P_2,P_3,P_4\}$.
\end{theorem}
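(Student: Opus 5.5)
The plan is to assemble the results already established: the sieve confines rational points to four residue discs, and Strassmann's bound allows at most one rational point in each disc. First, the four points $b,P_2,P_3,P_4$ lie on $C$; this is a direct substitution into \eqref{eq:C}. So it suffices to show that $C(\Q)$ has at most these points. Let $P\in C(\Q)$. Under the assumption $\operatorname{rk}J(\Q)\le3$, Corollary~\ref{cor:inLambda} applies, so $\psi(P)$ has coordinates $n(P)\in\Z_{(2\cdot179)}^4$ as in \eqref{eq:npcoords}. Proposition~\ref{prop:sievep} then gives $\bar P=\bar P_0$ in $C(\F_p)$ for some $P_0\in\{b,P_2,P_3,P_4\}$, with $p=5581$. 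Hence $P$ lies in the residue disc $D$ of $P_0$.

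Next I will apply Proposition~\ref{prop:analytic} on $D$. Its hypotheses (a)--(d) were verified for each of the four known points in Section~\ref{sec:computation}. Take the $\Q$-null character $\chi_\ast$ from the table for this disc; it is a $\Q_p$-linear combination of basis characters with $\Z_p$-coefficients, so $c\in\Z_p^{21}$. By Theorem~\ref{thm:identity}, $\rho_{\chi_\ast}(P)=\rho_{\chi_\ast}(b)=\rho_{\chi_\ast}(P_0)$. By Proposition~\ref{prop:analytic}, both values equal the corresponding values of $\rho^{\rm an}_{\chi_\ast}$, since $P,P_0\in C(\Q)\cap D$.

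Writing $x(P)=x_0+pT$ with $T\in\Z_p$, this says that $T$ is a zero of $\sum_{k\ge1}a_k(\chi_\ast)T^k$. By Proposition~\ref{prop:strassmann}, this forces $T=0$, i.e.\ $x(P)=x_0$. Since $s=x-x_0$ is a parameter on $D$ by hypothesis (a), the point of $D$ with $s=0$ is unique, so $P=P_0$. One small point needs care: every point of $D$ is of the form $[x:y:1]$. This holds because $P_0$ has $z=1$, so any point reducing to $\bar P_0$ has $z$ a unit and can be normalised. Combining the four discs gives $C(\Q)=\{b,P_2,P_3,P_4\}$.

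The main obstacle is not this assembly but the reliability of the inputs it relies on. These are the sieve output, the valuations $v(a_1)=1$ and $v(a_2)\ge2$, and the tail bound in Lemma~\ref{lem:tail}, which together make Strassmann's bound equal $1$. I will point to Section~\ref{sec:precision} to justify that these valuations are determined modulo $p^2$ despite the approximate characters. I will also note that the rank hypothesis enters only through Corollary~\ref{cor:inLambda}, so the theorem depends on exactly the stated assumption.
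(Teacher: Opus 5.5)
Your proposal is correct and follows the paper's proof step for step: Corollary~\ref{cor:inLambda} from the rank bound, the sieves of Propositions~\ref{prop:sieve16} and~\ref{prop:sievep} to place $P$ in a residue disc modulo $5581$, Theorem~\ref{thm:identity} and Proposition~\ref{prop:analytic} to turn $\rho_{\chi_\ast}(P)=\rho_{\chi_\ast}(P_0)$ into a zero $T$ of $\sum_{k\ge1}a_k(\chi_\ast)T^k$, and Proposition~\ref{prop:strassmann} to force $T=0$. Your extra remarks make explicit two points the paper leaves implicit: points of the disc can be normalised to $z=1$, and by hypothesis~(a) the value $T=0$ determines $P=P_0$.
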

\begin{proof}
Let $P\in C(\Q)$.
\begin{enumerate}
\item By Corollary~\ref{cor:inLambda}, $m\psi(P)\in\Lambda$ for some integer $m\ge1$ prime to $2\cdot179$.
\item By Propositions~\ref{prop:sieve16} and~\ref{prop:sievep}, $P$ lies in the residue disc modulo $p=5581$ of one of the four known points $P_0\in\{b,P_2,P_3,P_4\}$.
\item By Theorem~\ref{thm:identity}, $\rho_{\chi}(P)=\rho_\chi(P_0)$ for every $\Q$-null $\chi$, in particular for $\chi_\ast$.
\item By Proposition~\ref{prop:analytic}, $\rho_{\chi_\ast}=\rho^{\rm an}_{\chi_\ast}$ at $P$ and at $P_0$. So the parameter $T$ of $P$ is a zero of $\sum_{k\ge1}a_k(\chi_\ast)T^k$.
\item By Proposition~\ref{prop:strassmann}, $T=0$, i.e.\ $P=P_0$. \qedhere
\end{enumerate}
\end{proof}

\begin{proof}[Proof of Corollary~\ref{cor:7adic}]
Theorem~\ref{thm:main} is \cite[Conjecture~1.6]{FL}. \cite[Theorem~1.7]{FL} states that, assuming this conjecture, one of (i), (ii), (iii') holds, where (iii') only involves the prime powers $5^2$, $11^2$ and primes $p\ge19$. For $p=7$ only (i) and (ii) remain. The statement about the two genus-$9$ curves is \cite[Corollary~3.7(2)]{FL}: under the conjecture, if the $7$-adic image is contained in one of these groups then $j(E)=0$, so $E$ has CM.
\end{proof}

\begin{proof}[Proof of Corollary~\ref{cor:fermat}]
Furio and Lombardo prove \cite[Theorem~3.5(1)]{FL} in \cite[\S6.1]{FL}, and state that under Conjecture~1.6 ``a very similar argument'' solves Equation~(2). We spell this out. Let $(a,b,c)$ be a $(\star)$-solution of $a^2+196b^3=27c^7$.

By the modular argument of \cite[\S\S4--5]{FL}, exactly as in their proof of Theorem~3.5 in \cite[\S6.1]{FL}, either $c=\pm1$, or $j(F_{(a,b,c)})=2^8\cdot7^2\cdot b^3/c^7$ is the $j$-invariant of a rational point on one of $X_{E_1}(7)$, $X_{E_4}(7)\cong X^-_{E_1}(7)$, $X_{E_2}(7)$ or $X_{E_3}(7)=C$. The rational points on the first three are known unconditionally \cite[Theorem~6.1]{FL}, and those on $C$ by Theorem~\ref{thm:main}. Their $j$-invariants are listed in \cite[\S6.1]{FL}.
\begin{itemize}
\item \emph{The first three curves.} Every $j$-invariant from $X_{E_1},X_{E_2},X_{E_4}$ has $7$-adic valuation $1$. On the other hand, $v_7(2^8\cdot7^2b^3/c^7)=2+3v_7(b)\equiv2\pmod 3$, because $\gcd(c,7)=1$. So none of these occurs.
\item \emph{The curve $C$.} The $j$-invariants of the four points of $C$ are
\[
2^2\cdot253447^3/67^7,\quad -2^{10}\cdot19^3\cdot67^3\cdot2131^3/317^7,\quad 2^{10}\cdot23^3/5^7,\quad 2^8\cdot7^2 .
\]
The first three have $7$-adic valuation $0$, which is not $\equiv2\pmod3$. The last one gives $b^3/c^7=1$ with $\gcd(b,c)=1$, so $(b,c)=\pm(1,1)$. Here $c=1$ would need $a^2=-169$, so $(b,c)=(-1,-1)$ and $a=\pm13$.
\item \emph{The case $c=\pm1$.} Furio and Lombardo compute the integral points on $a^2+196b^3=\pm27$ \cite[proof of Theorem~3.5, \S6.1]{FL}. The only resulting $(\star)$-solutions are $(\pm13,-1,-1)$ (see also \cite[Remark~3.6]{FL}).
\end{itemize}
This proves the corollary.
\end{proof}

\section{Remarks}\label{sec:remarks}

\subsection{How the computations were checked}\label{sec:checks}
The computations in the first version of this paper were redone from scratch in a second implementation, written independently of the first in PARI/GP~2.15.4. The second implementation differs from the first in several places:
\begin{itemize}
\item the involution was recognised by interpolation over all $21$ conjugates rather than by LLL;
\item the sieve works directly in $A_\fq/16A_\fq$, using a presentation of $A_\fq$ by generators and relations;
\item the local heights at $p$ are those of $E_m$ itself, via Lemma~\ref{lem:changemodel}, rather than of $E_0$;
\item on the disc of $b$ it uses Laurent series instead of a second chart.
\end{itemize}
It reproduced every computational claim that the proof uses:
\begin{itemize}
\item the invariants of $F$, including \texttt{bnfcertify};
\item the involution, verified exactly;
\item $j(E')$, the conductor and the torsion;
\item the divisibility of $R_2$ and $R_1+R_3$, and the height determinant $5749.6457\ldots$;
\item the saturation certificates at $2$ and $179$;
\item the sieve modulo $16$, and the sieve modulo $5581$ (only the four known pairs survive);
\item the dimension $7$;
\item the identity at the known points, to $p^{30}$;
\item the valuations of $a_1,a_2$ and the combinations $(4953,1)$, $(503,1)$, $(1202,1)$, $(4177,1)$ in the table above.
\end{itemize}
Both implementations were written with the help of an AI system and run in the same computer algebra system. An implementation in a different system (for example Magma or Sage) by human authors would further strengthen the result.

\subsection{Further checks}
\begin{enumerate}
\item \emph{Point search.} A search over all coprime $(x,z)$ with $\max(|x|,|z|)\le400$, solving for $y$, found no rational points other than the four known ones. The first version reports a search up to $20000$.
\item \emph{Consistency with \cite{FL}.} The divisibility relations of Proposition~\ref{prop:lattice}(2) are the images under $\psi_*$ of the relations in \cite[Remark~6.3]{FL}.
\item \emph{A second prime.} The first version reports that the height computation was repeated at the next suitable prime $p=57773$, with the same pattern of valuations. This was not repeated in the second implementation and is not used in the proof.
\end{enumerate}

\subsection{How general is the method?}
The ingredients were:
\begin{enumerate}[label=(\alph*)]
\item a number field $F$ over which the curve maps to an elliptic curve $E'$ (here the quotient by an involution);
\item a divisor on the curve, related to the ramification of the map, that is cut out by a form over $F$ (here a line), so that the good-place contributions reduce to the value of a form;
\item $\Q_p$-valued id\`ele class characters of $F$ that vanish on $\A_\Q^\times$ and at the bad places.
\end{enumerate}
The same approach should apply in other situations:
\begin{itemize}
\item other twists of $X(7)$ whose rank equals the genus;
\item other curves whose Jacobians acquire extra endomorphisms or extra N\'eron--Severi classes over a number field;
\item more generally, whenever a bielliptic structure appears over a number field.
\end{itemize}
It avoids the two expensive steps of quadratic Chabauty for non-hyperelliptic curves: computing the Hodge filtration and Frobenius structure on the unipotent connection \cite{BDMTV1,BDMTV2}, and computing local heights at bad primes \cite{BettsDogra}. The price is working over a larger field, but only through elliptic curves over $\Q_p$.

\subsection{Relation with Chabauty--Kim theory}
We expect that the functions $\rho_\chi$ are quadratic Chabauty functions for the ``Artin-twisted'' quotient of the fundamental group described in Remark~\ref{rem:whynontrivial}; we have not proved this. Coates and Kim \cite{CoatesKim} proved finiteness of the Chabauty--Kim set for curves with potentially CM Jacobians, at some depth. For this curve, our computation shows that an explicit depth-$2$ construction is effective.

\subsection*{Acknowledgements}
The author has been working on \cite[Conjecture~1.6]{FL}  alone since June 2026 without the help of any other. The author was finally able to find the solution to \cite[Conjecture~1.6]{FL} using Claude Opus 5.5 (Anthropic). After that, the author learned that Eray Karabiyik also has been working on the same problem.  Eray Karabiyik  also claims to prove \cite[Conjecture~1.6]{FL} without using AI.  Eray Karabiyik also communicates to the author that he does not have any written version of his proof  of \cite[Conjecture~1.6]{FL}. All the proving strategy and details in the paper were developed using Claude Opus 5.5 (Anthropic). 
\appendix
\section{Data}\label{app:data}

\subsection{The sieve modulo $16$}
For each $q\in Q_{\rm sieve}$ the table lists the following, from the second implementation:
\begin{itemize}
\item $\#C(\F_q)$;
\item for each prime $\fq\mid q$ that was used, its residue degree $f$ and the invariants $(e_1,e_2)$ of $A_\fq/16A_\fq\cong\Z/e_1\times\Z/e_2$, written $(f;e_1,e_2)$ with multiplicities as exponents;
\item the number of primes $\fq\mid q$ skipped because some chart constant is not integral there;
\item the number of pairs $(\bar P,\fq)$ at which the chart is undefined;
\item the number of surviving classes after $q$.
\end{itemize}
Which primes are usable depends on the chart. So this table differs in inessential ways from the one in the first version, while the final list of survivors is the same.

\begin{center}\small
\begin{longtable}{rrlrrr}
\toprule
$q$ & $\#C(\F_q)$ & used $\fq$: $(f;e_1,e_2)$ & skipped & undefined & survivors\\
\midrule
3 & 4 & $(4;2,2)$, $(8;16,16)$$^{2}$ & 1 & 0 & 2048\\
5 & 6 & $(6;2,2)$$^{3}$ & 1 & 0 & 1024\\
11 & 21 & $(7;4,2)$$^{3}$ & 0 & 0 & 1024\\
13 & 14 & $(4;2,2)$, $(8;16,16)$$^{2}$ & 1 & 0 & 1024\\
17 & 18 & $(1;2,1)$, $(4;2,2)$, $(8;16,16)$$^{2}$ & 0 & 3 & 1024\\
19 & 20 & $(3;4,1)$, $(6;4,4)$$^{3}$ & 0 & 0 & 1024\\
23 & 32 & $(1;16,2)$$^{4}$, $(2;16,16)$$^{7}$ & 2 & 13 & 5\\
29 & 45 & $(7;2,2)$$^{3}$ & 0 & 0 & 5\\
31 & 32 & $(1;16,1)$, $(4;2,2)$, $(8;16,16)$$^{2}$ & 0 & 0 & 5\\
37 & 38 & $(3;2,2)$$^{7}$ & 0 & 0 & 5\\
41 & 42 & $(3;2,1)$, $(6;2,2)$$^{3}$ & 0 & 0 & 5\\
43 & 56 & $(1;4,2)$, $(2;2,2)$$^{2}$, $(4;16,16)$$^{4}$ & 0 & 2 & 5\\
47 & 48 & $(1;16,1)$, $(4;2,2)$, $(8;16,16)$$^{2}$ & 0 & 2 & 5\\
53 & 64 & $(1;16,4)$, $(2;2,2)$$^{2}$, $(4;16,16)$$^{4}$ & 0 & 4 & 4\\
59 & 60 & $(1;4,1)$, $(4;2,2)$, $(8;16,16)$$^{2}$ & 0 & 0 & 4\\
61 & 62 & $(1;2,1)$, $(4;2,2)$, $(8;16,16)$$^{2}$ & 0 & 4 & 4\\
67 & 68 & $(3;16,2)$$^{7}$ & 0 & 0 & 4\\
71 & 88 & $(1;4,2)$$^{5}$, $(2;8,8)$$^{8}$ & 0 & 8 & 4\\
73 & 74 & $(1;2,1)$$^{3}$, $(2;2,2)$$^{9}$ & 0 & 4 & 4\\
79 & 105 & $(7;4,2)$$^{3}$ & 0 & 0 & 4\\
83 & 84 & $(3;4,1)$, $(6;4,4)$$^{3}$ & 0 & 0 & 4\\
89 & 90 & $(1;2,1)$$^{3}$, $(2;2,2)$$^{9}$ & 0 & 2 & 4\\
97 & 98 & $(3;2,1)$, $(6;2,2)$$^{3}$ & 0 & 0 & 4\\
101 & 102 & $(3;2,1)$, $(6;2,2)$$^{3}$ & 0 & 0 & 4\\
103 & 104 & $(3;8,1)$, $(6;8,8)$$^{3}$ & 0 & 0 & 4\\
107 & 128 & $(1;16,2)$, $(2;2,2)$$^{2}$, $(4;16,16)$$^{4}$ & 0 & 2 & 4\\
109 & 110 & $(3;2,2)$$^{7}$ & 0 & 0 & 4\\
113 & 112 & $(1;4,4)$, $(2;2,2)$$^{2}$, $(4;16,16)$$^{4}$ & 0 & 0 & 4\\
\bottomrule
\end{longtable}
\end{center}

\subsection{Other data}
\begin{itemize}
\item $j(E_3)=12544=2^8\cdot7^2$; $E_3$ = \texttt{392.c1}.
\item $E'$ has $j=-3375$. A minimal model is only determined up to changes of coordinates with unit $u$, and its coefficients depend on the choice of integral basis. So we do not list coefficients here; the accompanying files contain the model used.
\item The complex height regulator of $\langle R_1,R_2,R_3\rangle$ is $\approx5749.646$. That of $\langle S_1,S_2,S_3\rangle$ is $\approx359.353=5749.646/16$, as it should be for a subgroup of index $4$.
\item $\#C(\F_{5581})=6020$. At every prime above $5581$, $E'(\F_{5581})\cong\Z/1432\times\Z/4$.
\item The totally split primes of $F$ below $20000$ are $5581$, $15107$ and $17539$. Of these, $W$ splits completely at every prime above only for $5581$.
\end{itemize}

\section{The computer calculations}\label{app:code}
All computations were done in PARI/GP 2.15.4. The accompanying archive of the second implementation contains the following scripts, in the order in which they are run.
\begin{enumerate}
\item \texttt{t1.gp}, \texttt{t2.gp}: the field $F$, the ramification check of $j_\ns$, \texttt{bnfinit} and \texttt{bnfcertify}.
\item \texttt{smooth.gp}, \texttt{smooth2.gp}: Lemma~\ref{lem:smooth}.
\item \texttt{inv.gp}, \texttt{inv2.gp}, \texttt{inv4.gp}: numerical search for the involutions, exact recognition and the exact check $G\circ\iota=G$, $\iota^2=\mathrm{id}$.
\item \texttt{quot.gp}: the quotient $E'$, the Weierstrass model, the minimal model, the conductor, the torsion, the points $R_i,S_i$ and the height determinant.
\item \texttt{load.gp}, \texttt{sieve.gp}, \texttt{sieve\_main\_body.gp}: the sieve modulo $16$; \texttt{psieve\_body.gp}: the sieve modulo $5581$. (Scripts with suffix \texttt{\_body} are concatenated after \texttt{load.gp} into the \texttt{run\_*.gp} files, which are included.)
\item \texttt{sat*.gp}, \texttt{misc\_main.gp}: saturation certificates, the torsion certificate, the totally split primes, the rank of the matrix modulo $179$.
\item \texttt{ellgen.gp}, \texttt{padic1\_body.gp}, \texttt{padic2\_body.gp}, \texttt{padic3\_body.gp}: sigma functions, local heights, characters, $B_\Sigma$, global heights and the identity at the known points.
\item \texttt{disc.gp}: the power series in the four discs and the Strassmann data.
\item \texttt{search.gp}: the point search.
\end{enumerate}


\end{document}